\documentclass[twoside,12pt]{amsart}
\usepackage[T1]{fontenc}
\usepackage{amsfonts}
\usepackage{amssymb}
\usepackage{amsthm}
\usepackage{amsmath}
\usepackage{mathrsfs} 
\usepackage{color}
\usepackage{graphicx}
\usepackage{caption}
\usepackage{float}
\usepackage{url}
\usepackage[all]{xy}
\usepackage{lscape}
\theoremstyle{plain}
\usepackage{array}						
\usepackage{mathtools}
\usepackage[vcentermath]{youngtab}
\usepackage[margin=3.3cm]{geometry}

\setlength{\parindent}{0.5cm}


\newcommand{\fS}{\mathfrak{S}}

\newcommand{\fl}{\mathfrak{l}}

\newcommand{\fs}{\mathfrak{s}}


\newcommand{\sL}{\mathscr{L}}

\newcommand{\sQ}{\mathscr{Q}}

\newcommand{\sS}{\mathscr{S}}

\newcommand{\cA}{\mathcal{A}}

\newcommand{\cF}{\mathcal{F}}

\newcommand{\cH}{\mathcal{H}}

\newcommand{\cO}{\mathcal{O}}

\newcommand{\Z}{\mathbb{Z}}

\newcommand{\Q}{\mathbb{Q}}
\newcommand{\C}{\mathbb{C}}


\newcommand{\bc}{\mathbf{c}}

\newcommand{\br}{\mathbf{r}}
\newcommand{\bs}{\mathbf{s}}
\newcommand{\bt}{\mathbf{t}}

\newcommand{\al}{\alpha}
\newcommand{\be}{\beta}
\newcommand{\si}{\sigma}
\newcommand{\la}{\lambda}
\newcommand{\ga}{\gamma}

\newcommand{\ka}{\kappa}

\newcommand{\de}{\delta}


\newcommand{\La}{\Lambda}

\newcommand{\De}{\Delta}

\newcommand{\bla}{\boldsymbol{\la}}
\newcommand{\bnu}{\boldsymbol{\nu}}
\newcommand{\bmu}{\boldsymbol{\mu}}

\newcommand{\ta}{\tilde{a}}
\newcommand{\tb}{\tilde{b}}

\newcommand{\te}{\tilde{e}}
\newcommand{\tf}{\tilde{f}}

\newcommand{\ts}{\tilde{s}}

\newcommand{\tF}{\tilde{F}}

\newcommand{\dw}{\dot{w}}
\newcommand{\dA}{\dot{A}}
\newcommand{\dS}{\dot{S}}
\newcommand{\ds}{\dot{s}}
\newcommand{\dB}{\dot{B}}
\newcommand{\dL}{\dot{L}}
\newcommand{\dN}{\dot{N}}
\newcommand{\dM}{\dot{M}}
\newcommand{\dm}{\dot{m}}
\newcommand{\dT}{\dot{T}}
\newcommand{\dtau}{\dot{\tau}}
\newcommand{\dka}{\dot{\ka}}
\newcommand{\dvarphi}{\dot{\varphi}}
\newcommand{\dPsi}{\dot{\Psi}}

\newcommand{\dsS}{\dot{\sS}}

\newcommand{\dbs}{\dot{\bs}}

\newcommand{\dbt}{\dot{\bt}}
\newcommand{\dbla}{\dot{\bla}}

\newcommand{\dbnu}{\dot{\bnu}}

\newcommand{\tde}{\tilde{\dot{e}}}
\newcommand{\tdf}{\tilde{\dot{f}}}

\newcommand{\tdF}{\tilde{\dot{F}}}

\newcommand{\tbla}{\tilde{\bla}}

\newcommand{\tbs}{\tilde{\bs}}

\newcommand{\dtbs}{\dot{\tbs}}

\newcommand{\hw}{\widehat{w}}
\newcommand{\hv}{\widehat{v}}
\newcommand{\hP}{\widehat{P}}


\newcommand{\lra}{\longrightarrow}
\newcommand{\Ra}{\Rightarrow}

\newcommand{\eq}{\Leftrightarrow}

\newcommand{\Ue}{\mathcal{U}'_q (\widehat{\mathfrak{sl}_e})}
\newcommand{\Ul}{\mathcal{U}'_p (\widehat{\mathfrak{sl}_l})}
\newcommand{\Uinf}{\mathcal{U}_q ({\mathfrak{sl}_\infty})}

\newcommand{\sle}{\widehat{\mathfrak{sl}_e}}
\newcommand{\sll}{\widehat{\mathfrak{sl}_l}}

\newcommand{\dispst}{\displaystyle}
\newcommand{\mand}{\quad\text{and}\quad}
\newcommand{\Irr}{\text{\rm Irr}}

\newcommand{\id}{\text{\rm Id}}

\newcommand{\bemp}{\boldsymbol{\emptyset}}
\newcommand{\dbemp}{\dot{\bemp}}

\newcommand{\res}{\text{\rm res}}
\newcommand{\cont}{\text{\rm cont}}

\newtheorem{num}{Notation}[section]
\newtheorem{defi}[num]{Definition}
\newtheorem*{defi*}{Definition}

\newtheorem{thm}[num]{Theorem}
\newtheorem*{thm*}{Theorem}
\newtheorem{lem}[num]{Lemma}
\newtheorem*{lem*}{Lemma}
\newtheorem{prop}[num]{Proposition}
\newtheorem*{prop*}{Proposition}
\newtheorem{cor}[num]{Corollary}
\newtheorem*{cor*}{Corollary}

\newtheorem*{conj*}{Conjecture}
\newtheorem*{rem*}{Remark}

\pagenumbering{arabic}

\theoremstyle{remark}
\newtheorem{exa}[num]{Example}
\newtheorem{rem}[num]{Remark}

\raggedbottom

\begin{document}
\title{Triple crystal action in Fock spaces}
\author{Thomas Gerber}


\address{Lehrstuhl D f\"ur Mathematik, RWTH Aachen University,
52062 Aachen, Germany}

\email{gerber@math.rwth-aachen.de}

\subjclass[2000]{17B37, 05E10, 20C08}
\keywords{affine quantum group, Fock space, crystal graph, Heisenberg algebra, 
Cherednik algebra, combinatorics}

\begin{abstract}
We make explicit a triple crystal structure on higher level Fock spaces,
by investigating at the combinatorial level the actions of two affine quantum groups and of a Heisenberg algebra.
To this end, we first determine a new indexation of the basis elements that makes the two quantum group crystals commute.
Then, we define a so-called Heisenberg crystal, commuting with the other two.
This gives new information about the representation theory of 
cyclotomic rational Cherednik algebras, relying on some recent results of Shan and Vasserot and of Losev.
In particular, we give an explicit labelling of their finite-dimensional simple modules.
\end{abstract}

\maketitle

\tableofcontents

\markright{TRIPLE CRYSTAL ACTION IN FOCK SPACES}

\section{Introduction}

Since Ariki's proof \cite{Ariki1996} of the LLT conjecture \cite{LLT1996},
it is understood that higher level Fock spaces representations of $\Ue$
play, via categorification, a very important role in understanding
some classical structures related to complex reflection groups.
More precisely, if $\cF_{\bs,e}$ is the level $l$ Fock space representation of $\Ue$ with multicharge $\bs$
and $V(\bs)$ the irreducible highest weight submodule of $\cF_{\bs,e}$ of weight $\La_{\bs}$
(determined by $\bs$),
then one can compute the decomposition numbers for the corresponding Ariki-Koike algebra
by specialising at $q=1$ Kashiwara's canonical basis of $V(\bs)$.

The Fock space itself is no longer irreducible, but one can however define a canonical basis for it,
which turns out to give, at $q=1$, the decomposition numbers of a corresponding $q$-Schur algebra, as
was proved by Varagnolo and Vasserot \cite{VaragnoloVasserot1999}, hence generalising Ariki's result.

The introduction of quiver Hecke algebras by Rouquier \cite{Rouquier2008a}
and by Khovanov and Lauda \cite{KhovanovLauda2008} has shed some new light about the role of the parameter $q$.
In fact, quiver Hecke algebras are graded, and graded versions of these 
results (which do not require to specialise $q$ at $1$) hold for these structures, 
see \cite{BrundanKleshchev2009}.

Moreover, Ariki's categorification theorem also permits to interpret the Kashiwara crystal
of $V(\bs)$ as the branching rule for the associated Ariki-Koike algebra \cite{Ariki2007}.
Shan has proved in \cite{Shan2011} that the crystal of the whole Fock space
is also categorified by a branching rule, but for another structure, namely a
corresponding cyclotomic rational Cherednik algebra.

Very recently, Dudas, Varagnolo and Vasserot \cite{DudasVaragnoloVasserot2015} have proved
a similar result suggested by Gerber, Hiss and Jacon \cite{GerberHissJacon2015}
in the context of finite unitary groups.
In this case, there is a notion of parabolic (or Harish-Chandra) induction for unipotent representations
which, provided one works with the appropriate Levi subgroups,
defines a branching graph which also categorifies the crystal of $\cF_{\bs,e}$.

\medskip

Therefore, the study of $\cF_{\bs,e}$, and in particular of
its crystal structure, which yields the theory of canonical bases in Kashiwara's approach \cite{Kashiwara1993},
is crucial for approaching fundamental problems in the representation theory of many
classical algebraic structures.

In Uglov's paper \cite{Uglov1999}, canonical bases of higher level Fock spaces have been
thoroughly studied, generalising Leclerc and Thibon's level one results in \cite{LeclercThibon1996} and
\cite{LeclercThibon2001}.
In his work, the Fock space is identified with a subspace of the level one Fock space $\La^s$.
This space $\La^s$ is essentially the direct sum of all Fock spaces $\cF_{\bs,e}$
over all $l$-charges $\bs$ whose components sum up to $s$.
Three algebras act on $\La^s$:
two quantum groups, namely $\Ue$ and $\Ul$, where $p=-1/q$, and a Heisenberg algebra $\cH$.
A fundamental result is that these three actions are pairwise commutative \cite[Proposition 4.6]{Uglov1999}, 
and that the Fock space can be decomposed in a very simple way:
it suffices to act on the empty multipartition by the three algebras for some
restricted values of the multicharge to reach any vector \cite[Theorem 4.8]{Uglov1999}.

This emphasizes the relevance of considering $\La^s$ not only as a $\Ue$-module, but also
as a $\Ul$-module and as a $\cH$-module.
This triple module structure is well-defined because one has a three natural ways to index
the elements of $\La^s$: either by partitions, by $l$-partitions, or by $e$-partitions.
The action of $\Ul$ is understood provided the correspondence between $l$-partitions and $e$-partitions
is known, which is the case (it is explicit and based on taking $e$-quotients and ``modified'' $l$-quotients).
This double quantum group structure is referred to as ``level-rank'' duality, and
has been investigated in particular in the works of Rouquier, Shan, Varagnolo and Vasserot \cite{RSVV2016} and Webster \cite{Webster2013a}
in the context of rational Cherednik algebras.

The action of $\cH$ has been less studied, but has recently been proved to have important applications
when it comes to the representation theory of rational Cherednik algebras.
More precisely, Shan and Vasserot \cite{ShanVasserot2012} have categorified the Heisenberg action on the Fock space, and used it
to characterise finite-dimensional simple modules for the corresponding cyclotomic Cherednik algebra.
In a recent preprint \cite{Losev2015}, Losev has given a combinatorial interpretation of this categorical action,
but without using Uglov's approach to the Fock space.
Finally, in the context of finite classical groups,
it has been shown that $\cH$ plays a role in the study of unipotent modular representations,
by relating the notion of weak cuspidality to the classical one, see \cite[Section 5]{DudasVaragnoloVasserot2016}.

However, Uglov's work says very little about crystals, and it is not clear 
how level-rank duality nor the action of $\cH$ is expressed at the crystal level.

\medskip

The aim of this paper two-fold.
First, complete Uglov's study of higher level Fock spaces at the crystal level.
This is achieved by explicitely determining a triple crystal structure which yields a nice
decomposition of the whole crystal in the spirit of \cite[Proposition 4.6]{Uglov1999} and
\cite[Theorem 4.8]{Uglov1999}.
This requires to make explicit the $\Ul$-crystal structure commuting with the $\Ue$-crystal,
and to define an appropriate notion of \textit{Heisenberg crystal} which shall commute
with both affine quantum group crystals.
Second, place it into the context of the representation theory of Cherednik algebras to deduce new results
using explicit combinatorics.
In order to do this, we must in particular prove a compatibily with a recent result of Losev \cite{Losev2015}.
Throughout the paper, we will give a significant amount of examples to illustrate the notions
and procedures that we introduce.

This article has the following structure.
In Section 2, we recall the important combinatorial notions used in all the paper,
in particular Uglov's algorithms which permit to juggle the different indexations of
the level one Fock space.
This section does not contain any new material, however we take some time to
reintroduce all notions carefully.

In Section 3, we recall the $\Ue$-module structure on higher level Fock space
which was explicited in \cite{JMMO1991}.
Note that this requires an order on $i$-boxes of multipartitions which gives the so-called
``Uglov'' realisation of the Fock space, which is not consensual in the literature.
We also introduce the conjugation procedure, which is capital,
and the correspondence (\ref{indexation}).
Essentially, it enables to put a new $\Ue$- and $\Ul$-structure on the Fock space,
which is the appropriate one for our purpose of studying crystals.

Section 4 gives a crystal version of level-rank duality.
The $\Ul$-crystal graph rule commuting
with the classic $\Ue$-crystal is explicited (Section \ref{comm_crystals}).
The crystal operators of $\Ul$ therefore give
$\Ue$-crystal isomorphisms in level $l$ Fock spaces,
adding to the list in \cite{Gerber2015}.
It is explicit and easy to describe on $l$-partitions.
The use of Correspondence (\ref{indexation}) is indispensable.

In Section 5, we study ``doubly highest weight vertices'',
that is to say, vertices that are simultaneously highest weight vertices
in the $\Ue$-crystal and in the $\Ul$-crystal.
We use a result of Jacon and Lecouvey \cite[Theorem 5.9]{JaconLecouvey2012}
to characterise these multipartitions, and then give some
essential properties.
The proofs there are quite technical and require 
a careful analysis of Correspondence (\ref{indexation}).

Section 6 is devoted to defining the Heisenberg crystal.
We start by introducing maps $\tb_{-\ka}$ and $\tb_\si$
which shift periods to the left and to the right respectively
in the abacus representation of a multipartition.
These are simultaneously $\Ue$- and $\Ul$-crystal isomorphisms.
The Heisenberg crystal is then defined as a graph where the arrows
are given by the action of ``Heisenberg operators'' which are refined versions of
$\tb_{-\ka}$ and $\tb_\si$.
We end with a decomposition theorem (Theorem \ref{thm_crys_decomp}) which is
an analogue of \cite[Theorem 4.8]{Uglov1999}.

Finally, Section 7 relates the various results of the previous sections
to the representation theory of rational Cherednik algebras.
We first give a general interpretation of the crystal level-rank duality.
Then, we show that Losev's independent results on the crystal version of the Heisenberg action
\cite{Losev2015} are compatible with those of Section 6, this is Theorem \ref{thmosev2}.
This enables us to give an explicit characterisation of the finite-dimensional simple
modules for the cyclotomic rational Cherednik algebras using the notion of FLOTW multipartitions
(Theorem \ref{thm_cher}).

\section{General combinatorics}\label{gen_comb}

\subsection{Charged multipartitions}\label{charged_mp}\

Let $l$ be a positive integer.
An \textit{$l$-charge} (or simply multicharge) is an $l$-tuple $\bs=(s_1,\dots,s_l)$ of integers.
An \textit{$l$-partition} (or simply multipartition) is an $l$-tuple $\bla=(\la^1,\dots,\la^l)$ of partitions.
One considers that a partition has an infinite number of size zero parts.
The set of partition will be denoted by $\Pi$ and the set of $l$-partitions by $\Pi_l$.
The rank of $\bla$ is the sum of the ranks of the partitions $\la^j$, $j=1,\dots,l$.
A charged $l$-partition is the data of an $l$-charge $\bs$ and an $l$-partition $\bla$, denoted $|\bla,\bs\rangle$.
It can be represented by an $l$-tuple of Young diagrams (corresponding to $\bla$),
whose boxes $(a,b,j)$ (where $a$ is the row of the box, $b$ is its column and $j$ its component)
are filled with the integers $b-a+s_j$.
This integer is called the \textit{content} of the box $(a,b,j)$.

\newcommand{\moinsun}{$-$1}
\newcommand{\moinsdeux}{$-$2}

\begin{exa}\label{exa_chargedmp}
Take $l=2$, $\bs=(-1,2)$ and $\bla=(2.1,1^2)$.
Then
$$
|\bla,\bs\rangle = (\; \young(\moinsun0,\moinsdeux)\; , \; \young(2,1) \;)
$$
\end{exa}

\subsection{Abaci}\label{abaci}\

Equivalently, one can use $\Z$-graded abaci to represent a charged multipartition, following \cite{JamesKerber1984}.
Given a charged $l$-partition $|\bla,\bs\rangle$, we can compute, for each $j=1\,\dots,l$,
the numbers $\be_k^j= \la_k^j + s_j -k +1$ for $k\geq0$ where the $\la_k^j$ ($k\geq 1$) are the parts of $\la^j$.
For each $j$, this give a set of $\be$-numbers for $\la_j$ in the sense of \cite{JamesKerber1984},
which is infinite by the convention that $\la^j$ has an infinite number of size zero parts.
Note that these $\be$-numbers are precisely the ``virtual'' contents
that appear just to the right of the border of $|\bla,\bs\rangle$ in its Young diagram representation.

Formally, we define the abacus $\cA(\bla,\bs)$ to be the subset of $\{1,\dots,l\}\times\Z$
defined by
$$\cA(\bla,\bs) = \left\{ (j,\be_k^j) \; , \; j\in\{1,\dots,l\}, k\geq0 \right\}.$$
We can depict $\cA(\bla,\bs)$ by drawing $l$ horizontal $\Z$-graded rows, 
numbered from bottom to top, and by drawing a bead on row $j$ at position $\be_k^j$ for all $j=1,\dots,l$
and for all $k\geq0$.

\begin{exa}
Take the same values as Example \ref{exa_chargedmp}.
Then the $\be$-numbers are given by
\begin{equation*}
\begin{array}{ccl}
j=2: & \hspace{2cm} & (\dots, -5, -4, -3, -2, -1, 0 , 2, 3)\\
j=1: & \hspace{2cm} & (\dots, -5, -4, -3, -1, 1) 
\end{array}
\end{equation*}
and we get the following abacus
\begin{figure}[H] 
\includegraphics{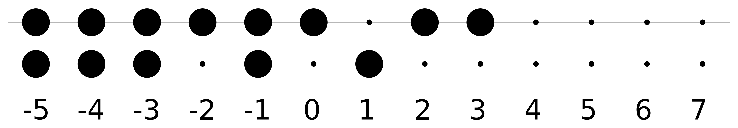}
\label{ab15}
\end{figure}
\end{exa}

From this abacus, one recovers 
the $l$-charge by shifting all beads to the left and by looking at the position of the rightmost bead on each row;
and the partition $\la^j$ (for all $j=1\dots,l$) by counting the number of empty spots to the left of each bead
on the $j$-th row.

\subsection{Uglov's algorithms}\label{uglovs_bij}\

In this section, we want $l\geq 2$ and we fix another integer $e\geq2$.
Following Uglov \cite{Uglov1999}, we explain a way to 
associate to a charged $l$-partition a charged $1$-partition, as well as a charged $e$-partition.

Consider the $l$-abacus representing a charged $l$-partition $|\bla,\bs\rangle$.
Divide it into rectangles $R_k$, with $k\in\Z$, of size $e\times l$ such that each rectangle contains the 
positions $(j,(k-1)e+1), (j,(k-1)e+2),\dots, (j,ke)$ for some $k\in\Z$ and for all $j\in\{1,\dots,l\}$.
Then for $(j,c)\in R_k$, set $\tau^{-1}(j,c) =(1, c-e(j-1)+elk)$.
Then one can show (see \cite{Uglov1999}) that $\tau^{-1}$ is a bijection between $\{1,\dots,l\}\times\Z$ and $1\times\Z$,
and we denote $\tau$ its inverse (whose formula can also be explicited).
In fact, $\tau^{-1}(\cA(\bla,\bs))$ is a $1$-abacus representing a charged partition, which we denote
$|\la,s\rangle$. It is easy to see that $s=\sum_{j=i}^ls_j$.

Starting from $|\la,s\rangle$, we can define a variant of $\tau$ which uses $e$.
For $(1,c)\in\cA(\la,s)$, set $\dtau(1,c)=((-c)\mod e+1 , \left\lceil \frac{c}{e} \right\rceil)$.
Then $\dtau$ is also bijection between $1\times\Z$ and $\{1,\dots,e\}\times\Z$.
In fact, $\dtau(\cA(\la,s))$ is an $e$-abacus representing a charged $e$-partition, which we denote
$|\dbla,\dbs\rangle$, and we also have $s=\sum_{i=1}^e s_i$ if $\dbs=(\ds_1,\dots,\ds_e)$.

\begin{exa}
We illustrate these procedures on Example \ref{exa_chargedmp}.
Take for instance $e=3$.
We see in Figure \ref{ab16} that $\tau$ ``stacks horizontally'' the elements of $\cA(\la,s)$ into
rectangles of height $l$ and width $e$, and that 
$\dtau$ ``stacks vertically'' the elements of $\cA(\la,s)$ into
rectangles of height $e$ and width $l$.
Notice also that the composition $\dtau \circ \tau^{-1}$ consists in
flipping each rectangle through the diagonal joining the top left corner to 
the bottom right corner, and then ```regluing'' the rectangles to get an abacus.
\begin{figure}[H] 
\includegraphics{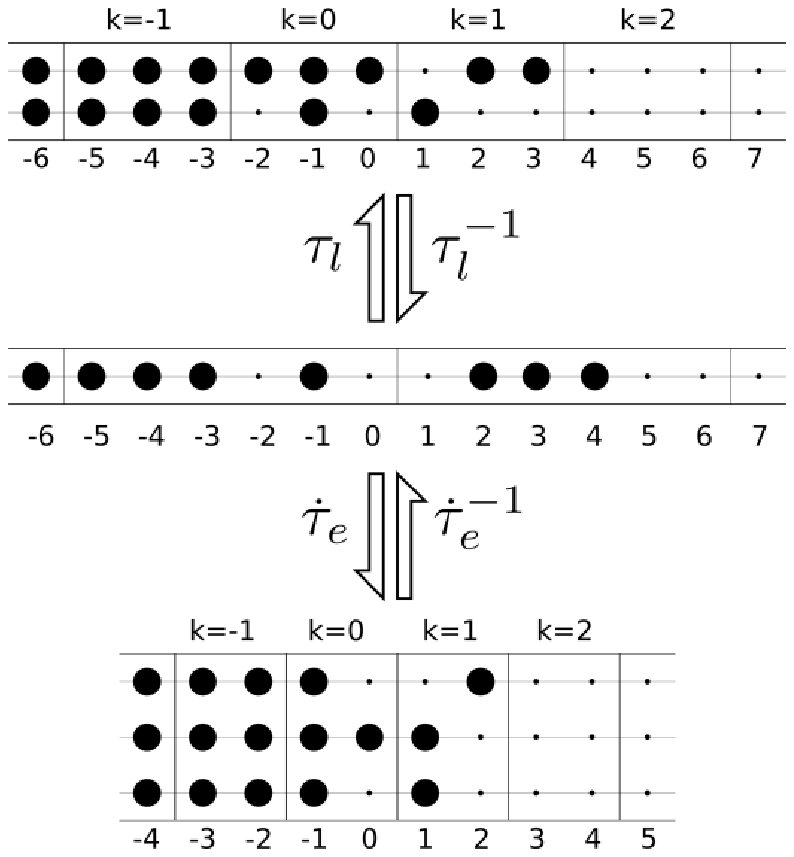}
\caption{The bijections $\tau$ and $\dtau$}
\label{ab16}
\end{figure}
\end{exa}

\begin{rem}\label{rem_ribbon}
Notice that shifting a bead one step to the left in $|\dbla,\dbs\rangle$ amounts to removing an $e$-ribbon in $|\la,s\rangle$.
In fact, the bijection $\dtau$ gives the \textit{$e$-quotient} (in the sense of \cite{JamesKerber1984})
of the partition $|\la,s\rangle$.
The \textit{$e$-core} of $\la$ is obtained after shifting all beads of $|\dbla,\dbs\rangle$ to the left
and computing the associated partition using $\dtau^{-1}$.
Note finally that this does not hold for $\tau$.
\end{rem}

\subsection{Addable/removable boxes, residues and order on boxes}\label{add_rem_boxes} \

We keep the notations of the previous section.
Recall that the content of a box $\ga=(a,b,j)$ of a multipartition $|\bla,\bs\rangle$
is the integer $\cont(\ga) = b-a+s_j$.
The \textit{residue} of $\ga$ is the integer $$\res(\ga) = \cont(\ga) \mod e.$$
For $i\in\{0,\dots,e-1\}$, $\ga$ is called an $i$-box if $\res(\ga)=i$.

A box $\ga$ is called \textit{removable} for $\bla$ if
$\ga$ is a box of $\bla$ and if $\bla \backslash \{\ga\}
$ is still a multipartition.
Similarly, it is called \textit{addable} if $\bla \cup \{\ga\}$ is still a multipartition.
In the abacus, this corresponds to a bead which can be shifted one step to the left (respectively to the right).
As seen in Remark \ref{rem_ribbon},
removing (respectively adding) a box in $|\dbla,\dbs\rangle$ 
corresponds to removing (respectively adding) an $e$-ribbon in $|\la,s\rangle=\dtau^{-1}(|\dbla,\dbs\rangle)$.

For a charged $l$-partition $|\bla,\bs\rangle$ and $i\in\{0,\dots,e-1\}$, 
there is a total order on the set of its removable and addable $i$-boxes defined by
\begin{equation}
\ga < \ga' \eq \left\{
\begin{array}{l}
\cont(\ga) < \cont(\ga') \text{\quad or} \\
\cont(\ga) = \cont(\ga')\mand j>j'
\end{array}
\right.
\end{equation}
where $\ga=(a,b,j)$ and $\ga'=(a',b',j')$.

For charged $l$-partitions $|\bla,\bs\rangle$
and $|\bmu,\bs\rangle$ such that $\bmu = \bla\cup\{\ga\}$
where $\ga$ is an addable $i$-box of $|\bla,\bs\rangle$, 
we define the quantities
\begin{equation}
\begin{array}{rl}
N_i(|\bla,\bs\rangle) = & \# \{ \text{addable $i$-boxes of $|\bla,\bs\rangle$} \} \\
& - \# \{ \text{removable $i$-boxes of $|\bla,\bs\rangle$} \}\\
\\
N_i^<(|\bla,\bs\rangle,|\bmu,\bs\rangle) = & \# \{ \text{addable $i$-boxes $\ga'$ of $|\bla,\bs\rangle$ such that $\ga'<\ga$} \} \\
& - \# \{ \text{removable $i$-boxes $\ga'$ of $|\bmu,\bs\rangle$ such that $\ga'<\ga$} \}\\
\\
N_i^>(|\bla,\bs\rangle,|\bmu,\bs\rangle) = & \# \{ \text{addable $i$-boxes $\ga'$ of $|\bla,\bs\rangle$ such that $\ga'>\ga$} \} \\
& - \# \{ \text{removable $i$-boxes $\ga'$ of $|\bmu,\bs\rangle$ such that $\ga'>\ga$} \}.\\
\end{array}
\end{equation}

\section{Module structures on the Fock space}\label{fock}

\subsection{$\Ue$-action on the level $l$ Fock space}\label{sec_action_fock} \

\begin{num}
For $s\in\Z$ and $N\in\Z_{>0}$,  we write $$\Z^N(s) = \left\{ (x_1,\dots,x_N)\in \Z^N \, | \, \sum_{k=1}^N x_k = s \right\}.$$
\end{num}

In all what follows, we fix $s\in\Z$, $l, e\in\Z_{\geq 2}$ and $q$ be an indeterminate. 
Set also $p=-q^{-1}$.
For each $l$-charge $\bs = (s^l_1,\dots,s^l_l)\in \Z^l(s)$, consider the level $l$ Fock space 
$$
\cF_{\bs,e} = \bigoplus_{\bla \in \Pi_l} \C(q) |\bla,\bs\rangle.
$$

The $\C(q)$-algebra $\Ue$ can be defined by a presentation by generators and relations,
where the generators are denoted $e_i,f_i,t_i$, $i=0,\dots,e-1$ and called the \textit{Chevalley generators} of $\Ue$ \cite[Definition 6.1.3]{GeckJacon2011}.
We do not recall the representation theory of $\Ue$, but simply mention that there is 
a nice module category denoted $\cO_{\mathrm{int}}$ consisting of so-called \textit{integrable} modules, see \cite[Definition 3.1.7]{GeckJacon2011}.
In particular, this category is semisimple and its objects have a so-called \textit{weight decomposition}.

\begin{thm}[\mbox{\cite{JMMO1991}}]\label{thm_action_fock}
The space $\cF_{\bs,e}$ is an integrable $\Ue$-module with
respect to the following action:
\begin{equation}\label{action_fock}
\begin{array}{ccl}
t_i |\bla,\bs\rangle 
&=&
q^{N_i(|\bla,\bs\rangle)}|\bla,\bs\rangle
\\
e_i |\bla,\bs\rangle
& = &
\dispst\sum_{\res(\bla\backslash\bmu)=i} q^{-N_i^<(|\bla,\bs\rangle,|\bmu,\bs\rangle)}|\bmu,\bs\rangle
\\
f_i |\bla,\bs\rangle
& = &
\dispst\sum_{\res(\bmu\backslash\bla)=i} q^{N_i^>(|\bmu,\bs\rangle,|\bla,\bs\rangle)}|\bmu,\bs\rangle
\end{array}
\end{equation}
\end{thm}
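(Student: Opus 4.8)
The statement is classical and due to \cite{JMMO1991}; I indicate the route I would take. The plan is to check, on each basis vector $|\bla,\bs\rangle$, that the three families of operators in (\ref{action_fock}) satisfy the defining relations of $\Ue$ in terms of its Chevalley generators, and then to verify integrability. Well-definedness is immediate: a charged multipartition has only finitely many addable and removable $i$-boxes, so each of $e_i$, $f_i$, $t_i$ sends a basis vector to a finite $\C(q)$-linear combination of basis vectors.

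First I would dispose of the relations involving the $t_i$. Each $|\bla,\bs\rangle$ is a common $t_i$-eigenvector, so the $t_i$ commute with one another; and the relations $t_i e_j t_i^{-1} = q^{a_{ij}} e_j$ and $t_i f_j t_i^{-1} = q^{-a_{ij}} f_j$, where $(a_{ij})$ is the Cartan matrix of type $A_{e-1}^{(1)}$, reduce to the combinatorial fact that removing a single $j$-box changes $N_i$ by exactly $a_{ij}$ (and adding one by $-a_{ij}$). This is verified by noting that a $j$-box can only create or destroy addable and removable $i$-boxes among its immediate neighbours in the Young diagram, which are the sole positions that contribute to $N_i$.

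The crux is the relation $[e_i,f_j] = \de_{ij}(t_i - t_i^{-1})/(q-q^{-1})$. I would expand $e_i f_j$ and $f_j e_i$ on $|\bla,\bs\rangle$ and compare, term by term, the coefficient of each resulting basis vector. When $i\neq j$, or when $i=j$ but the added and removed boxes are distinct (the \emph{off-diagonal} terms), both products reach the same multipartition $(\bla\cup\{\ga\})\backslash\{\ga'\}$, and a path-independence property of the exponents $N_i^<$, $N_i^>$ shows that the two coefficients coincide; hence these contributions cancel in the commutator. The surviving \emph{diagonal} terms occur only for $i=j$, where one adds and then removes, or removes and then adds, the same $i$-box: they produce a sum over the addable $i$-boxes of $|\bla,\bs\rangle$ minus a sum over its removable $i$-boxes. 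Listing all addable and removable $i$-boxes in increasing order (for the order of Section \ref{add_rem_boxes}) and recording the type of each, the exponents $N_i^>-N_i^<$ run through consecutive values, so this alternating sum telescopes to $(q^{N_i}-q^{-N_i})/(q-q^{-1})$, which is precisely the required $t_i$-eigenvalue. This double bookkeeping --- the off-diagonal cancellation together with the diagonal telescoping --- is the only genuinely delicate point and the main obstacle.

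It remains to verify the quantum Serre relations and integrability. The Serre relations are handled by the same local analysis, now tracking the $q$-powers produced by removing one $j$-box together with two $i$-boxes in every order (and dually for the $f_i$); the requisite identities among these exponents again follow because $N_i$, $N_i^<$ and $N_i^>$ only feel the boxes near those being moved. Finally, for integrability, the basis $\{|\bla,\bs\rangle\}$ is a weight basis by the $t_i$-action; $e_i$ is locally nilpotent since a multipartition has only finitely many $i$-boxes to remove; and $f_i$ is locally nilpotent because the addable and removable $i$-boxes organise into finite $\mathfrak{sl}_2^{(i)}$-strings --- concretely, starting from a vector killed by $e_i$ with $t_i$-eigenvalue $q^{N_i}$, one has $f_i^{N_i+1}|\bla,\bs\rangle = 0$. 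Hence $\cF_{\bs,e}$ is an integrable $\Ue$-module.
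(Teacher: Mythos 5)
The paper does not actually prove this statement: it is recalled verbatim from \cite{JMMO1991}, so there is no internal proof to compare against, and your proposal must be judged as a reconstruction of the literature's argument. Your outline of the $t_i$-relations, of the commutator relation $[e_i,f_j]=\de_{ij}(t_i-t_i^{-1})/(q-q^{-1})$ (off-diagonal cancellation by path-independence of the exponents, plus telescoping of the diagonal sum over addable and removable $i$-boxes to $(q^{N_i}-q^{-N_i})/(q-q^{-1})$), and of integrability is essentially the correct mechanism and matches how the level-one case is verified in the literature.

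The one genuine gap is the quantum Serre relations. Saying they are ``handled by the same local analysis'' is not an argument: this is by far the messiest part of any direct verification (note also that for $e=2$ the off-diagonal Cartan entries are $-2$, so the relation involves $e_i^3e_j$-terms and three $i$-boxes, not two as you write). There are two standard ways to close this. The first is the route actually taken in \cite{JMMO1991}: one identifies $\cF_{\bs,e}$ with the tensor product $\cF_{s_1,e}\otimes\dots\otimes\cF_{s_l,e}$ of level-one Fock spaces equipped with the comultiplication of $\Ue$; the exponents $N_i^{<}$ and $N_i^{>}$, and the total order on $i$-boxes of Section \ref{add_rem_boxes}, are then exactly the bookkeeping produced by iterating the coproduct, so all relations (Serre included) reduce to the level-one case, which is checked once. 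The second is to observe that in any weight module on which all $e_i$ and $f_i$ act locally nilpotently, the quantum Serre relations hold automatically (the quantum analogue of Kac's argument for integrable modules); since you establish the weight decomposition, the $e$--$f$ relations and local nilpotency independently of the Serre relations, you could invoke this and delete the Serre verification altogether. Either fix makes your sketch complete; as written, that step is asserted rather than proved. A minor further point: your justification of local nilpotency of $f_i$ via ``$f_i^{N_i+1}$ kills a highest weight vector of the string'' presupposes integrability; the non-circular argument is that iterated addition of $i$-boxes can only fill the finitely many positions that were addable $i$-boxes of the original multipartition, since no two $i$-boxes are ever adjacent.
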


In the sequel, we will also use level $e$ Fock spaces
$\cF_{\dbs,l}$, where $\dbs = (\ds^e_1,\dots,\ds^e_e)\in \Z^e(s)$
is an $e$-charge.
They are endowed with the structure of an integrable $\Ul$-module via the formulas
\ref{action_fock} replacing $q$ by $p$ and exchanging $e$ and $l$.
For clarity, we might further want to use the notation
$\dot{t}_j, \dot{e}_j, \dot{f}_j$ for the generators of $\Ul$.

\subsection{Uglov's decomposition of Fock spaces}\label{sec_uglov} \

Let $\La^s$ be the level one Fock space associated to $s$, i.e.
$$\La^s=\bigoplus_{\la\in\Pi}\C(q)|\la,s\rangle.$$
In Uglov \cite{Uglov1999}, level $l$ Fock spaces $\cF_{\bs,e}$ are realised as submodules of
$\La^{s}$.
By the results of Section \ref{uglovs_bij}, there are two alternative ways to index the basis elements 
of $\La^{s}$, namely:
\begin{itemize}
 \item by charged $l$-partitions $|\bla,\bs\rangle$, via the bijection $\tau$,
 \item by charged $e$-partitions $|\dbla,\dbs\rangle$, via the bijection $\dtau$.
\end{itemize}
This yields the following correspondence:

\begin{equation}\label{indexation_ug}
\begin{array}{ccccc}
\dispst\bigoplus_{\bs\in\Z^l(s)}\cF_{\bs,e} & 
\xrightleftharpoons[\hspace{4mm}\tau\hspace{4mm}]{\tau^{-1}} & 
\La^s & 
\xrightleftharpoons[\dtau^{-1}]{\hspace{4mm}\dtau\hspace{4mm}}  & 
\dispst\bigoplus_{\dbs\in\Z^e(s)} \cF_{\dbs',l}
\\
|\bla,\bs\rangle & \longleftrightarrow & 
|\la,s\rangle &
\longleftrightarrow &
|\dbla,\dbs\rangle.
\end{array}
\end{equation}

According to \cite[Section 4.2]{Uglov1999},
there is an action of $\Ue$, of $\Ul$, and of a Heisenberg algebra $\cH$ on $\La^s$.
Moreover, the action of $\Ue$ (respectively $\Ul$)
on $\La^s$ induces,
via the indexation by $l$-partitions (respectively $e$-partitions),
an action on $\cF_{\bs,e}$ (respectively $\cF_{\dbs,l}$).
These actions are precisely the one of Section \ref{sec_action_fock}.
In fact, we have 
\begin{equation}\label{decomp_l}
\La^s = \bigoplus_{\bs\in\Z^l(s)}\cF_{\bs,e}
\text{\quad and \quad}
\La^s = \bigoplus_{\dbs\in\Z^e(s)}\cF_{\dbs,l}.
\end{equation}

\begin{num}\label{fund_dom} Denote 
$$\begin{array}{rcl} 
A(s) & = & \left\{ (s_1,\dots,s_l)\in \Z^l(s) \, | \, s_1 \leq \dots \leq s_l < s_1+e \right\} \quad\text{and}\\
\dA(s) & = & \left\{ (\ds_1,\dots,\ds_e)\in \Z^e(s) \, | \, \ds_1 \leq \dots \leq \ds_e < \ds_1+l \right\}.
  \end{array}$$
\end{num}

\begin{thm}[\mbox{\cite[Proposition 4.6 and Theorem 4.8]{Uglov1999}}] \label{thm_uglov} \
\begin{enumerate}
 \item The actions of $\Ue$, $\Ul$ and $\cH$ on $\La^s$ pairwise commute.
 \item We have the decomposition
 
 \begin{equation}\label{decomp_triple}
\La^s = \bigoplus_{\bs\in A(s)} \Ue \otimes \cH \otimes \Ul  |\bemp,\bs\rangle.
\end{equation}
\end{enumerate}
\end{thm}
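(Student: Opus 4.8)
The plan is to reprove both parts by placing everything inside Uglov's realization of $\La^s$ as a semi-infinite $q$-deformed wedge space, where the three actions are simultaneously visible; the combinatorial formulas of Theorem~\ref{thm_action_fock} and the correspondence (\ref{indexation_ug}) then transport the conclusions back to charged multipartitions. Concretely I would identify $\La^s$ with the semi-infinite wedge $\bigwedge^{\infty/2}(V\otimes\C[z,z^{-1}])$, where $V=\C^e\otimes\C^l$: the factor $\C^e$ carries the vector representation of $\Ue$ over $\sle$, the factor $\C^l$ that of $\Ul$ over $\sll$, and the variable $z$ records the ``infinite direction'' along which the bosonic operators $b_m$ generating $\cH$ act. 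Matching the standard wedge monomials with the basis $\{|\la,s\rangle\}$, and via $\tau,\dtau$ with the $l$- and $e$-partition indexations, turns part (1) into a check of commutation relations on generators and part (2) into a highest-weight analysis.

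For part (1) I would verify the three commutations on the Chevalley and bosonic generators. The relation between $\Ue$ and $\Ul$ is the quantum level--rank duality: the two quantum groups act through the independent tensor factors $\C^e$ and $\C^l$ of $V$, and the compatibility of the two $R$-matrix (Hecke) structures used to build the wedge forces $e_i,f_i,t_i$ to commute with $\dot{e}_j,\dot{f}_j,\dot{t}_j$. Translated through (\ref{indexation_ug}), this is the statement that adding or removing an $i$-box (residue mod $e$) and a $j$-box (residue mod $l$, read on the $e$-partition side) are independent moves whose $q$- and $p$-weights do not interfere, which can be confirmed directly from (\ref{action_fock}). For the remaining two pairs I would use that the $b_m$ are the bosonic power-sum operators in $z$, acting by inserting or deleting complete periods --- equivalently full $e$-ribbons, cf. Remark~\ref{rem_ribbon} --- uniformly over all positions; summing the box-by-box contributions of the quantum-group generators then yields $[e_i,b_m]=[f_i,b_m]=0$ and likewise for $\Ul$. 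Checking these on generators suffices since the defining relations are preserved.

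For part (2) I would first pin down the generating vectors. The empty multipartition has no removable box, so $|\bemp,\bs\rangle$ is automatically a highest-weight vector for $\Ue$; the alcove condition $s_1\leq\dots\leq s_l<s_1+e$ defining $A(s)$ is exactly what makes it simultaneously highest weight for $\Ul$ and a lowest vector for $\cH$, which I would verify by reading the abacus through $\tau$ and $\dtau$. Next I would show that $A(s)$ is a fundamental domain for the affine symmetric group $\widehat{\fS}_l$ acting (at level $e$) on the charges $\Z^l(s)$, and that this is precisely the symmetry realized by $\Ul$, whose generators shift the $l$-charge $\bs$ along the root directions of $\sll$. Each summand $\Ue\otimes\cH\otimes\Ul\,|\bemp,\bs\rangle$ would then be identified with the tensor product of the two irreducible integrable highest-weight modules (for $\Ue$ and $\Ul$) and the bosonic Fock space of $\cH$, the Heisenberg action supplying exactly the multiplicities that reconcile the two quantum-group structures.

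The main obstacle is proving that the sum in (\ref{decomp_triple}) is both direct and exhaustive. I would settle this by a graded-dimension (character) computation in the wedge model: to each basis vector $|\la,s\rangle$ one attaches its $l$-charge $\bs$ (via (\ref{indexation_ug})), whose $\widehat{\fS}_l$-orbit meets $A(s)$ in a unique representative, together with its bosonic degree, and one checks that these data single out one summand and that the two graded dimensions agree term by term. The delicate point is the interlocking of the alcove combinatorics of $A(s)$ with the rigidity of the triple structure --- irreducibility of the two quantum-group factors and the exact size of the Heisenberg Fock space --- so that no vector is omitted or double-counted. This matching between the geometry of the fundamental domain and the representation theory is the technical heart of the statement, and is where Uglov's explicit wedge-space bookkeeping is indispensable.
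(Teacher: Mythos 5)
The paper itself offers no proof of this theorem: it is imported verbatim from Uglov (Proposition 4.6 and Theorem 4.8 of \cite{Uglov1999}), so the only fair comparison is with Uglov's argument, which is exactly the semi-infinite $q$-wedge framework you set up. Your architecture is therefore the right one, but two of your key justifications do not hold as stated. For part (1), the claim that $\Ue$ and $\Ul$ commute because they ``act through the independent tensor factors $\C^e$ and $\C^l$'' is only true of the finite-type generators; the affine generators $e_0,f_0$ and $\dot e_0,\dot f_0$, as well as the Heisenberg generators $B_m$, are all built from the \emph{same} loop variable $z$, so the three actions are coupled on $V\otimes\C[z,z^{-1}]$ and, more seriously, one must check that the commutators vanish \emph{on the quotient} defining the $q$-deformed wedge, i.e.\ that they are compatible with the straightening relations. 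That verification is the actual content of Uglov's Proposition 4.6 and is not a formality that follows from ``summing box-by-box contributions.'' Relatedly, your description of the $B_m$ as ``inserting or deleting complete periods --- equivalently full $e$-ribbons'' conflates the crystal-level combinatorics of this paper (the maps $\tb_{\pm\ka}$, which shift periods) with the algebra generators themselves, which act as $\sum_k z^m_{(k)}$, i.e.\ as sums of bead shifts by $elm$ steps (ribbons of length $elm$ on the charged partition), and whose combinatorial effect on the standard basis is far less transparent.

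For part (2), you correctly isolate exhaustiveness and directness of (\ref{decomp_triple}) as the technical heart, but the proposal stops exactly there: the graded-dimension identity you invoke is the $q$-analogue of the $(\widehat{\mathfrak{gl}}_e,\widehat{\mathfrak{gl}}_l)$ level-rank branching rule, and asserting that ``the two graded dimensions agree term by term'' is essentially a restatement of the theorem rather than a proof of it. One needs either the explicit bijection between partitions of $n$ and triples (highest-weight datum for $\Ue$, a partition for $\cH$, highest-weight datum for $\Ul$) refining the orbit decomposition of $\Z^l(s)$ under $\widehat{\fS}_l$ with $e$-dilated translations, or the undeformed level-rank duality plus a flatness argument in $q$. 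As written, the proposal is a faithful plan of Uglov's proof with its two hardest steps deferred, one of them supported by an incorrect reason.
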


\subsection{Conjugating multipartitions}\label{conjugating} \

In this section,
we modify the indexation of the basis elements of $\La^s$ by charged $e$-partitions.
For a partition $\la$, denote $\la'$ its conjugate.
Using the indexation by charged partitions,
define an anti-linear isomorphism as follows
\begin{equation}\label{conj}
\begin{array}{cccc}
\La^s & \lra & \La^{-s} \\ 
| \la,s\rangle & \longmapsto & |\la',-s\rangle \\
q & \longmapsto & q^{-1}
\end{array}
\end{equation}
This is an involution of $\bigoplus_{s\in\Z}\La^s$.
We write $u'$ for the image of $u\in \La^s$.

\begin{rem}
Since $p=-q^{-1}$, we also have that $p'=p^{-1}$.
\end{rem}

\medskip

The new indexation (to be compared with Indexation (\ref{indexation_ug})) is  given by the following procedure:

\begin{equation}\label{indexation}
\begin{array}{ccccccc}
\dispst\bigoplus_{\bs\in\Z^l(s)}\cF_{\bs,e} & 
\xrightleftharpoons[\hspace{4mm}\tau\hspace{4mm}]{\tau^{-1}} & 
\La^s & 
\xrightleftharpoons[\quad '\quad ]{'} & 
\La^{-s} & 
\xrightleftharpoons[\dtau^{-1}]{\hspace{4mm}\dtau\hspace{4mm}}  & 
\dispst\bigoplus_{\dbs\in\Z^e(s)} \cF_{\dbs',l}
\\
|\bla,\bs\rangle & \longleftrightarrow & 
|\la,s\rangle &
\longleftrightarrow & 
|\la',-s\rangle &
\longleftrightarrow &
|\dbla',\dbs'\rangle
\end{array}
\end{equation}

where $\tau$ and $\dtau$ are the isomorphisms induced by 
the bijection of Section \ref{uglovs_bij}.
Here, we have decided to use the notation $|\dbla',\dbs'\rangle$
for the charged $e$-partition in this new indexation,
so that it is compatible with that of Section \ref{uglovs_bij}.
Namely if $|\dbla,\dbs\rangle=\dtau(|\la,s\rangle)$,
then $\dbla'$ is the ``conjugate'' of $\dbla$
(that is to say, conjugate each component of
$\dbla$  and reverse it),
and $\dbs'=(-\ds_e, \dots,-\ds_2,-\ds_1)$
where $(\ds_1,\dots,\ds_e)=\dbs$.
In other words, the conjugation commutes with $\dtau$.
It also commutes with $\tau$.

We set 
$$
T = \dtau\circ(.)'\circ\tau^{-1}
\mand
\dT = T^{-1}= \tau\circ(.)'\circ\dtau^{-1}.
$$

\begin{exa}\label{exa_indexation}
Take $l=3$, $e=4$, $\bla = (5.1,3.1,1)$ and $\bs=(0,-1,1)$.
\begin{figure}[H] 
\includegraphics{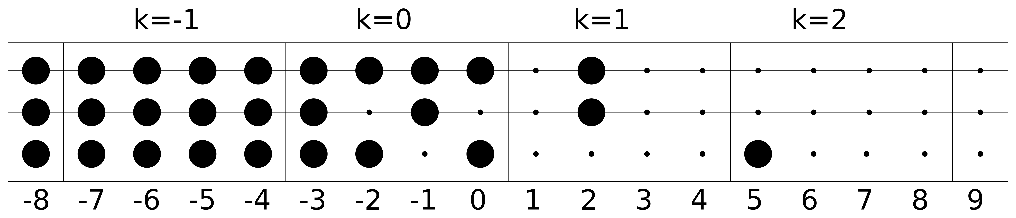}
\caption{The abacus $\cA(\bla,\bs)$}
\label{ab0}
\end{figure}
Applying the procedure described above, we get the following abacus.
\begin{figure}[H] 
\includegraphics{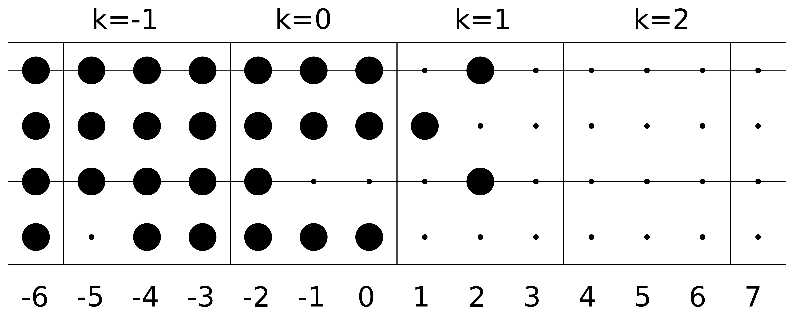}
\caption{The abacus $\cA(\dbla',\dbs')$}
\label{ab1}
\end{figure}
Hence we have $T|\bla,\bs\rangle=| \dbla',\dbs'\rangle = |(1^5,3,\emptyset,1),(-1,-1,1,1)\rangle$.
\end{exa}

\begin{prop}\label{prop_action_conj}
The action of the Chevalley operators on conjugate charged $l$-partitions
is given by the following rule.
\begin{equation}
\begin{array}{ccl}
e_{-i}|\bla',\bs'\rangle
& = &
q^{-N_{i}(|\bla,\bs\rangle)+1} (e_{i}|\bla,\bs\rangle)'
\\
f_{-i}|\bla',\bs'\rangle
& = &
q^{N_{i}(|\bla,\bs\rangle)+1} (f_{i}|\bla,\bs\rangle)',
\end{array}
\end{equation}
for all $i=0\dots,e-1$ and where indices are understood to be modulo $e$.
\end{prop}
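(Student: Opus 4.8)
The plan is to verify both identities coefficient-by-coefficient, reducing everything to the behaviour of contents, of the total order on $i$-boxes, and of the integers $N_i, N_i^<, N_i^>$ under the conjugation $(.)'$. First I would record the elementary geometry of $(.)'$ on boxes: since $\bs'=(-s_l,\dots,-s_1)$ and conjugating transposes each component and reverses their order, the box $\ga=(a,b,j)$ of $|\bla,\bs\rangle$ is sent to the box $\widetilde{\ga}=(b,a,l+1-j)$ of $|\bla',\bs'\rangle$, whose content is $a-b-s_j=-\cont(\ga)$. Hence $\res(\widetilde{\ga})=-\res(\ga)$, so $(.)'$ restricts to a content-negating bijection from the addable (resp. removable) $i$-boxes of $|\bla,\bs\rangle$ onto the addable (resp. removable) $(-i)$-boxes of $|\bla',\bs'\rangle$; in particular $N_{-i}(|\bla',\bs'\rangle)=N_i(|\bla,\bs\rangle)$. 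This is what forces $e_{-i},f_{-i}$ (rather than $e_i,f_i$) to appear on the left, and makes adding/removing an $i$-box for $|\bla,\bs\rangle$ correspond to adding/removing a $(-i)$-box for $|\bla',\bs'\rangle$.

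The second, more delicate, input is the behaviour of the order of Section \ref{add_rem_boxes}. I would check straight from the definition that $(.)'$ reverses it, i.e. $\widetilde{\ga_1}<\widetilde{\ga_2}\Leftrightarrow\ga_1>\ga_2$: contents get negated, and the tie-break $j>j'$ becomes $l+1-j<l+1-j'$, so the two effects are compatible and the order is exactly reversed (this is where the component tie-break must be handled carefully). Consequently, for a pair with $\bmu=\bla\cup\{\ga\}$, the $i$-boxes lying below $\ga$ are matched with the $(-i)$-boxes lying above $\widetilde{\ga}$ and conversely, so that reading off the definitions yields $N_{-i}^>(|\bla',\bs'\rangle,|\bmu',\bs'\rangle)=N_i^<(|\bla,\bs\rangle,|\bmu,\bs\rangle)$ and $N_{-i}^<(|\bla',\bs'\rangle,|\bmu',\bs'\rangle)=N_i^>(|\bla,\bs\rangle,|\bmu,\bs\rangle)$ — the $<$ and $>$ quantities swap. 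I would then establish the bookkeeping identity $N_i^<(|\bla,\bs\rangle,|\bmu,\bs\rangle)+N_i^>(|\bla,\bs\rangle,|\bmu,\bs\rangle)=N_i(|\bla,\bs\rangle)+1$, which comes from comparing the addable $i$-boxes of $|\bla,\bs\rangle$ with the removable $i$-boxes of $|\bmu,\bs\rangle$ and noting that passing from one set to the other changes the status of $\ga$ alone, since its two neighbours in component $j$ have residues $i\pm1$.

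With these in hand the proof is a matching of coefficients. Writing out $f_i|\bla,\bs\rangle$ from Theorem \ref{thm_action_fock}, applying the anti-linear map $(.)'$ so that each $q^{n}$ becomes $q^{-n}$, and comparing term by term with $f_{-i}|\bla',\bs'\rangle$, the exponent attached to $|\bmu',\bs'\rangle$ equals $-N_i^>(|\bla,\bs\rangle,|\bmu,\bs\rangle)$ on one side and $N_{-i}^>(|\bla',\bs'\rangle,|\bmu',\bs'\rangle)=N_i^<(|\bla,\bs\rangle,|\bmu,\bs\rangle)$ on the other, whose difference is precisely $N_i^<+N_i^>=N_i(|\bla,\bs\rangle)+1$ by the bookkeeping identity; this is the scalar $q^{N_i(|\bla,\bs\rangle)+1}$ of the statement. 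The computation for $e_{-i}$ is entirely parallel, with the roles of $<$ and $>$ (hence the sign of the exponent) interchanged, producing $q^{-N_i(|\bla,\bs\rangle)+1}$. The main obstacle is exactly this scalar bookkeeping — getting the order-reversal right including the tie-break, and pinning down the additive constant relating $N_i^<+N_i^>$ to $N_i$ — since those two ingredients alone determine the exponents $\pm N_i+1$; everything else is formal.
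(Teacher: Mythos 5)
Your overall strategy is exactly the one the paper sketches: conjugation preserves the addable/removable status of boxes, negates contents (hence residues), and reverses the total order of Section \ref{add_rem_boxes} --- including the tie-break, since $j>j'$ becomes $l+1-j<l+1-j'$ --- so that $N_{-i}(|\bla',\bs'\rangle)=N_i(|\bla,\bs\rangle)$ and the quantities $N_i^{<}$ and $N_i^{>}$ are interchanged. All of that is correct and is precisely the content of the paper's (very terse) proof; your write-up makes it explicit.

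The gap is in your bookkeeping identity. Under the definitions of Section \ref{add_rem_boxes}, for $\bmu=\bla\cup\{\ga\}$ with $\ga$ an addable $i$-box, one finds $N_i^{<}(|\bla,\bs\rangle,|\bmu,\bs\rangle)+N_i^{>}(|\bla,\bs\rangle,|\bmu,\bs\rangle)=N_i(|\bla,\bs\rangle)-1$, not $N_i(|\bla,\bs\rangle)+1$. Indeed, the addable $i$-boxes $\ga'$ of $\bla$ with $\ga'<\ga$ or $\ga'>\ga$ are all the addable $i$-boxes except $\ga$ itself, so there are $A-1$ of them; and, as you correctly observe, the only $i$-box whose status changes in passing from $\bla$ to $\bmu$ is $\ga$ (its neighbours have residues $i\pm1$), so the removable $i$-boxes of $\bmu$ distinct from $\ga$ are exactly the removable $i$-boxes of $\bla$, namely $R$ of them; the sum is therefore $(A-1)-R=N_i-1$. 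Your statement that the status change of $\ga$ alone accounts for the discrepancy is right, but each of the two exclusions of $\ga$ contributes $-1$, not $+1$. With the corrected value your coefficient matching produces the scalars $q^{N_i(|\bla,\bs\rangle)-1}$ for $f$ and $q^{-N_i(|\bla,\bs\rangle)-1}$ for $e$ rather than the displayed $q^{\pm N_i+1}$; a direct check on $l=e=2$, $\bs=(0,0)$, $\bla=\bemp$, where $f_0|\bemp,\bs\rangle=|((1),\emptyset),\bs\rangle+q\,|(\emptyset,(1)),\bs\rangle$ and $N_0=2$, gives the ratio $q=q^{N_0-1}$ and not $q^{3}=q^{N_0+1}$. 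So the identity $N_i^{<}+N_i^{>}=N_i+1$ cannot be used as stated: either it, or the constant in the Proposition, has to be reconciled with the paper's precise conventions before the term-by-term comparison closes.
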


Using Theorem \ref{thm_action_fock}, one recovers an explicit formula.
As usual, we have a similar result for the Chevalley operators
$\dot{e_j}$ and $\dot{f_j}$ of $\Ul$.

\proof
One way to see it is to use the explicit action of $\Ue$ 
in terms of removable and addable $i$-boxes
of Theorem \ref{thm_action_fock}. 
Now, it is clear that the conjugation isomorphism (\ref{conj})
maps a removable (respectively addable) $i$-box of $|\bla,\bs\rangle$
to a removable (respectively addable) $(-i)$-box of $|\bla',\bs'\rangle$,
and that it reverses the way these boxes are ordered.
The result follows.
\endproof

Note that the power of $q$ appearing can be interpreted as the action of the
element $t_i\in\Ue$ according to Theorem \ref{thm_action_fock}.
This way, we recover the claim of \cite[Proposition 5.10]{Uglov1999}.

\medskip

\begin{thm}\label{thm_uglov_bis}
The claim of Theorem \ref{thm_uglov} is also valid when the action
of $\Ul$ is computed with respect to the indexation (\ref{indexation}).
\end{thm}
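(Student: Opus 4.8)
The plan is to deduce the statement from Theorem~\ref{thm_uglov} applied to the opposite charge $-s$, transporting the entire structure back to $\La^s$ along the conjugation involution $\Phi = (.)'\colon \La^s \to \La^{-s}$ of~(\ref{conj}). The crucial observation is that, by the very definition of Indexation~(\ref{indexation}), the $\Ul$-action computed with respect to~(\ref{indexation}) is exactly the transport along $\Phi$ of the \emph{standard} $\Ul$-action on $\La^{-s}$: the new indexing map on the $\Ul$-side factors as $\dtau \circ \Phi \circ \tau^{-1}$, and $\Phi$ carries the decomposition $\La^{-s} = \bigoplus_{\dbs'} \cF_{\dbs',l}$ together with its $\Ul$-module structure onto $\La^s$. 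In other words, $\Phi$ intertwines the new $\Ul$-action on $\La^s$ with the native $\Ul$-action on $\La^{-s}$.

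First I would record how $\Phi$ interacts with the other two actions. For $\Ue$, Proposition~\ref{prop_action_conj} shows that conjugation by $\Phi$ sends the Chevalley generator $e_i$ (resp. $f_i$) acting on $\La^s$ to a scalar multiple of $e_{-i}$ (resp. $f_{-i}$) acting on $\La^{-s}$, the scalar being an invertible diagonal operator, namely a power of $t_i$. Consequently $\Phi$-conjugation carries the native $\Ue$-action on $\La^s$ onto the native $\Ue$-action on $\La^{-s}$, precomposed with the diagram automorphism $i \mapsto -i$ and a diagonal twist by the torus; in particular the operator subalgebra of $\End(\La^s)$ generated by $\{e_i,f_i,t_i\}$ is mapped isomorphically onto the one generated by $\{e_{-i},f_{-i},t_{-i}\}$ on $\La^{-s}$, i.e. onto the same standard $\Ue$-action with relabelled generators. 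I would then establish the analogous compatibility for the Heisenberg algebra, namely that $\Phi$ conjugates the native $\cH$-action on $\La^s$ into the native $\cH$-action on $\La^{-s}$ up to an automorphism of $\cH$. This is the one ingredient not supplied by Proposition~\ref{prop_action_conj}, and I expect it to be the main obstacle: it must be checked in Uglov's bosonic realisation of $\cH$, where conjugation of partitions is the classical duality, which sends each Heisenberg generator to (a sign times) a Heisenberg generator and hence preserves the defining relations.

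Granting these three intertwining properties, the commutativity assertion of Theorem~\ref{thm_uglov}(1) follows. The pair $(\Ue,\cH)$ already commutes on $\La^s$, unchanged. For the two pairs involving $\Ul$, commutativity of the native $\Ue$- (resp. $\cH$-) action with the new $\Ul$-action on $\La^s$ becomes, after conjugating by the bijection $\Phi$, commutativity on $\La^{-s}$ of an automorphism-twist of the native $\Ue$- (resp. $\cH$-) action with the native $\Ul$-action; since twisting one factor by an algebra automorphism does not change the operator subalgebra it generates, this reduces to Theorem~\ref{thm_uglov}(1) for the charge $-s$. The anti-linearity of $\Phi$ is immaterial here, as commutativity of operators is preserved under conjugation by any bijection.

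Finally, for the decomposition~(\ref{decomp_triple}) I would transport Theorem~\ref{thm_uglov}(2) for $-s$ back along $\Phi^{-1}$. Since conjugation commutes with $\tau$, on the $l$-partition side $\Phi$ sends $|\bla,\bs\rangle$ to $|\bla',\bs'\rangle$ with $\bs' = (-s_l,\dots,-s_1)$; in particular it fixes the empty multipartition, $\Phi|\bemp,\bs\rangle = |\bemp,\bs'\rangle$, and it restricts to a bijection $A(s) \xrightarrow{\sim} A(-s)$, because $s_1 \le \dots \le s_l < s_1+e$ is equivalent to $-s_l \le \dots \le -s_1 < -s_l+e$. Applying $\Phi^{-1}$ to $\La^{-s} = \bigoplus_{\bs' \in A(-s)} \Ue \otimes \cH \otimes \Ul\,|\bemp,\bs'\rangle$, and using that the twisted $\Ue$- and $\cH$-actions generate the same operator algebras as the native ones so that the cyclic subspaces agree, yields exactly $\La^s = \bigoplus_{\bs \in A(s)} \Ue \otimes \cH \otimes \Ul\,|\bemp,\bs\rangle$ with the new $\Ul$-action, as required.
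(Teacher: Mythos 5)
Your overall strategy---transporting Theorem \ref{thm_uglov} for the charge $-s$ back along the conjugation involution $\Phi$---can in principle be made to work, but as you yourself flag, it hinges on an ingredient you do not prove: that conjugation by $\Phi$ carries the native $\cH$-action on $\La^s$ into the native $\cH$-action on $\La^{-s}$ up to an automorphism. This is a genuine gap. It is a nontrivial statement about Uglov's bosonic operators on the wedge space, it is not supplied by Proposition \ref{prop_action_conj}, and your one-sentence sketch (classical duality of symmetric functions sends each Heisenberg generator to a sign times a Heisenberg generator) is a heuristic, not a verification; one would have to go back to \cite[Proposition 5.10]{Uglov1999} or redo the computation in the wedge realisation. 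A similar, milder, remark applies to your $\Ue$-step: Proposition \ref{prop_action_conj} compares actions on $|\bla,\bs\rangle$ and on $|\bla',\bs'\rangle$, and you are extrapolating from it an identification of the $\Phi$-conjugate of the entire $\Ue$-operator algebra on $\La^s$ with the one on $\La^{-s}$.

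The paper's proof sidesteps all of this by applying Proposition \ref{prop_action_conj} to the \emph{other} factor: the version for the generators $\dot{e}_j,\dot{f}_j$ of $\Ul$ (which the paper records explicitly after the proposition) shows that the new $\Ul$-Chevalley operators, computed on $\La^s$ via Indexation (\ref{indexation}), coincide with the old ones up to the relabelling $j\mapsto -j$ and an invertible diagonal twist by a power of $\dot{t}_j$. Hence the new and old $\Ul$-actions generate the same subalgebra of operators on the \emph{same} space $\La^s$, and both parts of Theorem \ref{thm_uglov}---pairwise commutation with $\Ue$ and $\cH$, and the cyclic decomposition---carry over verbatim for the same charge $s$, with no need to understand how $\Phi$ interacts with $\Ue$ or $\cH$ at all. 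Rewriting your argument this way removes the unproven Heisenberg step entirely.
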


\proof 
Theorem \ref{thm_uglov} says that the Chevalley operators of $\Ue$ and $\Ul$ commute on $\La^s$
when computed on the basis elements via
the correspondence $|\bla,\bs\rangle \leftrightarrow |\dbla,\dbs\rangle$.
Because of the formulas of Proposition \ref{prop_action_conj}, they still commute when we use the correspondence
$|\bla,\bs\rangle \leftrightarrow |\dbla',\dbs'\rangle$.
\endproof

\section{Two commuting crystals}\label{double_crystal}

\subsection{Reminders on crystal bases of integrable $\Ue$-representations}\label{crystal_bases}\

Kashiwara's theory of crystal bases \cite{Kashiwara1990},
yielding the theory of canonical bases, 
provides a combinatorial tool to study integrable $\Ue$-modules, by considering bases ``at $q=0$''.

Introducting crystals requires defining crystal operators \cite[Section 2.4]{Kashiwara1991}:
\begin{equation}\label{def_crystal_op}
\begin{array}{c}
\begin{array}{lcl}
\te^{}_i & = & (q t_i \De_i)^{-1/2}e_i \\
\tf^{}_i & = & (q t_i^{-1} \De_i)^{-1/2}f_i \\
\end{array}
\\
\text{where } \De_i=q^{-1}t_i+qt_i+(q-q^{-1})^2e_if_i-2,
\end{array}
\end{equation}
and where $e_i,f_i,t_i$, $i=0,\dots,e-1$ are the Chevalley operators of $\Ue$.

\medskip

Let $A_q$ (resp. $A_{q^{-1}}$) $\subset \Q(q)$  be the ring of rational functions in $q$ without pole at $0$ (resp. at $\infty$).
Following \cite[Definition 3.1.2]{Kashiwara1993}, 
we define a \textit{crystal lattice} $L$ at $q=0$ of an integrable $\Ue$-module $M$  as a free $A_q$-submodule of $M$ such that:
\begin{itemize}
 \item $L$ generates $M$ as a $\Q(q)$-vector space,
\item $L$ decomposes as a direct sum according to the weight space decomposition of $M$, 
\item $L$ is stable by the action of the crystal operators $\te_i^{}$ and $\tf_i^{}$ for all $i=0,\dots,e-1$.
\end{itemize}
We get similars definitions by replacing ``$q=0$'' by ``$q=\infty$'' and $A_q$ by $A_{q^{-1}}$.

Since a crystal lattice $L$ at $q=0$ is stable by the actions of $\te_i^{}$, $\tf_i^{}$
these induce operators on $L/qL$.
We denote them by the same symbols.
This is also valid at $q=\infty$.

\begin{defi}\label{def_crystal_basis}
A \textit{crystal basis} at $q=0$ of an integrable $\Ue$-module $M$ is a pair $(L,B)$ such that
\begin{itemize}
 \item $L$ is a crystal lattice of $M$ at $q=0$,
 \item $B$ is a $\Q$-basis of $L/qL$,
 \item $B$ decomposes as a disjoint union according to the direct sum decomposition of $L$,
 \item $\te_i B \subset B\sqcup\{0\}$ and $\tf_i B \subset B\sqcup\{0\}$ for all $i$,
 \item For all $b,b'\in B$ and for all $i$, we have $\tf_i b = b'$ if and only if $b=\te_i b'$.
\end{itemize}
\end{defi}

Similarly, one defines \textit{crystal bases at $q=\infty$}.

Crystal bases of integrable $\Ue$-modules always exist 
and are uniquely determined up to isomorphism \cite[Theorems 2 and 3]{Kashiwara1991}, \cite[Theorem 2.2]{JMMO1991}.
Therefore, we will allow the terminology ``the'' crystal basis for an integrable $\Ue$-module.

If $M$ is an integrable $\Ue$-module with crystal basis $(L,B)$, we define
the \textit{crystal graph} or \textit{$\Ue$-crystal}\footnote{
This simplified terminology is justified by the fact that the crystal graph of an integrable $\Ue$-module is an object in the category of crystals 
as defined by Kashiwara \cite[Section 7.2]{Kashiwara1995}.}  
of $M$ to be the colored oriented graph with set of vertices $B$
and arrows $b\overset{i}{\lra}b'$ whenever $b'=\tf_i b$.

\subsection{Crystal of the Fock space}\label{crystal_graph}\

We focus now on the integrable $\Ue$-module $\cF_{\bs,e}$.
The following result is \cite[Theorem 3.7]{JMMO1991}.

\begin{thm}\label{thm_crystal_basis} 
Set 
\begin{equation*}
\begin{array}{ccl}
L
&=& \dispst
\bigoplus_{\bla\in\Pi_l} A_q |\bla,\bs\rangle \text{\quad and}
\\
B
&=&
\left\{ |\bla,\bs\rangle \mod qL \, ; \, \bla\in\Pi_l \right\}.
\end{array}
\end{equation*}
Then $(L,B)$ is the crystal basis of $\cF_{\bs,e}$ at $q=0$.
\end{thm}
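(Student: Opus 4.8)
The plan is to verify the three defining conditions of a $q=0$ crystal basis (Definition \ref{def_crystal_basis}) directly for the pair $(L,B)$ on $\cF_{\bs,e}$, using the explicit combinatorial action of Theorem \ref{thm_action_fock} to compute the crystal operators $\te_i,\tf_i$ modulo $qL$. Since crystal bases exist and are unique up to isomorphism for integrable modules, it suffices to exhibit one pair satisfying the axioms; I would produce it from the natural basis of charged multipartitions.

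First I would check that $L=\bigoplus_\bla A_q|\bla,\bs\rangle$ is a crystal lattice. Two of the three lattice conditions are immediate: $L$ generates $\cF_{\bs,e}$ over $\Q(q)$ since the $|\bla,\bs\rangle$ form a $\C(q)$-basis, and $L$ respects the weight decomposition because each basis vector $|\bla,\bs\rangle$ lies in a single weight space (its weight is determined by the residue statistics $N_i$, which act diagonally via the $t_i$ by the first line of (\ref{action_fock})). The substantive point is stability under $\te_i,\tf_i$. Here I would compute the action of $\tf_i$ on a basis vector modulo $qL$ using the formula $f_i|\bla,\bs\rangle=\sum_{\res(\bmu\backslash\bla)=i} q^{N_i^>}|\bmu,\bs\rangle$ together with the normalization $\tf_i=(qt_i^{-1}\Delta_i)^{-1/2}f_i$. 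The key combinatorial fact is that among the addable $i$-boxes of $|\bla,\bs\rangle$, exactly one is \emph{largest} with respect to the total order on $i$-boxes from Section \ref{add_rem_boxes}; adding that box yields the exponent $N_i^>=0$, while all other summands carry a strictly positive power of $q$ and hence vanish modulo $qL$. This is the standard ``highest box'' analysis, and it shows $\tf_i|\bla,\bs\rangle\equiv|\bmu,\bs\rangle \pmod{qL}$ where $\bmu$ is obtained by adding the largest addable $i$-box (or $\tf_i|\bla,\bs\rangle\equiv 0$ if there is none). An entirely parallel computation with the lowest removable $i$-box handles $\te_i$.

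Granting these computations, the crystal-basis axioms follow cleanly. Setting $B=\{|\bla,\bs\rangle \bmod qL\}$, this is manifestly a $\Q$-basis of $L/qL$ decomposing along the weight spaces, and the previous paragraph shows $\tf_i B\subset B\sqcup\{0\}$ and $\te_i B\subset B\sqcup\{0\}$. For the final axiom, the ``add the largest addable $i$-box'' and ``remove the lowest removable $i$-box'' operations are mutually inverse: if $\tf_i|\bla,\bs\rangle=|\bmu,\bs\rangle$ in $B$, then the box $\bmu\backslash\bla$ is the lowest removable $i$-box of $\bmu$, so $\te_i|\bmu,\bs\rangle=|\bla,\bs\rangle$, and conversely. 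Hence $\tf_i b=b'\iff b=\te_i b'$, and $(L,B)$ is a crystal basis.

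The main obstacle is the exponent bookkeeping in verifying that the normalized operators $\te_i,\tf_i$ act modulo $qL$ exactly as the ``extremal box'' rule predicts. One must track the scalar $(qt_i^{-1}\Delta_i)^{-1/2}$ acting on a highest-weight-like vector for the $\mathfrak{sl}_2$-string generated by a fixed residue $i$, and confirm it contributes no negative power of $q$ that would spoil lattice stability; the cleanest route is to decompose $\cF_{\bs,e}$ into $i$-strings (Kashiwara's reduction to the $\Ue$-analogue of $\mathfrak{sl}_2$-theory) and invoke the known $q=0$ behaviour on each string, so that the delicate $\Delta_i$ factor is handled abstractly rather than by direct computation. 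Since this is precisely the content of \cite[Theorem 3.7]{JMMO1991}, I would ultimately cite that analysis for the string-wise normalization while presenting the extremal-box combinatorics explicitly.
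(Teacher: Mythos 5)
The paper does not prove this statement at all: it is quoted verbatim from \cite[Theorem 3.7]{JMMO1991}, so there is no internal argument to compare against. Your proposal, which ends by deferring the ``delicate $\Delta_i$ factor'' to the same reference, therefore lands in essentially the same place; the issue is with the explicit combinatorics you present along the way, which contains a genuine error.

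The surviving term of $\tf_i|\bla,\bs\rangle$ modulo $qL$ is \emph{not} obtained by adding the largest addable $i$-box; it is the \emph{good} addable $i$-box, determined by the reduced $i$-word $\hw_i$ after cancelling all $(-+)$ subwords (this is exactly Theorem \ref{thm_crystal_graph} of the paper). These two rules genuinely differ. For a concrete failure: if $|\bla,\bs\rangle$ has exactly one removable $i$-box and one addable $i$-box with the removable one smaller, then $w_i=(-+)$, the reduced word is empty, and $\tf_i$ annihilates the class of $|\bla,\bs\rangle$ in $L/qL$; your rule would instead add the unique (hence largest) addable box. The underlying confusion is between $f_i$ and $\tf_i$: the exponent $N_i^>$ attached to the largest addable box in the formula for $f_i$ is the number of addable boxes above it \emph{minus} the number of removable boxes above it, so it can be negative, and indeed $f_i$ does not stabilize $L$. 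The normalization $(qt_i^{-1}\Delta_i)^{-1/2}$ is not a harmless scalar bookkeeping device on each basis vector: the standard basis vectors $|\bla,\bs\rangle$ are not adapted to the decomposition of $\cF_{\bs,e}$ into $i$-strings, and the $(-+)$ cancellations in the signature rule are precisely the combinatorial shadow of re-expressing them in a string-adapted basis. That re-expression is the actual content of \cite[Theorem 3.7]{JMMO1991}, so the part of your argument you present as ``explicit'' is the part that is wrong, and the part you delegate to the citation is the part that carries the whole proof. If you want a self-contained argument, you would need to run the signature-rule analysis (good boxes via $\hw_i$) rather than the extremal-box one, and verify lattice stability of $\te_i,\tf_i$ string by string.
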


Thanks to this theorem, we can identify
the set of charged $l$-partitions (which is the standard basis of $\cF_{\bs,e}$)
with $B$.
We will do so in the rest of the paper.

In order to describe the crystal graph of $\cF_{\bs,e}$ combinatorially, we need to indroduce
the notion of \textit{good} boxes for $l$-partitions.
Fix $i\in\{0,\dots,e-1\}$, and let $|\bla,\bs\rangle$ be a charged $l$-partition.
Recall that we have defined in Section \ref{add_rem_boxes} a total order $<$ on the set of removable and addable $i$-boxes of 
$|\bla,\bs\rangle$.
List the addable and removable $i$-boxes of $|\bla,\bs\rangle$ in increasing order with respect to $<$,
and encode each addable $i$-box by a sign $+$ and each removable $i$-box by a sign $-$.
This yields a word in the letters $+$ and $-$, denoted $w_i(|\bla,\bs\rangle)$ 
(or simply $w_i$) and called the \textit{$i$-word} of $|\bla,\bs\rangle$.
Now, delete recursively the subwords of the form $(-+)$ in $w_i$, in order
to obtain a word of the form $(+)^\al(-)^\be$, denoted 
$\hw_i(|\bla,\bs\rangle)$ (or simply $\hw_i$) and called the \textit{reduced $i$-word} of $|\bla,\bs\rangle$.

\begin{defi}
The \textit{good} addable (respectively removable)
$i$-box of $|\bla,\bs\rangle$ is the box corresponding to the leftmost sign $-$ 
(respectively the rightmost sign $+$) in $\hw_i$. 
\end{defi}

\begin{thm}[\mbox{\cite[Theorem 3.8]{JMMO1991}}]\label{thm_crystal_graph}
We have $|\bla,\bs\rangle \overset{i}{\lra}|\bmu,\bs\rangle$ in the crystal graph of $\cF_{\bs,e}$ at $q=0$
if and only if $\bmu$ is obtained from $\bla$
by adding its good addable $i$-box (if it exists).
\end{thm}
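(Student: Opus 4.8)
The plan is to reduce the statement to the rank-one situation and then invoke Kashiwara's crystal theory for $U_q(\mathfrak{sl}_2)$. The crucial observation is that the crystal operators $\te_i$ and $\tf_i$ defined in (\ref{def_crystal_op}) only involve the Chevalley operators $e_i,f_i,t_i$ for a single fixed $i$. These generate a subalgebra $\mathcal{U}_i\cong U_q(\mathfrak{sl}_2)$, and $(\te_i,\tf_i)$ are precisely the associated $\mathfrak{sl}_2$-crystal operators. Hence it suffices to describe $\cF_{\bs,e}$ as a $\mathcal{U}_i$-module and to compute its crystal graph for each $i$ separately, using the crystal basis already provided by Theorem \ref{thm_crystal_basis}.

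First I would translate the action of Theorem \ref{thm_action_fock} into the abacus language. A bead on row $j$ at a position $c\equiv i \mod e$ with an empty spot immediately to its right encodes an addable $i$-box, while an empty spot at such a position with a bead immediately to its right encodes a removable $i$-box; all other local configurations on the pairs of adjacent positions $(c,c+1)$ with $c\equiv i$ are inert under $e_i$ and $f_i$. Grouping the basis vectors according to the inert part of the abacus (everything except these active adjacent pairs) decomposes $\cF_{\bs,e}$, as a $\mathcal{U}_i$-module, into a direct sum of submodules, each of which is an ordered tensor product of two-dimensional $\mathcal{U}_i$-modules, with one factor per active pair and the tensor order dictated by the order $<$ of Section \ref{add_rem_boxes}.

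The key computation is then to check that the powers of $q$ occurring in Theorem \ref{thm_action_fock}, namely the exponents $N_i^<$ and $N_i^>$, are exactly the exponents produced by the iterated coproduct of $\mathcal{U}_i$ acting on such an ordered tensor product, while $N_i$ records the $\mathfrak{sl}_2$-weight. Once this identification is made, the theorem becomes an instance of Kashiwara's tensor product rule for $\mathfrak{sl}_2$-crystals: the recursive cancellation of subwords $(-+)$ in the $i$-word $w_i$ is precisely the signature/bracketing procedure, the surviving reduced word $\hw_i=(+)^\al(-)^\be$ reads off the relevant invariants, and $\tf_i$ acts on $|\bla,\bs\rangle \mod qL$ by adding the good addable $i$-box read off from $\hw_i$ as in the definition preceding the statement. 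Combined with Theorem \ref{thm_crystal_basis}, this yields the description of the arrows.

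The main obstacle is the bookkeeping in the middle step: one must verify carefully that the order $<$ turns each summand into the \emph{correctly} ordered tensor product, and that $N_i^<$ and $N_i^>$ match the comultiplication exponents factor by factor, while tracking the normalisation $(q t_i^{-1}\De_i)^{-1/2}$ as $q\to 0$. This is exactly where the definitions of $N_i^<$ and $N_i^>$ in terms of boxes smaller, respectively greater, than $\ga$ are engineered to reproduce the $\mathfrak{sl}_2$ tensor coefficients, and establishing this compatibility is the technical heart of the argument.
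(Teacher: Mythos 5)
The paper does not prove this statement: Theorem \ref{thm_crystal_graph} is quoted verbatim from \cite[Theorem 3.8]{JMMO1991}, so there is no internal proof to compare yours against. Your sketch is, in substance, the argument of the cited source and of the standard literature: restrict to the subalgebra $\mathcal{U}_i\cong U_q(\mathfrak{sl}_2)$ generated by $e_i,f_i,t_i^{\pm1}$; translate addable and removable $i$-boxes into the adjacent abacus pairs $(c,c+1)$ with $c\equiv i\bmod e$ carrying exactly one bead; observe that toggling these pairs is independent (adding an $i$-box never creates or destroys another addable or removable $i$-box, since the neighbouring boxes have residues $i\pm1$), so that each equivalence class of basis vectors spans a $\mathcal{U}_i$-submodule isomorphic to an ordered tensor product of two-dimensional modules; check that $N_i$, $N_i^<$, $N_i^>$ reproduce the weight and the comultiplication exponents; and conclude by Kashiwara's tensor product rule, whose signature procedure is exactly the recursive cancellation producing $\hw_i$. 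This is a sound plan, and you correctly locate the only place where real work happens.

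Two caveats. First, your argument also consumes Theorem \ref{thm_crystal_basis}, which the paper likewise only cites; in a self-contained treatment both statements fall out of the same $\mathfrak{sl}_2$-reduction at once (the tensor product theorem delivers the crystal lattice and basis of each summand simultaneously with the signature rule), so they should be proved together rather than treating \ref{thm_crystal_basis} as independent input. Second, the direction conventions --- whether one cancels $(-+)$ or $(+-)$, which tensor factor the order $<$ places first, and which end of $\hw_i$ carries the good box --- are forced by which of $N_i^<$, $N_i^>$ appears in the formula for $e_i$ versus $f_i$. You cannot fully outsource this to ``the definition preceding the statement'': as printed, that definition attaches the good \emph{addable} box to the leftmost sign $-$, i.e.\ to a \emph{removable} box (the roles of the leftmost $-$ and the rightmost $+$ are evidently interchanged there), so your verification must pin the convention down from the module structure itself rather than inherit it.
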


\begin{exa}\label{exa_crystal_op}
We look again at Example \ref{exa_indexation}.
The reduced $i$-words for $i=0,1,2,3$ are
\begin{equation*}\label{iwords}
\begin{array}{ccl}
\hw_0 & = & ++- \\
\hw_1 & = & +- \\
\hw_2 & = & ++ \\
\hw_3 & = & +-
\end{array}
\end{equation*}
The action of the crystal operators is then depicted
in the following abacus:
\begin{figure}[H] 
\includegraphics{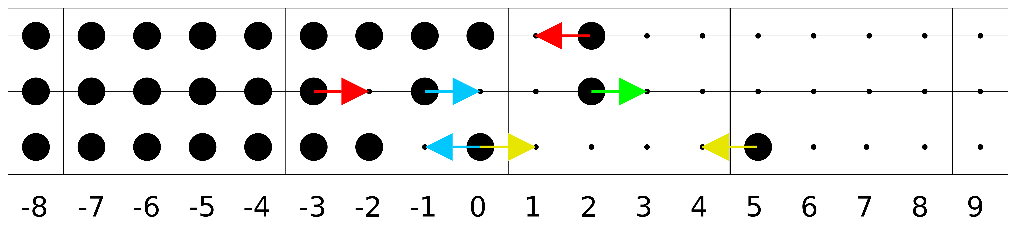}
\caption{The action of $\te_0$ and $\tf_0$ (yellow),
$\te_1$ and $\tf_1$ (red),
$\tf_2$ (green),
$\te_3$ and $\tf_3$ (blue)
}
\label{ab14}
\end{figure}

\end{exa}

By level-rank duality, one can switch the roles of $e$ and $l$
to describe the crystal graph at $p=0$ of the representations
$\cF_{\dbs,l}$ or $\cF_{\dbs',e}$ of $\Ul$.
The crystal operators appearing are denoted by ${\tde_j}$ and ${\tdf_j}$.

\begin{rem}
Because of the combinatorial definition of $\dtau$, one sees that the action of a crystal operator of $\Ul$
on the corresponding partition is to remove/add a ``good'' $l$-ribbon.
The definition of $\tau$ being different, this does not hold for $\Ue$.
\end{rem}

\subsection{Commutation of the crystal operators and formulas for computation}\label{comm_crystals}\

In this section, we compare the $\Ue$-crystal on $\cF_{\bs,e}$ and the $\Ul$-crystal on $\cF_{\dbs',l}$
using Correspondence (\ref{indexation}).
First, we need to the following fact about the corresponding crystal operators.

\begin{lem}\label{lem_comm_crystals}
The crystal operators $\te_i, \tf_i$, $i=0,\dots,e-1$ and $\tde_j,\tdf_j$, $j=0,\dots,l-1$ commute on $\La^s$.
\end{lem}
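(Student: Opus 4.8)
The plan is to reduce the statement to the commutation of the \emph{full} quantum group actions already recorded in Theorem \ref{thm_uglov}(1) (valid for the indexation (\ref{indexation}) by Theorem \ref{thm_uglov_bis}). Recall from (\ref{def_crystal_op}) that the crystal operators factor as
\[
\te_i = \phi_i^{+}\, e_i, \qquad \tf_i = \phi_i^{-}\, f_i, \qquad \phi_i^{+} = (q t_i \De_i)^{-1/2},\quad \phi_i^{-} = (q t_i^{-1}\De_i)^{-1/2},
\]
and likewise $\tde_j = \dot{\phi}_j^{+}\,\dot{e}_j$, $\tdf_j = \dot{\phi}_j^{-}\,\dot{f}_j$, where the dotted factors $\dot{\phi}_j^{\pm}$ are the analogous expressions in the $\Ul$-generators $\dot{t}_j, \dot{\De}_j$ with $q$ replaced by $p$. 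Since the Chevalley generators $e_i, f_i$ of $\Ue$ commute with $\dot{e}_j, \dot{f}_j$ of $\Ul$ as operators on $\La^s$ by Theorem \ref{thm_uglov}(1), it remains only to check that the normalizing factors $\phi_i^{\pm}$ and $\dot{\phi}_j^{\pm}$ commute with everything coming from the opposite quantum group.

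The key point I would establish is that $\phi_i^{\pm}$ commutes with the entire action of $\Ul$, and symmetrically. Indeed, $\phi_i^{\pm}$ is a function of the single operator $q t_i^{\pm1}\De_i$. By Theorem \ref{thm_uglov}(1), both $t_i$ and $\De_i$ --- the latter being a polynomial in $e_i, f_i, t_i$ by (\ref{def_crystal_op}) --- commute with every generator of $\Ul$, hence so does $q t_i^{\pm 1}\De_i$. Because $\La^s$ is an integrable module (Theorem \ref{thm_action_fock}), this operator acts semisimply and its inverse square root $\phi_i^{\pm}$ is the well-defined scalar operator used to define the crystal operators in Kashiwara's theory: it acts as a scalar on each of its eigenspaces. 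Since these eigenspaces are $\Ul$-stable, $\phi_i^{\pm}$ commutes with the whole $\Ul$-action, in particular with $\dot{e}_j, \dot{f}_j$ and with $\dot{\phi}_j^{\pm}$. The symmetric argument shows $\dot{\phi}_j^{\pm}$ commutes with all of $\Ue$.

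Granting this, the four commutation identities are immediate. For example,
\[
\te_i\,\tde_j = \phi_i^{+} e_i\,\dot{\phi}_j^{+}\dot{e}_j = \phi_i^{+}\dot{\phi}_j^{+}\, e_i\,\dot{e}_j = \dot{\phi}_j^{+}\phi_i^{+}\,\dot{e}_j\, e_i = \dot{\phi}_j^{+}\dot{e}_j\,\phi_i^{+} e_i = \tde_j\,\te_i,
\]
using that $e_i$ commutes with $\dot{\phi}_j^{+}$, that $\phi_i^{+}$ commutes with $\dot{\phi}_j^{+}$, that $e_i\dot{e}_j = \dot{e}_j e_i$, and finally that $\phi_i^{+}$ commutes with $\dot{e}_j$; the cases $\te_i\tdf_j$, $\tf_i\tde_j$, $\tf_i\tdf_j$ are handled identically. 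The one genuine subtlety --- the step I would flag as the crux --- is precisely this passage from commutation of the Chevalley generators to commutation of the normalizing factors, since $\phi_i^{\pm}$ is not a polynomial in the generators but an operator square root; this is where integrability is indispensable, as it guarantees the semisimplicity and the $\Ul$-stable eigenspace decomposition that make functional calculus preserve the commutation. Everything else is formal.
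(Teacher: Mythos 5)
Your proposal is correct and follows the same route as the paper: both reduce the claim to the commutation of the Chevalley generators of $\Ue$ and $\Ul$ on $\La^s$ given by Theorem \ref{thm_uglov_bis}, via the expressions (\ref{def_crystal_op}). The paper simply calls this ``immediate,'' whereas you carefully justify the one non-formal step --- that the normalizing factors $(qt_i^{\pm1}\De_i)^{-1/2}$, being inverse square roots rather than polynomials in the generators, still commute with the opposite quantum group by functional calculus on the ($\Ul$-stable) eigenspace decomposition guaranteed by integrability --- which is a worthwhile elaboration but not a different argument.
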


\proof
This is immediate since these operators are defined as certain combinations of the Chevalley operators, see Formulas (\ref{def_crystal_op}),
which commute by Theorem \ref{thm_uglov_bis}.
\endproof

\begin{thm}\label{thm_comm_crystal}\
\begin{enumerate}
\item  The conjugate of the crystal basis $(\dL,\dB)$ of $\cF_{\dbs,l}$ at $p=\infty$
is the crystal basis $(\dL',\dB')$ of $\cF_{\dbs',l}$ at $p=0$.
\item Correspondence (\ref{indexation_ug}) identifies $L$ with $\dL$ (respectively $\dL'$) and $B$ with $\dB$.
Under this identification, the crystal operators $\te_i, \tf_i$, $i=0,\dots,e-1$ and $\tde_j,\tdf_j$, $j=0,\dots,l-1$ commute on $L/qL$.
\end{enumerate}
\end{thm}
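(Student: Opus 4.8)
The plan is to deduce both statements from the conjugation isomorphism (\ref{conj}) together with Proposition~\ref{prop_action_conj}, so that essentially no new module-theoretic input is needed beyond Lemma~\ref{lem_comm_crystals}. For part (1), I would start from the fact that $(.)'\colon\La^s\to\La^{-s}$ is anti-linear and sends $p$ to $p^{-1}$; hence it carries the coefficient ring $A_{p^{-1}}$ onto $A_p$ and maps any free $A_{p^{-1}}$-module onto a free $A_p$-module. Applying this to $\dL$ produces a free $A_p$-module $\dL'$ which still generates $\cF_{\dbs',l}$ over $\Q(p)$ (conjugation being bijective) and which still respects the weight decomposition, since conjugation permutes the weight spaces according to the diagram automorphism $j\mapsto -j$. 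The substantive point is stability under the crystal operators, and here I would invoke the $\Ul$-analogue of Proposition~\ref{prop_action_conj}: because conjugation sends addable (respectively removable) $j$-boxes to addable (respectively removable) $(-j)$-boxes while reversing their order, and because the $p$-powers appearing in that proposition are exactly absorbed by the normalising factors $(p\dot{t}_j\dot{\De}_j)^{\mp 1/2}$ of (\ref{def_crystal_op}), conjugation intertwines the operators $\tde_j,\tdf_j$ governing the $p=\infty$ structure with those governing the $p=0$ structure, up to the relabelling $j\mapsto -j$. Since $j\mapsto -j$ is a bijection of $\{0,\dots,l-1\}$, the pair $(\dL',\dB')$ then satisfies every clause of Definition~\ref{def_crystal_basis} at $p=0$, i.e.\ it is the crystal basis of $\cF_{\dbs',l}$ at $p=0$.

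For part (2), I would first pin down the lattices. By Theorem~\ref{thm_crystal_basis} the lattice $L$ is the standard $A_q$-lattice spanned by the basis vectors $|\bla,\bs\rangle$, and by the analogue of that theorem at $p=\infty$ (equivalently, by conjugating via part (1)) $\dL$ is the standard $A_{p^{-1}}$-lattice spanned by the $|\dbla,\dbs\rangle$. Since $p=-q^{-1}$ forces $A_{p^{-1}}=A_q$, Correspondence~(\ref{indexation_ug}), which matches $|\bla,\bs\rangle\leftrightarrow|\dbla,\dbs\rangle$, identifies $L$ with $\dL$ and $B$ with $\dB$ verbatim; conjugating and using (\ref{indexation}) identifies the same lattice with $\dL'$.

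Finally, for the commutation on $L/qL$: from $p^{-1}=-q$ I get $p^{-1}\dL=qL$ as $A_q$-submodules, so the reduction $\dL/p^{-1}\dL$ of the $\Ul$-crystal coincides with the reduction $L/qL$ of the $\Ue$-crystal. Consequently both families $\te_i,\tf_i$ and $\tde_j,\tdf_j$ preserve the common lattice $L=\dL$ and descend to operators on $L/qL$. By Lemma~\ref{lem_comm_crystals} these operators already commute on all of $\La^s$, hence \textit{a fortiori} after reduction modulo $qL$, which is the asserted commutation.

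The step I expect to be the main obstacle is the crystal-operator intertwining in part (1): one must verify that the $p$-power prefactors of Proposition~\ref{prop_action_conj} combine correctly with the $\dot{\De}_j$-normalisation in (\ref{def_crystal_op}), so that conjugation sends $\tde_j$ for the $p=\infty$ structure precisely to $\tde_{-j}$ for the $p=0$ structure and not to some rescaled variant. Everything else reduces to transporting free modules, weight decompositions and bases through an anti-linear bijection and to quoting Lemma~\ref{lem_comm_crystals}.
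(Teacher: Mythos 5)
Your part (2) matches the paper's argument essentially verbatim: the paper likewise sends $L=\bigoplus_{\bla}A_q|\bla,\bs\rangle$ through Correspondence (\ref{indexation_ug}) to $\bigoplus_{\dbla}A_q|\dbla,\dbs\rangle$, notes that $q=-p^{-1}$ forces $A_q=A_{p^{-1}}$ so that this lattice is $\dL$, identifies $L/qL$ with $\dL/p^{-1}\dL$, and quotes Lemma \ref{lem_comm_crystals}. For part (1), however, you take a genuinely different and much heavier route. The paper invokes Theorem \ref{thm_crystal_basis} twice --- in its $p=\infty$ form for $\cF_{\dbs,l}$ and in its $p=0$ form for $\cF_{\dbs',l}$ --- so that \emph{both} $(\dL,\dB)$ and $(\dL',\dB')$ are already known to be the standard lattices and standard bases of their respective Fock spaces; part (1) then reduces to the one-line observation that the conjugation isomorphism sends $|\dbla,\dbs\rangle$ to $|\dbla',\dbs'\rangle$ and $A_{p^{-1}}$ onto $A_p$, hence carries one standard pair onto the other. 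You instead transport the crystal-basis axioms for $(\dL,\dB)$ through conjugation, which forces you to prove that conjugation intertwines $\tde_j$ at $p=\infty$ with $\tde_{-j}$ at $p=0$ --- exactly the step you flag as the main obstacle and do not carry out. That intertwining is true, but checking that the $p$-powers of (the $\Ul$-analogue of) Proposition \ref{prop_action_conj} are absorbed by the $(p\dot{t}_j\dot{\De}_j)^{\mp 1/2}$ normalisation is delicate (the map is anti-linear and $\dot{\De}_j$ involves $\dot{e}_j\dot{f}_j$), and it is not needed: the statement of part (1) only asserts an identification of lattices and bases, not a morphism of crystal graphs, and the $p=0$ analogue of Theorem \ref{thm_crystal_basis} already certifies that the standard pair is the crystal basis of $\cF_{\dbs',l}$. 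So your argument is complete only modulo that computation; the paper's shortcut removes the need for it entirely.
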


\proof 
By Theorem \ref{thm_crystal_basis},
the crystal basis $(\dL,\dB)$ at $p=\infty$ of $\cF_{\dbs,e}$ and
the crystal basis $(\dL',\dB')$ at $p=0$ of $\cF_{\dbs',e}$ are given by 
$$\begin{array}{cclccl}
\dL
&=& \dispst
\bigoplus_{\dbla\in\Pi_e} A_{p^{-1}} |\dbla,\dbs\rangle 
&
\dL'
&=& \dispst
\bigoplus_{\dbla'\in\Pi_e} A_{p^{}} |\dbla',\dbs'\rangle 
\\
\dB
&=&
\left\{ |\dbla,\dbs\rangle \mod p^{-1}\dL \, ; \, \dbla\in\Pi_e \right\} \quad\quad
&
\dB'
&=&
\left\{ |\dbla',\dbs'\rangle \mod p^{}\dL' \, ; \, \dbla'\in\Pi_e \right\}.
\end{array}
$$
One sees that $(\dL',\dB')$ is obtained $(\dL,\dB)$ by applying the conjugation isomorphism, proving the first claim.

Now by definition, Correspondence (\ref{indexation_ug}) sends $L=\bigoplus_{\bla\in\Pi_l} A_{q^{}} |\bla,\bs\rangle$ 
to $$\sL:=\bigoplus_{\dbla\in\Pi_e} A_{q^{}} |\dbla,\dbs\rangle.$$
Because $q=-p^{-1}$, we get $\sL=\dL$.
Similarly, $B$ is sent to $\dB$. 
Identifying $L/qL$ with $\dL/p^{-1}\dL$, the rest of the second claim is straightforward from Lemma \ref{lem_comm_crystals}.
\endproof

Of course, we could have stated the same results with crystal bases at $q=\infty$ (or equivalently at $p=0$) instead.
We decide to work at $q=0$ in the following, and use the crystal bases above.

\medskip

By Theorem \ref{thm_crystal_graph}, we can compute crystal graphs of integrable $\Ul$-modules at $p=0$.
Combining (1) and (2) from the previous theorem
shows that one needs to first
use the formula of \ref{thm_crystal_graph} for the $\Ul$-module $\cF_{\dbs',e}$, and then
use Correspondence (\ref{indexation}) (that is, the conjugation isomorphism in addition to Correspondence (\ref{indexation_ug}))
in order to compute the action of the crystal operators of $\Ul$ on $L/qL$.
This is what we will do in the rest of this paper, and what we illustrate in the following example.

\begin{exa}\label{exa_comm_crystals}
We take the same values as in Example \ref{exa_indexation}.
The action of the different crystal operators of $\Ul$ on $|\dbla',\dbs'\rangle$
is depicted on its abacus as follows:
\begin{figure}[H]
\includegraphics{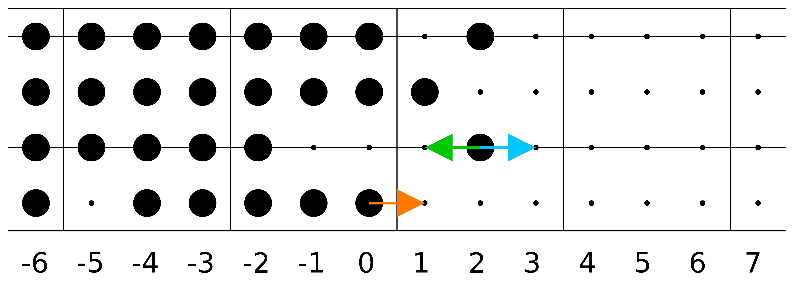}
\caption{The action of $\tdf_0$ (orange),
$\tde_1$ (green) and $\tdf_2$ (blue) on
$\cA(\dbla',\dbs')$}
\label{ab11}
\end{figure}
On the $l$-abacus representing $|\bla,\bs\rangle$,
this gives the following picture:
\begin{figure}[H] 
\includegraphics{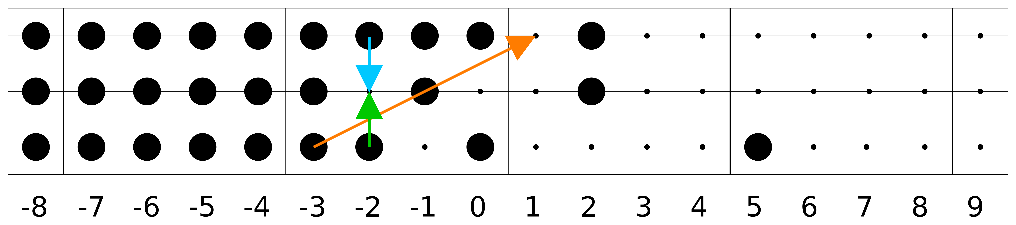}
\caption{The action of $\tdf_0$ (orange),
${\tde_1}$ (green) and ${\tdf_2}$ (blue) on
$\cA(\bla,\bs)$}
\label{ab12}
\end{figure}
Take for instance $|\bmu,\br\rangle = {\tdf_0}|\bla,\bs\rangle$.
\begin{figure}[H] 
\includegraphics{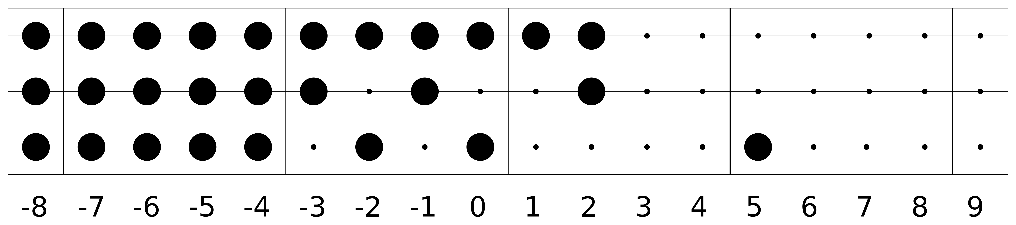}
\caption{The abacus representing the 
$l$-partition $|\bmu,\br\rangle = {\tdf_0}|\bla,\bs\rangle$}
\label{ab13}
\end{figure}
Then the reduced $i$-words ($i=0,1,2,3$) for $|\bmu,\br\rangle$ are
\begin{equation}
\begin{array}{ccl}
\hw_0 & = & ++- \\
\hw_1 & = & +- \\
\hw_2 & = & ++ \\
\hw_3 & = & +-
\end{array}
\end{equation}
These are exactly the $i$-words for $|\bla,\bs\rangle$,
see Example \ref{exa_crystal_op}.
\end{exa}

\begin{rem}\label{rem_comm_crys}
For $l=2$, the action of a dual crystal operator $\tde_j$
is a particular case of \textit{elementary operations}
in the sense of \cite[Section 7.3]{GerberHissJacon2015}.
There is another already known combinatorial procedure resembling the action of $\tde_j$,
namely Tingley's tightening procedure on abaci \cite[Definition 3.8]{Tingley2008}.
The relationship between Tingley's results and the present results will be explained in detail in
the forthcoming \cite{Gerber2016a}.
\end{rem}

\section{Doubly highest weight vertices}\label{doubly_hw}

\subsection{A combinatorial characterisation of the $\Ue$-highest weight vertices}\label{doubly_hw_char}\

According to \cite{JaconLecouvey2012}, the highest weight vertices for the $\Ue$-crystal structure are
precisely the charged $l$-partitions whose abacus is totally periodic.
Respectively, the same holds for the $\Ul$-crystal structure and $e$-partitions.
Let us recall the notion of totally periodic multipartition, cf \cite[Definition 2.2]{JaconLecouvey2012}.

Consider the abacus $\cA$ representing a charged multipartition $|\bla,\bs\rangle$.
The first \textit{$e$-period} in $\cA$ is, if it exists, the sequence 
$$P=((j_1,\be_1), \dots, (j_e,\be_e))$$
of $e$ beads in $\cA$ such that
\begin{itemize}
 \item $\be_1$ is the greatest $\be$-number appearing in $\cA$,
 \item $\be_i = \be_{i-1}-1$ for all $i=2,\dots,e$,
 \item $j_i \leq j_{i-1}$ for all $i=2,\dots,e$,
 \item for all $i=1,\dots,e$, there does not exist $(j_0,\be_i)\in\cA$ such that $j_0\leq j_i$.
\end{itemize}

The first period of $\cA \backslash P$, if it exists, is called the second period of $\cA$.
We define similarly the $k$-th period of $\cA$ by induction.

The abacus $\cA$ is said to be \textit{totally $e$-periodic} if it has 
infinitely many $e$-periods.
In this case, there exists a non-negative integer $N$ such that the abacus obtained 
from $\cA$ by removing its first $N$ periods corresponds to the empty multipartition.
We call an $e$-period $P$ \textit{trivial} if 
$$(j,\be)\in P \quad \Ra \quad (j,\be-c)\in\cA \text{ for all } c\in\Z_{>0}.$$
In other words, a period is trivial if it encodes only size zero parts.

The key property of periods is the following:

\begin{prop}\label{prop_period}
An $e$-period does not contribute to the computation of the 
reduced $i$-words (for all $i=0,\dots,e-1$).
\end{prop}

\begin{proof}
Let $P=((j_1,\be_1), \dots, (j_e,\be_e))$ be the first $e$-period in $\cA=\cA(\bla,\bs)$.
Fix a residue $i\in\{0,\dots,e-1\}$ and look at the $i$-word (respectively reduced $i$-word) $w_i$
(respectively $\hw_i$) for $\cA$ on the one hand,
and the $i$-word (respectively reduced $i$-word) $v_i$
(respectively $\hv_i$) for $\cA\backslash P$ on the other hand.
Let us show that $\hw_i=\hv_i$.

Suppose first that $i=\be_1\mod e$.
Then $(j_1,\be_1)$ corresponds to a rightmost sign $+$ in $w_i$.
Now, either $(j_e,\be_e-1)\in\cA$, in which case $(j_e,\be_e-1)\in\cA\backslash P$,
$(j_e,\be_e-1)$ corresponds to a rightmost sign $+$ in $v_i$;
or $(j_e,\be_e-1)\notin\cA$, in which case $(j_e,\be_e)$ corresponds to a sequence $(-+)$
in $w_i$, which simplifies in $\hw_i$.
So in both cases, $\hw_i=\hv_i$.

Suppose now that $i\neq\be_1\mod e$.
Then there is an element $(j_{k_0},\be_{k_0})\in P$ such that $i=\be_{k_0}\mod e$.
If $j_{k_0+1} = j_{k_0}$, this means that $(j_{k_0},\be_{k_0})$ and $(j_{k_0+1},\be_{k_0+1})$
do not contribute to the computation of $w_i$, and therefore neither in $\hw_i$.
In the case where $j_{k_0+1} < j_{k_0}$, then either
$(j_{k_0},\be_{k_0}-1)\in \cA$, in which case there is a rightmost $+$ in $w_i$, corresponding to $(j_{k_0+1},\be_{k_0+1})$,
which also exists in $v_i$, corresponding to $(j_{k_0},\be_{k_0}-1)\in P$;
or $(j_{k_0},\be_{k_0}-1)\notin \cA$, in which case there is a sequence $(-+)$ in $P$,
corresponding to the beads $(j_{k_0},\be_{k_0})$ and $(j_{k_0+1},\be_{k_0}-1)$,
which simplifies in $\hw_i$.
Again, in both cases, $\hw_i=\hv_i$.
\end{proof}

\begin{thm}[\mbox{\cite[Theorem 5.9]{JaconLecouvey2012}}]\label{thm_hw_vertices}
The charged $l$-partition $|\bla,\bs\rangle$ is a highest weight vertex
in the $\Ue$-crystal if and only if $\cA(\bla,\bs)$ is totally $e$-periodic.
\end{thm}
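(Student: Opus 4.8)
\proof (Strategy.)

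The plan is to first translate the highest-weight condition into a statement about reduced $i$-words, and then play it off against Proposition \ref{prop_period}. By Theorem \ref{thm_crystal_graph} the operator $\tf_i$ adds the good addable $i$-box, so the crystal operator $\te_i$ acts by the inverse move, removing the good removable $i$-box. Hence $|\bla,\bs\rangle$ is a highest weight vertex if and only if $\te_i|\bla,\bs\rangle=0$ for all $i$, that is, if and only if the reduced word $\hw_i(|\bla,\bs\rangle)$ contains no sign $-$ for every $i\in\{0,\dots,e-1\}$. Recalling that a $-$ records a bead that can slide one step to the left, i.e.\ a removable box, I would reduce the theorem to the equivalence ``$\cA(\bla,\bs)$ is totally $e$-periodic $\Leftrightarrow$ no reduced $i$-word has a surviving $-$''.

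For the implication ``totally periodic $\Rightarrow$ highest weight'', I would let Proposition \ref{prop_period} do all the work. If $\cA=\cA(\bla,\bs)$ is totally $e$-periodic, then by definition there is an integer $N$ such that deleting the first $N$ periods $P_1,\dots,P_N$ produces the abacus of the empty multipartition. Applying Proposition \ref{prop_period} successively to $\cA$, to $\cA\backslash P_1$, and so on, shows that the reduced $i$-words of $\cA$ coincide with those of the empty multipartition, for every $i$. Since the empty multipartition has no removable box, each of its reduced words consists of $+$ signs only; hence the same holds for $\cA$, and $|\bla,\bs\rangle$ is a highest weight vertex.

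For the converse I would argue by contraposition: assuming $\cA$ is \emph{not} totally $e$-periodic, I construct a surviving $-$. Running the greedy construction of periods and deleting each one (harmless by Proposition \ref{prop_period}), the failure of total periodicity means that after finitely many deletions I reach a nonempty abacus whose \emph{first} $e$-period does not exist; by Proposition \ref{prop_period} it then suffices to produce a surviving $-$ there. So suppose the greedy chain $(j_1,\be_1),(j_2,\be_2),\dots$ breaks at step $i\in\{2,\dots,e\}$, meaning there is no bead at column $\be_{i-1}-1$ in any row $\le j_{i-1}$. In particular the spot immediately to the left of the bead $(j_{i-1},\be_{i-1})$ is empty, so this bead slides left and encodes a removable box $\ga$ of content $\be_{i-1}-1$ and residue $i_0:=(\be_{i-1}-1)\bmod e$.

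The hard part will be showing that $\ga$ is \emph{good}, i.e.\ that its $-$ survives the reduction of $w_{i_0}$. The key computation I would use is that, since the chain broke at step $i\le e$, one has $\be_1=\be_{i-1}+(i-2)$, so the next $i_0$-content above that of $\ga$, namely $(\be_{i-1}-1)+e=\be_{i-1}+(e-1)$, strictly exceeds the greatest $\be$-number $\be_1$; as no bead occupies a column beyond $\be_1$, there is no addable nor removable $i_0$-box of content greater than $\cont(\ga)$. At content exactly $\cont(\ga)$, the failure condition forbids any bead at column $\be_{i-1}-1$ in rows $\le j_{i-1}$, ruling out addable $i_0$-boxes weakly below row $j_{i-1}$; by the tie-breaking rule of the order $<$ (larger component is smaller), every $i_0$-box of content $\cont(\ga)$ that is $\ge\ga$ is then forced to be removable. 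Thus the greatest $i_0$-box is a $-$, it sits at the right end of $w_{i_0}$, and it cannot be cancelled by any $+$ to its right; so $\hw_{i_0}$ retains a $-$ and $|\bla,\bs\rangle$ is not a highest weight vertex. The only genuinely delicate point is this last bookkeeping along $<$ at the single content $\cont(\ga)$; everything else is forced by the maximality built into the period construction.
\endproof
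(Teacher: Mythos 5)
Your argument is correct, but note that the paper does not actually prove this statement: it imports it wholesale from \cite[Theorem 5.9]{JaconLecouvey2012} and only supplies Proposition \ref{prop_period} as an auxiliary fact used elsewhere. What you have done is assemble a self-contained proof from exactly that proposition, which is a genuinely different (and more informative) route than the paper's citation. The forward implication is clean: iterating Proposition \ref{prop_period} along the greedy period extraction reduces the reduced $i$-words to those of the empty multipartition, which contain no $-$ (here you are implicitly using, as you may, the paper's stated fact that a totally periodic abacus becomes the empty one after removing finitely many periods; you are also using the standard criterion ``highest weight $\Leftrightarrow$ no surviving $-$ in any $\hw_i$'', which silently corrects the sign-swap in the paper's definition of good boxes). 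For the converse, the delicate step you flag does check out: since the chain breaks at step $i\le e$, the content $\cont(\ga)+e$ exceeds $\be_1$, so no $i_0$-box of larger content exists; and at content $\cont(\ga)$, minimality of $j_{i-1}$ among rows carrying a bead in column $\be_{i-1}$ together with the failure condition in column $\be_{i-1}-1$ shows there is in fact \emph{no} $i_0$-box strictly greater than $\ga$ at all, so $\ga$ contributes the terminal letter of $w_{i_0}$, which is a $-$ and can never be cancelled. One small point of hygiene worth making explicit if you write this up: after deleting periods, the resulting subset of $\{1,\dots,l\}\times\Z$ is still the abacus of a charged multipartition (each row remains bounded above and cofinite below), so the $i$-words of the intermediate abaci are well defined and Proposition \ref{prop_period} may legitimately be iterated.
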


This result also holds by switching $e$ and $l$ and replacing $q$ by $p$.

\begin{cor} The charged partition $|\la,s\rangle$ is a
highest weight vertex in both the $\Ue$-crystal and the $\Ul$-crystal
if and only if:
\begin{enumerate}
 \item $\cA(\bla,\bs)$ is totally $e$-periodic, and
 \item $\cA(\dbla',\dbs')$ is totally $l$-periodic.
\end{enumerate}
Such an element is called a \textit{doubly highest weight vertex}.
\end{cor}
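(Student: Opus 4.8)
The plan is to reduce the statement to two independent invocations of Theorem \ref{thm_hw_vertices}, one per quantum group, by reading the single vertex $|\la,s\rangle$ through the two labellings supplied by Correspondence (\ref{indexation}). The content is essentially bookkeeping: once one knows which charged multipartition indexes $|\la,s\rangle$ in each of the two crystals, the periodicity criterion is read off directly.

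First I would handle the $\Ue$-side. By Theorem \ref{thm_comm_crystal}, the $\Ue$-crystal on $\La^s$ is identified, through the map $\tau$ of Correspondence (\ref{indexation}), with the $\Ue$-crystal at $q=0$ on $\bigoplus_{\bs}\cF_{\bs,e}$, sending $|\la,s\rangle$ to $|\bla,\bs\rangle$. Since the $\Ue$-action preserves the charge, being a source in $\La^s$ is the same as being a source in the summand $\cF_{\bs,e}$; hence $|\la,s\rangle$ is a $\Ue$-highest weight vertex if and only if $|\bla,\bs\rangle$ is one in $\cF_{\bs,e}$. By Theorem \ref{thm_hw_vertices} this is equivalent to $\cA(\bla,\bs)$ being totally $e$-periodic, which is condition (1).

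Next I would treat the $\Ul$-side by the same scheme, with one point requiring care. The relevant $e$-partition labelling of $|\la,s\rangle$ is the \emph{new} one of Correspondence (\ref{indexation}), which passes through the conjugation isomorphism (\ref{conj}) before applying $\dtau$, so that $|\la,s\rangle$ corresponds to $|\dbla',\dbs'\rangle$. The role of Theorem \ref{thm_comm_crystal}(1) is exactly to guarantee that the $\Ul$-crystal thereby carried by $|\dbla',\dbs'\rangle$ is genuinely the crystal at $p=0$ of $\cF_{\dbs',l}$, namely the conjugate of the crystal at $p=\infty$ of $\cF_{\dbs,l}$. Consequently $|\la,s\rangle$ is a $\Ul$-highest weight vertex if and only if $|\dbla',\dbs'\rangle$ is a highest weight vertex in the $\Ul$-crystal at $p=0$ of $\cF_{\dbs',l}$, and the version of Theorem \ref{thm_hw_vertices} obtained by exchanging $e$ and $l$ (and $q$ by $p$) then says this holds if and only if $\cA(\dbla',\dbs')$ is totally $l$-periodic, which is condition (2).

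Conjoining the two equivalences gives the corollary. The only genuine subtlety, such as it is, sits on the $\Ul$-side: one must check that it is legitimate to feed $|\dbla',\dbs'\rangle$ (rather than $|\dbla,\dbs\rangle$) into the $e \leftrightarrow l$ form of Theorem \ref{thm_hw_vertices}, and this is precisely what Theorem \ref{thm_comm_crystal}(1) secures by matching the conjugated crystal basis at $p=\infty$ with the crystal basis at $p=0$. Everything else amounts to unwinding Correspondence (\ref{indexation}).
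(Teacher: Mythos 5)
Your proof is correct and follows exactly the paper's route: the paper's own argument is the one-line observation that the corollary is a direct consequence of Theorem \ref{thm_hw_vertices} together with Theorem \ref{thm_comm_crystal}, and your write-up simply makes explicit the bookkeeping (in particular the role of the conjugated indexation $|\dbla',\dbs'\rangle$ on the $\Ul$-side) that the paper leaves implicit.
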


\proof
This is a direct consequence of Theorem \ref{thm_hw_vertices}
together with Theorem \ref{thm_comm_crystal}.
\endproof

\begin{exa} \label{exa_doubly_hw}
Take $l=3$, $e=2$, $\bla = (3,3.1,1)$ and $\bs=(-1,0,0)$.
Then we have $\dbla' = (2^2.1^3,2.1^3)$ and $\dbs' = (0,1)$.
\begin{figure}[H] 
\includegraphics{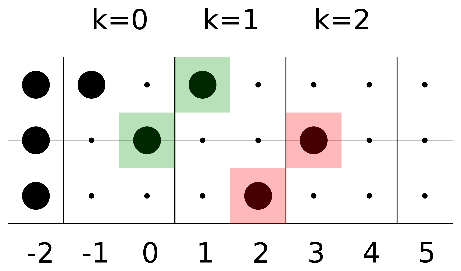}
\caption{The abacus $\cA(\bla,\bs)$}
\label{ab9}
\end{figure}
One sees that $\cA(\bla,\bs)$ has two $e$-periods, and is totally $e$-periodic.
The first period corresponds to parts of size $3$ (colored in red)
and the second to parts of size $1$ (colored in green).

\begin{figure}[H] 
\includegraphics{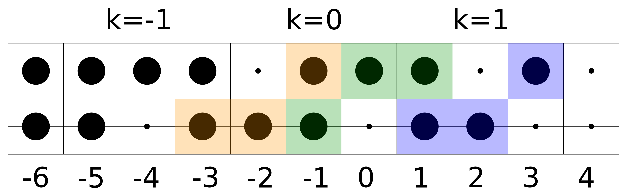}
\caption{The abacus $\cA(\dbla',\dbs')$}
\label{ab10}
\end{figure}
Similarly, $\cA(\dbla',\dbs')$ has three $l$-periods, and is totally $l$-periodic.
The first period corresponds to parts of size $2$ (blue) and the next two to parts of size $1$ (green, orange).
The other periods are trivial.

Therefore, the associated charged partition $|\la,s\rangle = |(10.8.4.2),-1\rangle$
is a doubly highest weight vertex.
\end{exa}

\subsection{Properties of doubly highest weight vertices}\label{doubly_hw_properties}\

We now list some properties of such charged partitions.
In what follows, we let $|\la,s\rangle$ be a doubly highest weight vertex.

\begin{lem}\label{lem_double_2}\
\begin{enumerate}
 \item In $\cA(\bla,\bs)$ (respectively $\cA(\dbla',\dbs')$),
 all beads of a given period correspond to parts of the same size.
 We then denote $S_k$ (respectively $\dS_k$) the part size corresponding to the $k$-th period
 in $\cA(\bla,\bs)$ (respectively $\cA(\dbla',\dbs')$).
 \item Let $N$ (respectively $\dN$) be the number
 of non-trivial periods in $\cA(\bla,\bs)$ (respectively $\cA(\dbla',\dbs')$).
 We have $\#\{ S_k \, ; \, 1\leq k \leq N \} = \#\{ \dS_k \, ; \, 1\leq k \leq \dN \}$.
\end{enumerate}
\end{lem}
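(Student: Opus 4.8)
The plan is to argue entirely on the abacus $\cA=\cA(\bla,\bs)$, using the basic dictionary recalled in Section \ref{abaci}: the part attached to a bead at position $\be$ on row $j$ is the number of empty positions lying to the left of that bead on row $j$. Since the $k$-th period is by definition the first period of the abacus obtained after deleting the previous periods, it suffices to treat the first period $P=((j_1,\be_1),\dots,(j_e,\be_e))$, where $\be_r=\be_{r-1}-1$ and $j_r\le j_{r-1}$. I would read $P$ from right to left and compare the parts of two consecutive beads $(j_{r-1},\be_{r-1})$ and $(j_r,\be_r)$. When $j_r=j_{r-1}$ the two beads occupy consecutive positions of a single row, so there is no empty position between them and the two left–gap counts coincide: the parts are equal, and this case is immediate. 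The remaining case, a genuine descent $j_r<j_{r-1}$, is the technical heart of $(1)$.

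The clean way to deal with all steps simultaneously is to show that a period is \emph{rigid} under the left shift: moving every bead of $P$ one position to the left is a collision-free move whose result is again a totally $e$-periodic abacus having the shifted $P$ as its first period, and representing the multipartition obtained from $\bla$ by deleting one box from the part of each bead of $P$. Granting this, one iterates: by Theorem \ref{thm_hw_vertices} the abacus is totally $e$-periodic, so after finitely many periods the multipartition is empty; hence repeated shifting eventually makes $P$ trivial, and because each shift strips exactly one box from each of the $e$ beads, they all reach size zero after the same number of shifts. Therefore the original parts were equal. Proving the rigidity — that no collision occurs at a descent $j_r<j_{r-1}$ (using that the last, extremality, condition in the definition of an $e$-period forces position $\be_{r-1}$ to be empty on row $j_r$) and that the shifted configuration is again an $e$-period — is the step I expect to be the main obstacle, and is exactly where a careful analysis of the definition is required.

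For $(2)$, part $(1)$ shows that the period sizes, listed in the order of the periods, are weakly decreasing (the first period carries the rightmost beads, hence the largest parts), so they assemble into genuine partitions $\ka=(S_1\ge\dots\ge S_N)$ for $\cA(\bla,\bs)$ and $\dka=(\dS_1\ge\dots\ge\dS_{\dN})$ for $\cA(\dbla',\dbs')$; moreover $\#\{S_k\}$ and $\#\{\dS_k\}$ are precisely the numbers of distinct parts of $\ka$ and $\dka$. The statement then follows from the sharper fact that $\dka=\ka'$, i.e.\ the two period–partitions are conjugate. I would establish this by tracing the period structure through Correspondence (\ref{indexation}): since $T=\dtau\circ(\cdot)'\circ\tau^{-1}$ is built from the diagonal flip $\dtau\circ\tau^{-1}$ of the defining rectangles (Figure \ref{ab16}) followed by the conjugation $|\la,s\rangle\mapsto|\la',-s\rangle$, it exchanges the horizontal stacking ($\tau$, width $e$) with the vertical one ($\dtau$, width $l$) and transposes each Young diagram. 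Consequently it carries the $e$-period staircases of $\cA(\bla,\bs)$ to the $l$-period staircases of $\cA(\dbla',\dbs')$ with the roles of part–multiplicity and part–size interchanged, which is exactly the transpose on the partition recording period sizes; hence $\dka=\ka'$. Since conjugate partitions have the same number of distinct parts (both count the outer corners of the common Young diagram), we obtain $\#\{S_k\}=\#\{\dS_k\}$.

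Throughout, the genuine difficulty is the bookkeeping hidden inside Correspondence (\ref{indexation}): in $(1)$ that a period slides as a rigid block, and in $(2)$ that $T$ realises the transpose at the level of period–partitions. Both rest on the extremality condition in the definition of an $e$-period together with the diagonal-flip description of $\dtau\circ\tau^{-1}$, and it is in verifying these two points that the argument becomes technical.
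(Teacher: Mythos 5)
There is a genuine gap in your proof of part (1), and it is located exactly at the step you defer as ``the main obstacle'': the rigidity claim is \emph{false} under the hypotheses you allow yourself. Your argument for (1) uses only the total $e$-periodicity of $\cA(\bla,\bs)$ and never invokes the total $l$-periodicity of $\cA(\dbla',\dbs')$; but statement (1) is simply not true for a merely totally $e$-periodic abacus. Take $l=e=2$, $\bla=(\emptyset,4)$, $\bs=(4,1)$. Row $1$ has beads at all positions $\leq 4$ and row $2$ has beads at $5,0,-1,-2,\dots$; this abacus is totally $2$-periodic (after removing the first period one already has an empty bipartition), yet its first period is $P=((2,5),(1,4))$, whose beads carry the parts $4$ and $0$. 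The same example kills the rigidity claim: shifting $P$ one step left would move $(1,4)$ onto the occupied spot $(1,3)$, so the shift is not collision-free, and the extremality condition in the definition of a period (which only controls positions $\be_i$ on rows below $j_i$, not the positions $\be_e-1$ or $\be_i-1$ on the rows of $P$ itself) cannot rule this out. Collision-freeness of the left shift of a period is genuinely a consequence of the \emph{double} periodicity --- it is established in the paper only after Lemma \ref{lem_double_2}, in the discussion preceding Lemma \ref{lem_double_4}, and it uses Lemma \ref{lem_double_2} as input. The paper's proof of (1) goes the other way: it argues by induction on the number of non-trivial $e$-periods, locating the rightmost ``hole'' $(j_0,c_0)$, observing that the empty spots immediately to the left of the beads of the period $P$ through $(j_0,c_0+1)$ correspond to beads of $\cA(\dbla',\dbs')$ and hence, by total $l$-periodicity, must form a \emph{complete} $l$-period there; iterating this produces blocks $Q_1,\dots,Q_t$ whose removal yields a smaller doubly highest weight vertex. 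That is the mechanism that forces all beads of $P$ to have the same number of empty spots to their left, and it has no substitute on the $l$-abacus alone.

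For part (2), your route --- establish $\dka=\ka'$ and note that conjugate partitions have the same number of distinct parts --- is sound in principle, but $\dka=\ka'$ is precisely Proposition \ref{prop_kappa_1} of the paper, which is proved \emph{after} and \emph{from} Lemma \ref{lem_double_2}: its proof needs part (1) for both abaci together with the period-to-period correspondence that the inductive argument above constructs. Your sketch of why $T$ ``carries $e$-period staircases to $l$-period staircases with size and multiplicity interchanged'' is the right picture, but as written it presupposes exactly the matching of non-trivial periods of $\cA(\bla,\bs)$ with those of $\cA(\dbla',\dbs')$ that constitutes the content of the lemma. To repair the proposal you should restructure it around the double hypothesis from the start, in the spirit of the paper's induction, rather than treating the two abaci separately.
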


\proof
We prove this by induction on $n=N$. Write $\cA=\cA(\bla,\bs)$.
Suppose first that $n=0$.
Then we also have $\dN=0$ since 
$|\dbla',\dbs'\rangle$ is totally $l$-periodic, and both statements hold.
Let now $n>0$ and suppose that the lemma is true for all charged multipartitions
whose number of non-trivial $e$-periods is smaller than $n$.
Since $n\geq 1$, there is at least a position $(j,c)\notin\cA$
such that $(j,c+1)\in\cA$. 
Consider the position $(j_0,c_0)$ verifying this property, such that $c_0$ is maximal and such that
$j_0$ is maximal among $\{ (j,c_0)\notin\cA \, | \, (j,c_0+1)\in\cA \}$
Then $(j_0,c_0+1)\in\cA$ and therefore belongs to an $e$-period $P$ of $\cA$, since
$\cA$ is totally $e$-periodic.
Let $Q=((j_1,c_1),\dots, (j_r,c_r))$ be the sequence of empty positions in $\cA(\bla,\bs)$
located directly to the left of the beads of $P$.
In particular, $(j_1,c_1)=(j_0,c_0)$.
Since $\dot{\cA}=\cA(\dbla',\dbs')$ is totally $l$-periodic
and since all elements of $Q$ correspond to beads in $\dot{\cA}$, 
$Q$ corresponds to a whole $l$-period in $\dot{\cA}$,
$r=l$ and $j_k=l-k+1$ for all $k=1,\dots l$.
Similarly, if there is an empty spot directly to the left of a position in the set $Q_1=Q$,
this yields a set $Q_2$ of $l$ beads corresponding to an $l$-period in $\dot{\cA}$.
Iterating, we get sets $Q_k$ for $k=1,\dots,t$ where $t$ is the part size encoded by, say, 
the first bead of $P$.
Consider the abacus $\cA'=\cA\backslash (P_1\cup\dots\cup P_u)$ where
$P_k$ is the $k$-th $e$-period of $\cA$ and $P_u=P$.
This is mapped to $\dot{\cA}'=\dot{\cA}\backslash(Q_1\cup\dots\cup Q_t)$ by Correspondence (\ref{indexation}),
and therefore is again a doubly highest weight vertex, whose
number of non-trivial $e$-periods is smaller than $n$.
By induction hypothesis, both points of the lemma hold for $\cA'$ and $\dot{\cA}'$.
It is straightforward that adding $P_1\cup\dots\cup P_u$ to $\cA'$ 
(which corresponds to addin $Q_1\cup\dots\cup Q_t$ to $\dot{\cA}'$)
preserves these properties, thus proving the claim for $|\bla,\bs\rangle$.
\endproof

\begin{cor}\label{cor_double_3}\
\begin{enumerate}
\item The multiplicity of each part in $\bla$ (respectively $\dbla'$) is divisible by $e$ (respectively $l$).
\item The rank of $\la$ is divisible by $el$.
\end{enumerate}
\end{cor}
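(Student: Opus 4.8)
The plan is to deduce both statements from Lemma \ref{lem_double_2} together with the $e$-quotient interpretation of $\dtau$ recorded in Remark \ref{rem_ribbon}. For part (1), since $|\bla,\bs\rangle$ is a doubly highest weight vertex, $\cA(\bla,\bs)$ is totally $e$-periodic, so the beads encoding its non-zero parts are precisely those lying in the $N$ non-trivial $e$-periods, and each such period consists of exactly $e$ beads. By Lemma \ref{lem_double_2}(1) all $e$ beads of the $k$-th period encode parts of the \emph{same} size $S_k$. Hence, for a fixed $t>0$, the number of parts of $\bla$ equal to $t$ equals $e\cdot\#\{k\le N\,;\,S_k=t\}$, which is divisible by $e$. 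Applying the identical argument to $\cA(\dbla',\dbs')$, whose non-trivial $l$-periods each contain $l$ beads encoding parts of size $\dS_k$, shows that every part multiplicity of $\dbla'$ is divisible by $l$.

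For part (2) I would first extract from part (1) that $l\mid|\dbla'|$: indeed $|\dbla'|=\sum_{t\ge1}t\,m_t$, where the multiplicity $m_t$ of $t$ in $\dbla'$ is divisible by $l$, so in fact $|\dbla'|=l\sum_{k=1}^{\dN}\dS_k$. Next, by Correspondence (\ref{indexation}) and Remark \ref{rem_ribbon}, $\dbla'$ is the $e$-quotient of $\la'$ and $\dbs'$ encodes its $e$-core; since conjugation preserves both ranks and $e$-cores, this gives $|\la|=|\la'|=|\mathrm{core}_e(\la)|+e|\dbla'|$. As $el\mid e|\dbla'|$, everything reduces to showing that $\mathrm{core}_e(\la)$ is empty.

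This last point is the main obstacle, and I would settle it by the same inductive scheme used in the proof of Lemma \ref{lem_double_2}. Removing the top $e$-periods from $\cA(\bla,\bs)$ corresponds, via Correspondence (\ref{indexation}), to removing $l$-periods from $\cA(\dbla',\dbs')$; the latter operation only deletes boxes of the $e$-quotient $\dbla'$ while keeping the charge $\dbs'$ fixed, hence amounts to stripping $e$-ribbons from $\la'$ and leaves $\mathrm{core}_e(\la')$ unchanged. Iterating down to the empty multipartition, whose associated partition is $\emptyset$ and has empty $e$-core, I would conclude that $\mathrm{core}_e(\la)=\mathrm{core}_e(\la')=\emptyset$, whence $|\la|=e|\dbla'|=el\sum_{k=1}^{\dN}\dS_k$ is divisible by $el$. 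The delicate part is verifying that the period-removal step of Lemma \ref{lem_double_2} really translates into a \emph{charge-preserving} box removal on the $e$-quotient side, so that the $e$-core is genuinely invariant along the reduction; once that is granted, the remainder is bookkeeping.
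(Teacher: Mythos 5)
Your proof of part (1) is correct and is essentially the paper's own argument: by Lemma \ref{lem_double_2}(1) the non-zero parts of $\bla$ are read off the non-trivial $e$-periods, each of which contributes $e$ equal parts, and likewise for $\dbla'$ with $l$-periods. For part (2), your reduction is also the right way to make the paper's one-line argument precise: writing $|\la|=|\la'|=|\mathrm{core}_e(\la')|+e|\dbla'|$ and invoking $l\mid|\dbla'|$ from part (1), everything comes down to controlling the rank of the $e$-core.

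The gap is in your resolution of that last point. Your induction terminates at the empty multipartition $|\bemp,\bs\rangle$, and you assert that its associated charged partition is $|\emptyset,s\rangle$; this is not true in general. Under Correspondence (\ref{indexation}), $|\bemp,\bs\rangle$ with $\bs\in A(s)$ corresponds to $|\dbemp,\dbs'\rangle$ (cf.\ Proposition \ref{prop_btildekappa_1}), so the intermediate partition is exactly the $e$-core cut out by the charge $\dbs'$, and this vanishes only for the one balanced charge with $\dtau(\cA(\emptyset,-s))=\cA(\dbemp,\dbs')$; since $\dA(s)$ allows the components of $\dbs'$ to spread over a window of width $l$, the core is typically nonempty. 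Concretely, for $l=2$, $e=3$, $\bs=(-1,1)\in A(0)$ one computes $|\bemp,(-1,1)\rangle\leftrightarrow|(1),0\rangle\leftrightarrow|\dbemp,(-1,0,1)\rangle$, both abaci are totally periodic, and $(1)$ is a nonempty $3$-core of rank $1$ --- so the quantity $|\mathrm{core}_e(\la')|$ you need to kill does not vanish and the induction cannot close. (You have correctly located the crux: the paper's own proof deduces $el\mid|\la'|$ from $l\mid|\dbla'|$ without addressing the core contribution at all; but the fix you propose does not repair this, and any correct treatment must confront the rank of the $e$-core attached to $\dbs'$ directly.) A secondary imprecision: ``removing'' periods as in the proof of Lemma \ref{lem_double_2} deletes beads and therefore changes the charge; to keep $\dbs'$, and hence the $e$-core, invariant along the reduction you must shift periods to the left (as the maps $\tb_{-1}$ do) rather than delete them.
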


\proof \
\begin{enumerate}
\item This is straightforward from Lemma \ref{lem_double_2} (1), since
each part of $\bla$ (respectively $\dbla'$)
is read off the abacus by looking at each bead of the non-trivial periods.
\item By Point (1) above, the multiplicity of each part in $\dbla'$ is divisible by $l$.
Using Remark \ref{rem_ribbon} and the definition of the correspondence \ref{indexation}, 
we can see $\la'$ as the partition with $e$-quotient $\dbla'$ and $e$-core determined by $\dbs'$.
Therefore the rank of $\la'$ is divisible by $el$, hence so is that of $\la$.
\end{enumerate}
\endproof

Recall that we have defined the domains $A(s)$ and $\dA(s)$ in Notation \ref{fund_dom}.

\begin{prop}\label{prop_charge}
We have
$$ \bs\in A(s) \mand
\dbs'\in \dA(s).
$$
\end{prop}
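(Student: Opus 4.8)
The plan is first to recast the two membership conditions as geometric statements about the abaci, and then to peel off the periods. Concretely, $\bs\in A(s)$ asserts that $s_1\leq\dots\leq s_l$ together with $s_l<s_1+e$, and on the abacus $\cA(\bemp,\bs)$ these say exactly that the rightmost bead of row $j$ moves weakly to the right as $j$ increases and that all these rightmost beads lie in a window of $e$ consecutive positions; the dual statement describes $\dbs'\in\dA(s)$ on $\cA(\dbemp,\dbs')$. So I would begin by reducing to the empty multipartition. Using the period-removal operation from the proof of Lemma \ref{lem_double_2} --- which shifts the beads of each non-trivial $e$-period to the left and thereby deletes the corresponding parts --- one passes from $|\bla,\bs\rangle$ to $|\bemp,\bs\rangle$ \emph{without altering the charge} $\bs$, since the number of beads on each row is unchanged. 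By the correspondence between $e$-periods and $l$-periods established there, the same operation simultaneously carries $|\dbla',\dbs'\rangle$ to $|\dbemp,\dbs'\rangle$ and leaves $\dbs'$ fixed, and, as observed in that proof, it preserves the doubly highest weight property. Thus it suffices to prove the statement when $\bla=\bemp$ and $\dbla'=\dbemp$.

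For this base case I would argue directly through Correspondence (\ref{indexation}). Writing $|\la,s\rangle=\tau^{-1}|\bemp,\bs\rangle$, total $e$-periodicity of $\cA(\bla,\bs)$ is automatic for $\bemp$, so the genuine content is that $\cA(\dbemp,\dbs')$ be totally $l$-periodic; transported back through conjugation and $\dtau$, this is a constraint on $\la$. The idea is to compute $\bs$ by packing all beads of $\cA(\bemp,\bs)$ to the left and reading off the rightmost beads row by row, and to recognise the ordering $s_1\leq\dots\leq s_l$ as the non-increasing condition $j_i\leq j_{i-1}$ on the rows occurring in the $e$-periods of $\cA(\bla,\bs)$, while the dual ordering for $\dbs'$ comes from the $l$-periods of the conjugate abacus. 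Since these two orderings are two readings --- through $\tau$ and through $\dtau$ --- of one and the same packed configuration, they are forced to be mutually consistent.

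The step I expect to be the main obstacle is the \emph{width} inequalities $s_l<s_1+e$ and $\ds'_e<\ds'_1+l$, rather than the orderings. The orderings follow from the period conditions on a single abacus, but the width bounds cannot be extracted from total $e$-periodicity alone (the empty abacus is totally $e$-periodic for every charge); they genuinely require total $l$-periodicity of the conjugate abacus at the same time. Concretely, I would fix the first period of $\cA(\bemp,\bs)$, whose $e$ consecutive positions $\be_1,\dots,\be_1-e+1$ pick out rightmost beads of the rows in non-increasing order, and transport it through Correspondence (\ref{indexation}) to a full $l$-period of $\cA(\dbemp,\dbs')$; demanding that the transported configuration again satisfy the period conditions (non-increasing rows, lowest bead at each position) forces the $e$-window to reach the rightmost bead of \emph{every} row, which is precisely $s_l-s_1<e$, and the symmetric argument on the dual side gives $\ds'_e-\ds'_1<l$. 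The delicate bookkeeping is the tracking of residues and row indices through the conjugation and through the two bijections $\tau$ and $\dtau$, and it is exactly the careful analysis of Correspondence (\ref{indexation}) developed earlier in this section that makes this feasible.
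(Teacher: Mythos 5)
Your reduction to the empty multipartition is sound and is essentially the paper's first step: by Lemma \ref{lem_double_2} a doubly highest weight vertex is obtained from $|\bemp,\bs\rangle$ by shifting whole $e$-periods (which preserves both charges and, by the correspondence between $e$-periods and $l$-periods, the doubly highest weight property), so it suffices to treat $|\bemp,\bs\rangle$. The gap is in the base case. You assert that the ordering $s_1\leq\dots\leq s_l$ can be read off from the $e$-period conditions on $\cA(\bemp,\bs)$ itself, but --- as your own parenthesis concedes --- the abacus of the empty $l$-partition is totally $e$-periodic for \emph{every} charge, all of its periods being trivial, so total $e$-periodicity of $\cA(\bemp,\bs)$ imposes no condition on $\bs$ whatsoever: neither the ordering nor the width bound comes from that side, and both must be extracted from total $l$-periodicity of the dual $e$-abacus. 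Compounding this, your description of the first $e$-period of $\cA(\bemp,\bs)$ as picking out the rightmost beads of the rows in non-increasing order is false in general: for $l=e=2$ and $\bs=(5,0)$ the first period is $((1,5),(1,4))$ and never meets row $2$. That description is only accurate once $\bs$ already lies in $A(s)$, so your transport argument for $s_l<s_1+e$ presupposes what it is meant to prove.

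What is actually needed --- and what the paper supplies --- is a contrapositive argument carried out entirely on the dual side: if $\bs\notin A(s)$, then either $s_j>s_{j+1}$ for some $j$, or $s_{j'}<s_j-e$ for some $j'<j$, and in either case the resulting gaps in the packed $l$-abacus become, via the conjugation built into Correspondence (\ref{indexation}), beads of $\cA(\dbla',\dbs')$ that cannot be organised into $l$-periods, contradicting total $l$-periodicity. Your sketch never produces such an obstruction, so the base case is not established.
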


\proof
Recall that the multicharge is read from the abacus by
shifting all beads to the left and looking at the index of the rightmost bead in each row
of the resulting abacus.
By Lemma \ref{lem_double_2}, all periods in a doubly highest weight vertex correspond to the same part.
Therefore, a doubly highest weight charged $l$-partition $|\bla,\bs\rangle$  is obtained from 
$|\bemp,\bs\rangle$ by shifting whole $e$-periods to the right.
This is a key fact and will be used in what comes next.
Therefore, it suffices to prove the claim for vertices of the form $|\bla,\bs\rangle$.
In this case,
let us observe the corresponding charged $e$-partition $|\dbla',\dbs'\rangle$ defined via Formula (\ref{indexation}).
By definition of $T$, if $\bs\notin A(s)$, then: 
\begin{itemize}
\item either there exists and index $j$ such that $s_j>s_{j+1}$, in which case the difference $\de=s_j-s_{j+1}$
creates $\de$ empty spots in the $l$-abacus which are beads in the $e$-abacus but do not form an $l$-period,
\item or there exists two indices $j$ and $j'$ such that $j'<j$ and $s_{j'}<s_j-e$, in which case
the difference $s_j-s_{j'}$ also gives beads in the corresponding $e$-abacus which do not form an $l$-period.
\end{itemize}
In both cases, $\cA(\dbla',\dbs')$ is not totally $l$-periodic, which is a contradiction, so the claim is proved.
\endproof

\subsection{Shifting periods in abaci}

We now consider the crucial procedure of shifting periods one step to the left.
Let $P$ be an $e$-period in $\cA(\bla,\bs)$ which is shiftable one step to the left.
By Lemma \ref{lem_double_2} above, it is equivalent to say that there exists
$(j,\be)\in P$ such that $(j,\be-1)\notin\cA(\bla,\bs)$ 
(i.e. there is one empty spot left adjacent to some bead in $P$).
Then by the same observation as in the proof of Lemma \ref{lem_double_2},
there is a corresponding $l$-period $P'$ in $\cA(\dbla',\dbs')$ which is shiftable
one step to the left.
Define then
\begin{equation}\label{shift}
\begin{array}{cccc}
\varphi_{P}: &
\cA(\bla,\bs) & \lra &
\{1,\dots,l\}\times\Z \\
& (j,c) & \longmapsto &
\left\{
\begin{array}{ll}
(j,c-1) & \text{ if } (j,c)\in  P \\
(j,c) & \text{ otherwise.}
\end{array}
\right.
\end{array}
\end{equation}
The image of $\cA(\bla,\bs)$ under $\varphi_{P}$ is the $l$-abacus obtained from 
$\cA(\bla,\bs)$ by shifting $P$ one step to the left.
We define $\dvarphi_{P'}$ similarly, that is to say, the map shifting $P'$ one step to the left
in the $e$-abacus $\cA(\dbla',\dbs')$.

\begin{lem}\label{lem_double_4}\
We have
$$\varphi_{P} = \dT\circ\dvarphi_{P'}\circ T.
$$
\end{lem}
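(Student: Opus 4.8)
The plan is to reduce the claimed identity to a statement about how $T$ transports the one-step leftward shift of a period, and then to settle that statement using the bead--hole duality underlying conjugation together with the period matching already extracted in the proof of Lemma~\ref{lem_double_2}.

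First, since $\dT=T^{-1}$, the identity $\varphi_{P}=\dT\circ\dvarphi_{P'}\circ T$ is equivalent to $T\circ\varphi_{P}=\dvarphi_{P'}\circ T$. As $\varphi_{P}$ and $\dvarphi_{P'}$ are determined by their effect on the abaci $\cA(\bla,\bs)$ and $\cA(\dbla',\dbs')$, it suffices to prove that $T$ sends the $l$-abacus obtained from $\cA(\bla,\bs)$ by shifting $P$ one step to the left onto the $e$-abacus obtained from $\cA(\dbla',\dbs')$ by shifting $P'$ one step to the left. Writing $\cA(\bmu,\bs)=\varphi_{P}(\cA(\bla,\bs))$ --- the charge is unchanged, since $\varphi_{P}$ moves each bead within its own row and preserves the number of beads on every row --- the goal becomes showing that $T(\cA(\bmu,\bs))$ is $\cA(\dbla',\dbs')$ with $P'$ shifted one step left.

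Second, I would exploit the factorisation $T=\dtau\circ(.)'\circ\tau^{-1}$. The bijections $\tau^{-1}$ and $\dtau$ preserve occupancy --- they carry beads to beads and empty spots to empty spots --- so they automatically intertwine ``shift a prescribed collection of beads'' with ``shift the image collection''. The only map that mixes beads and holes is the conjugation $(.)'$ of Section~\ref{conjugating}, which is a reflection composed with the interchange of beads and empty spots (cf. the rule $\res\mapsto-\res$ of Proposition~\ref{prop_action_conj}). A direct check shows that, under this reflection-and-interchange, the leftward shift of a run of beads becomes again a leftward shift of a run of beads in the conjugate abacus: moving a bead from column $c$ to column $c-1$ displaces the corresponding empty spots of the conjugate so that the net effect is once more a bead moved one step to the left. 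This is precisely the mechanism already used in the proof of Lemma~\ref{lem_double_2}, where $T$ matches the beads of $P$ with empty spots of $\cA(\dbla',\dbs')$ and the empty spots directly to the left of $P$ --- those that get filled when $P$ is shifted --- with the beads of the $l$-period $P'$.

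Finally, combining these observations, shifting $P$ one step to the left amounts to filling the empty spots immediately to the left of $P$ while vacating the right ends of its row-segments; transporting this through $T$ by the bead--hole duality produces exactly the move that fills the empty spots immediately to the left of $P'$ and vacates the right ends of its segments, namely $\dvarphi_{P'}$. Shiftability of $P$ and of $P'$ are equivalent (as noted just before the lemma), so both sides are everywhere defined. \emph{The hard part} is that $\tau$ is not a uniform left-shift: crossing a boundary of the $e\times l$ rectangles sends a position in column $c$ not to column $c-1$ of the $1$-abacus but across a jump of size $el$, so one cannot merely assert that $\tau$ commutes with ``shift left by one''. The delicate step is therefore to verify, using the rectangle-flipping description of $\dtau\circ\tau^{-1}$ (Figure~\ref{ab16}) rather than a naive commutation, that one leftward step of the whole period $P$ is carried to exactly one leftward step of $P'$, rather than to some more complicated rearrangement. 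This bookkeeping is the same careful analysis of Correspondence~(\ref{indexation}) already performed in the proof of Lemma~\ref{lem_double_2}, and once it is in place the identity $\varphi_{P}=\dT\circ\dvarphi_{P'}\circ T$ follows.
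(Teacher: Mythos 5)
Your proposal is correct and follows essentially the same route as the paper: both arguments reduce the identity to tracking the net set-theoretic effect of $\varphi_{P}$ on occupied positions, observe that interior beads of each row-run are fixed so that only the run boundaries change, and identify those boundary changes with the one-step leftward shift of $P'$ via the bead--hole matching between $P$ and $P'$ set up in the proof of Lemma~\ref{lem_double_2}. The paper makes the run-overlap cancellation explicit by a short displayed computation, while you describe it verbally and flag the rectangle-boundary bookkeeping for $\tau$ and $\dtau$ as the remaining verification; this is the same point the paper also defers to its earlier analysis.
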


\proof
Assume, without loss of generality, that $l\leq e$.
We have already explained how an $l$-period $P'$ corresponds to a given $e$-period $P$.
Now, write
$$ P = \left\{ (j_k,\be_k) \, ; \, k\in\{1,\dots,e\} \right\} \subseteq \cA(\bla,\bs).$$
By definition of $\varphi_{P}$ (\ref{shift}), 
$\varphi_{P}$ only affects $P$, namely $\varphi_{P}(P)$ is an $e$-period $\hP$
defined by
$$\hP =\left\{ (j_k,\be_k-1) \, ; \, k\in\{1,\dots,e\} \right\}.$$
If $k$ is such that $j_k=j_{k+1}$ (which is a case that necessarily happens if $e>l$),
then $\be_{k+1}=\be_k-1$.
Moreover,
$$ 
\begin{array}{ccl}
\{ (j_k,\be_k) , (j_k, \be_{k+1} ) \} & \overset{\varphi_{P}}{\longmapsto} &
\{ (j_k,\be_k-1) , (j_k, \be_{k+1}-1 )\} \\
& & = \{(j_k,\be_{k+1}) , (j_k, \be_k-2 )\},
\end{array}
$$
so $\varphi_{P}$ fixes $(j_k,\be_{k+1})$.
Therefore, $\varphi_{P}$ fixes all elements of $P$ that are on the same row but one,
so it moves exactly $e$ beads,
and in fact these are the beads of $P'$ and they are moved in $\cA(\dbla',\dbs')$
one step to the left.
This is precisely the action of $\dvarphi_{P'}$ on $\cA(\dbla',\dbs')$ by definition.
\endproof

\begin{rem}
Note that the use of $T$ and its inverse only means that 
we look at the action of $\varphi_{P}$ on the $e$-abacus using Indexation \ref{indexation}.
In fact, this lemma claims that 
shifting an $l$-period of $\cA(\dbla',\dbs')$ one step to the left amounts to
shifting an $e$-period of $\cA(\bla,\bs)$ one step to the left.
Of course, the same holds by switching $\cA(\dbla',\dbs')$ and $\cA(\bla,\bs)$ and $e$ and $l$.
In particular, this procedure is always well defined when the considered period is the last non-trivial period.
This will be used in Section \ref{btilde}.
\end{rem}

\begin{exa}\label{exa_shift_period}
To illustrate the phenomenon explained in the proof of Lemma \ref{lem_double_4},
look at Example \ref{exa_doubly_hw}.
We get the following picture.
\begin{figure}[H] 
\includegraphics{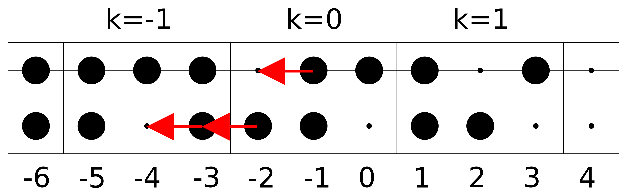}

\vspace{0.5cm}

\includegraphics{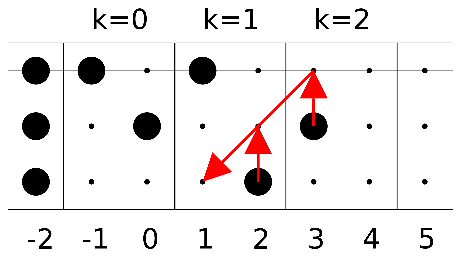}
\caption{Shifting the last non-trivial period of $\cA(\dbla',\dbs')$ one step to the left,
and its representation on $\cA(\bla,\bs)$}
\label{ab18}
\end{figure}
We see that on $\cA(\bla,\bs)$, the action depicted with the red arrows actually
corresponds to shifting a period of $\cA(\bla,\bs)$ (the first one) one step to the left.
\end{exa}

\subsection{The partition $\ka$}\

Denote $\sS = \left\{ S_k \, ; \, 1\leq k \leq N \right\}$
and $\dsS = \left\{  \dS_k \, ; \, 1\leq k \leq \dN \right\}$.
The elements of $\sS$ (respectively $\dsS$) are the different
non-zero size parts of $\bla$ (respectively $\dbla'$), see Lemma \ref{lem_double_2} (2).
Note that we have $$S_k < S_{k-1} \text{\quad for all } k\in\{ 2,\dots,N \}.$$
Similarly, $$\dS_k < \dS_{k-1} \text{ \quad for all } k\in\{ 2,\dots,\dN \}.$$
For $S_k$ in $\sS$ (respectively $\dS_k$ in $\dsS$),
denote $M(S_k)$ (respectively $\dM(\dS_k)$)
the multiplicity of the non-zero part $S_k$ (respectively $\dS_k$)
in the $l$-partition $\bla$
(respectively the $e$-partition $\dbla'$).
By Corollary \ref{cor_double_3}, $M(S_k)$ (respectively $\dM(\dS_k)$)
is divisible by $e$ (respectively $l$).
Let $m(S_k)$ (respectively $\dm(\dS_k)$) be the integer $M(S_k)/e$
(respectively $\dM(\dS_k)/l$).

Set 
\begin{equation}\label{kappa}
\begin{array}{l}
\ka = (S_1^{m(S_1)},S_2^{m(S_2)}, \dots, S_{N}^{m(S_{N})}) \\
\dka = (\dS_1^{\dm(\dS_1)},\dS_2^{\dm(\dS_2)},\dots, \dS_{\dN}^{\dm(\dS_{\dN})})
\end{array}
\end{equation}

\begin{rem}\label{rem_kappa_1}
Equivalently, 
$\ka$ can be defined as 
the ordered multiset $\left\{ S_k \, ; \, 1\leq k \leq N \right\}$
(and similarly for $\dka$).
\end{rem}

\begin{prop}\label{prop_kappa_1}
The sequences $\ka$ and $\dka$ are partitions, and $\dka = \ka'_l$.
\end{prop}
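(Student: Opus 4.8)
The statement splits into two parts. That $\ka$ and $\dka$ are partitions is immediate: by Corollary \ref{cor_double_3}(1) every multiplicity $M(S_k)$ is divisible by $e$, so $m(S_k)=M(S_k)/e$ is a positive integer, and likewise each $\dm(\dS_k)$ is a positive integer; since the distinct nonzero part sizes satisfy $S_1>S_2>\dots$ and $\dS_1>\dS_2>\dots$, the sequences $\ka=(S_1^{m(S_1)},\dots)$ and $\dka=(\dS_1^{\dm(\dS_1)},\dots)$ are weakly decreasing with positive entries, hence partitions. The whole content of the proposition is therefore the conjugacy relation $\dka=\ka'_l$, which asserts that $\dka$ is the transpose of $\ka$.

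I would first restate this transpose relation in a form adapted to the abaci. By Remark \ref{rem_kappa_1}, $\ka$ is exactly the ordered multiset of part sizes attached to the non-trivial $e$-periods of $\cA(\bla,\bs)$, and similarly $\dka$ is the ordered multiset of part sizes of the non-trivial $l$-periods of $\cA(\dbla',\dbs')$. Writing the conjugate of $\ka$ via $\ka'_v=\#\{k : S_k\geq v\}$ (with $k$ ranging over all non-trivial $e$-periods), the identity $\dka=\ka'_l$ becomes the purely combinatorial assertion that for every $v\geq 1$ the number of non-trivial $l$-periods whose part size is at least $v$ equals the part size of the $v$-th largest non-trivial $e$-period; equivalently, the Young diagram recording $e$-period part sizes and the one recording $l$-period part sizes are transposes of one another. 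This is the same transposition already present in $\dtau\circ\tau^{-1}$, which by Section \ref{uglovs_bij} flips each $e\times l$ rectangle across its main diagonal.

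The plan is to prove this transpose identity by induction on the rank $|\ka|=|\dka|=\mathrm{rank}(\la)/(el)$, which is finite by Corollary \ref{cor_double_3}(2). The base case is $|\bemp,\bs\rangle$, where $\ka=\dka=\emptyset$ and the relation is trivial. For the inductive step I would use the key observation recorded in the proof of Proposition \ref{prop_charge}: a doubly highest weight vertex is obtained from $|\bemp,\bs\rangle$ by shifting whole $e$-periods to the right, and by Lemma \ref{lem_double_4} each such shift is simultaneously a shift of $l$-periods to the right in $\cA(\dbla',\dbs')$. Peeling off a period exactly as in the induction of the proof of Lemma \ref{lem_double_2} yields a strictly smaller doubly highest weight vertex, to which the induction hypothesis applies; it then remains to verify that restoring the peeled period modifies the $e$-period part-size diagram and the $l$-period part-size diagram in mutually transpose ways, so that conjugacy is preserved.

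The step I expect to be the main obstacle is precisely this bookkeeping, because the correspondence between $e$-periods and $l$-periods induced by $T$ is not one-to-one: as analysed in the proof of Lemma \ref{lem_double_2}, the empty columns lying to the left of a given $e$-period match, via Correspondence (\ref{indexation}), a whole block of $l$-periods in $\cA(\dbla',\dbs')$, and a single elementary move on one side redistributes part sizes across several periods on the other. To keep this under control I would organise the induction around elementary one-step shifts rather than removal of whole periods, using Lemma \ref{lem_double_4} to guarantee that each one-step right shift of an $e$-period corresponds to a one-step right shift of the associated $l$-period; tracking a single such elementary move shows that it increments one part of $\ka$ and one part of $\dka$ in the dual positions dictated by transposition, which is exactly what is needed to propagate the relation $\dka=\ka'_l$ through the induction.
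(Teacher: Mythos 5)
Your argument is correct and follows essentially the same route as the paper: the partition claim comes from the strict decrease of the part sizes $S_k$ (and $\dS_k$), and the conjugacy $\dka=\ka'$ rests on the period-to-period correspondence of Lemma \ref{lem_double_2} together with the diagonal-flip nature of the correspondence $T$, which is exactly what the paper invokes. The only difference is one of presentation: you organise the transposition check as an explicit induction on $|\ka|$ via elementary one-step shifts (Lemma \ref{lem_double_4}), making precise the bookkeeping that the paper's proof leaves implicit in the phrase ``by definition of the correspondence $T$, it is obtained by conjugating.''
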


\proof
Because $S_k < S_{k-1} \text{\quad for all } k\in\{ 2,\dots,N \}$ and
$\dS_k < \dS_{k-1} \text{ \quad for all } k\in\{ 2,\dots,\dN \}$ as already observed,
$\ka$ and $\dka$ are partitions.
Further, each period $P$ in the $l$-abacus corresponds to a period $P'$ in the $e$-abacus,
and therefore the partition $\dka$ can be read off the partition $\ka$.
In fact, by definition of the correspondence $T$ (\ref{indexation}),
it is obtained by conjugating the original partition $\ka$.
\endproof

\begin{exa}\label{exa_kappa}
Take the charged multipartition in Example \ref{exa_doubly_hw}.
We have $\sS = \{ 3,1 \}$, with $M(3) = 2$ and $M(1) = 2$.
Similarly, we have $\dsS = \{ 2,1\}$ with $\dM(2) = 3$ and $\dM(1)=6$.
We get $\ka = (3.1)={\Yboxdim{5pt}\young(\,\,\,,\,)}$
and $\dka = (2.1^2)={\Yboxdim{5pt}\young(\,\,,\,,\,)}$.

Note that using the multiset definition of $\ka$ and $\dka$ (Remark \ref{rem_kappa_1}),
we have directly $\ka = \sS = \{3,1\}$ 
and $\dka = \dsS = \{ 2,1,1\}$.
\end{exa}

\begin{rem}\label{rem_kappa_2}
Note that $\ka$ depends on $|\bla,\bs\rangle$.
In fact, it induces two maps 
$$\begin{array}{cccc}
   \ka: & B & \lra & \Pi \\
        & |\bla,\bs\rangle & \longmapsto & \ka
  \end{array}$$
and 
$$\begin{array}{cccc}
   \ka': & \dB' & \lra & \Pi\\
        & |\dbla',\dbs'\rangle & \longmapsto & \ka'.
  \end{array}$$
In the rest, we want to use the notation $\ka(|\bla,\bs\rangle)$,
or simply $\ka(\bla)$ (or $\ka(\la)$).
Importantly, note that the map $\ka$ is surjective:
starting from a partition $\si$, it is easy to construct a doubly
highest weight $l$-partition (respectively $e$-partition) 
$|\bla,\bs\rangle$ (respectively $|\dbla',\dbs'\rangle$) such that
$\ka(\la)=\si$, so $\ka$ is surjective.

Moreover, if we restrict $\ka$ to the set of doubly highest weight vertices,
it is clearly injective since two doubly highest weight $l$-partitions with different $\ka$ are different.
So $\ka$ restricted to the set of doubly highest weight vertices is a bijection.
\end{rem}

We end this section on a refinement of Corollary \ref{cor_double_3}.

\begin{cor}\label{cor_kappa_2}
We have
\begin{enumerate}
\item $|\bla| = e |\ka|$ and $|\dbla'| = l |\ka|$
\item $|\la| = el |\ka|$
\end{enumerate}
\end{cor}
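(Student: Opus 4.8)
The plan is to treat the two assertions separately: derive (1) directly from the definitions of $M$, $m$ and $\ka$ together with Proposition \ref{prop_kappa_1}, and then deduce (2) from (1) by means of the $e$-core/$e$-quotient decomposition of $\la'$, the only genuinely new input being the vanishing of the resulting $e$-core.

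First I would prove (1). The rank of a multipartition is the sum, over its distinct nonzero part sizes, of each part size times its multiplicity. For $\bla$ the distinct nonzero parts are the $S_k$ with multiplicity $M(S_k)=e\,m(S_k)$, so
$$|\bla|=\sum_{k=1}^{N}S_k\,M(S_k)=e\sum_{k=1}^{N}S_k\,m(S_k)=e|\ka|.$$
The identical computation for $\dbla'$, using $\dM(\dS_k)=l\,\dm(\dS_k)$, yields $|\dbla'|=l|\dka|$; and since $\dka=\ka'_l$ is the conjugate of $\ka$ by Proposition \ref{prop_kappa_1}, conjugation preserves the number of boxes, so $|\dka|=|\ka|$ and $|\dbla'|=l|\ka|$.

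For (2) I would first note that the conjugation isomorphism (\ref{conj}) preserves the number of boxes, whence $|\la|=|\la'|$. By Remark \ref{rem_ribbon}, $\dbla'$ is the $e$-quotient of $\la'$ and $\dbs'$ determines its $e$-core; writing $c$ for that core, the classical $e$-core/$e$-quotient identity gives $|\la'|=|c|+e|\dbla'|$, which by (1) equals $|c|+el|\ka|$. It therefore remains only to prove that $c=\emptyset$.

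To establish $c=\emptyset$, I would strip all $e$-ribbons from $\la'$: by Remark \ref{rem_ribbon} this amounts to shifting every bead of $\cA(\dbla',\dbs')$ to the left, reducing $\dbla'$ to $\bemp$ and leaving exactly the $e$-core $c$. By Lemma \ref{lem_double_4}, each leftward shift of an $l$-period of $\cA(\dbla',\dbs')$ is mirrored by a leftward shift of an $e$-period of $\cA(\bla,\bs)$; since $|\bla,\bs\rangle$ is totally $e$-periodic, carrying out all of these shifts reduces $\cA(\bla,\bs)$ to $\cA(\bemp,\bs)$ without altering the charge. Thus the fully reduced vertex is the doubly highest weight vertex $|\bemp,\bs\rangle$ with $\dbla'=\bemp$, its associated $e$-core being $c$, and its charges satisfying $\bs\in A(s)$ and $\dbs'\in\dA(s)$ by Proposition \ref{prop_charge}. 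The main obstacle is precisely this last step, namely concluding that $c=\emptyset$: the conditions $\bla=\bemp$ and $\dbla'=\bemp$ force $\cA(\la,s)$ to be the $\tau$-staircase attached to $\bs$ and simultaneously force $c=\la'$ to be an $e$-core, and I would argue, by a direct inspection of Correspondence (\ref{indexation}) in the spirit of the proof of Proposition \ref{prop_charge}, that the compatibility of these two staircases through $\tau$, conjugation and $\dtau$, together with the fundamental-domain constraints $\bs\in A(s)$ and $\dbs'\in\dA(s)$, leaves the vacuum abacus as the only possibility. Hence $c=\emptyset$ and $|\la|=|\la'|=el|\ka|$.
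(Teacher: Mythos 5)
Part (1) of your argument is correct and is essentially the paper's own computation: the paper counts the $e$ beads of each non-trivial period, you count the part multiplicities $M(S_k)=e\,m(S_k)$, and these are the same bookkeeping; your appeal to $|\dka|=|\ka|$ via Proposition \ref{prop_kappa_1} is exactly what the second identity needs.

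For part (2) you have correctly located the one step that carries all the content. The paper's proof simply writes $|\la|=e|\dbla'|$, i.e.\ the $e$-core/$e$-quotient identity with the core term silently dropped, and you rightly isolate the vanishing of that core $c$ as ``the main obstacle''. The difficulty is that your resolution of it is only a sketch, and as sketched it does not go through. After stripping all periods you land on $|\bemp,\bs\rangle$ with $\bs\in A(s)$ and $\dbla'=\dbemp$, and you then assert that the fundamental-domain constraints force the corresponding $1$-abacus to be the vacuum one. That is not a formal consequence of $\bs\in A(s)$: for instance with $l=2$, $e=3$, the charge $\bs=(2,2)$ lies in $A(4)$, yet $\tau^{-1}(\cA(\bemp,(2,2)))$ is the $1$-abacus with beads at $\{5,4,2,1\}\cup\{c\leq 0\}$, i.e.\ the charged partition $|(1,1),4\rangle$, whose conjugate $(2)$ is a non-empty $3$-core. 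So ``$\bla=\bemp$, $\dbla'=\dbemp$ and $\bs\in A(s)$'' do not by themselves yield $c=\emptyset$; one would need to know in addition that the particular charges $\bs$ and $\dbs'$ occurring for the doubly highest weight vertex at hand are the balanced ones whose associated core vanishes, and this is precisely what neither your sketch nor the paper's own one-line proof establishes. Until that point is settled (or the statement is read with the core term reinstated, $|\la|=|\bar{\la}|+el|\ka|$ in the notation of Notation \ref{not_dhw}), the proof of (2) is incomplete, and the gap is a genuine one rather than a routine verification left to the reader.
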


\proof \
\begin{enumerate}
\item The partition $\ka$ encodes the position of the non-trivial $e$-periods in $\cA(\bla,\bs)$.
Each $e$-period consists of $e$ beads, so that $|\bla| = e |\ka|$.
Similarly, $|\dbla'| = l |\ka|$.
\item As in the proof of Corollary \ref{cor_double_3}, we use Remark \ref{rem_ribbon},
which ensures that $|\la| = e |\dbla'| = el |\ka|$ by (1).
\end{enumerate}
\endproof

\section{The Heisenberg crystal}\label{heiscrys}

The aim of this section is to obtain a ``crystal version'' of Theorem \ref{thm_uglov}.
More precisely, we want to construct \textit{crystal Heisenberg operators},
such that 
\begin{enumerate}
 \item they induce maps between $\Ue$- and $\Ul$-crystals which commute with
the two kinds of crystal operators. Such maps are called \textit{double crystal isomorphisms}.
 \item every charged $l$-partition can be obtained from the empty partition
 charged by $\bs\in A(s)$ for some $s\in\Z$ by applying some sequence of
 Kashiwara crystal operators of $\Ue$ and $\Ul$ and of Heisenberg crystal operators.
\end{enumerate}

\textbf{Notation: } If $\phi: B\lra B$ is a map between crystals, then
we will also write $\phi$ for the map from $\dB'$ to $\dB'$
as well as for the map going from the set of charged partitions to itself
induced by the correspondence (\ref{indexation}).

\subsection{The maps $\tb_{-\ka}$} \label{btilde}\

\begin{defi}\label{def_btilde}
Let $|\la,s\rangle$ be a charged partition
which is a doubly highest weight vertex.
We identify $|\la,s\rangle$ with the charged  $l$-partition $|\bla,\bs\rangle$
and the charged $e$-partition $|\dbla',\dbs'\rangle$ using (\ref{indexation}).
Define 
$$
\begin{array}{ccc}
\tb_{-1}|\la,s\rangle = |\mu,s\rangle
&
\quad \mand \quad\quad
&
\tb'_{-1}|\la,s\rangle = |\nu,s\rangle
\end{array}
$$
where $|\mu,s\rangle$ (respectively $|\nu,s\rangle$) 
is identified with $|\bmu,\bs\rangle$ (respectively $|\dbnu',\dbs'\rangle$) 
using (\ref{indexation})
where 
\begin{itemize}
 \item $\cA(\dbnu',\dbs')$ is obtained from $\cA(\dbla',\dbs')$ by shifting its last non-trivial period one step to the left.
\item $\cA(\bmu,\bs)$ is obtained from $\cA(\bla,\bs)$ by shifting its last non-trivial period one step to the left.
\end{itemize}
\end{defi}

\begin{rem}\label{rem_btilde}
Remember that we had defined a map $\varphi_{P}$ in (\ref{shift}) which shifts the period $P$ one step
to the left in the $l$-abacus.
Therefore, identifying abaci and charged multipartitions, $\tb_{-1}=\varphi_{P}$ where $P$ is the last
non-trivial $e$-period of $\cA(\bla,\bs)$ (which we have noticed earlier is well-defined).
Similarly, $\tb'_{-1}=\varphi_{e,Q}$ where $Q$ is the last non-trivial $l$-period of $\cA(\dbla',\dbs')$.
\end{rem}

Note that by Remark \ref{rem_ribbon} together with Lemma \ref{lem_double_4},
both $\tb_{-1}$ and $\tb'_{-1}$ act on $|\la,s\rangle$ by removing $l$ $e$-ribbons.

\begin{exa}\label{exa_btilde}
In Example \ref{exa_shift_period}, we have depicted the action of $\tb_{-1}$ on both the $e$-abacus
and the $l$-abacus.
In terms of multipartitions, we have 
$$
\begin{array}{l}
\tb_{-1}|(3,3.1,1),(-1,0,0)\rangle = | (3,3,\emptyset),(-1,0,0) \rangle \text{ , i.e.} \\
\tb_{-1}|(10.8.4.2),-1\rangle = | (10.8),-1\rangle \vspace{2mm} \\
\end{array}
$$
and
$$\begin{array}{l}
\tb'_{-1}|(2^2.1^3,2.1^3),(0,1)\rangle = | (2^2.1,2.1^2),(0,1) \rangle \text{ , i.e} \\
\tb'_{-1}|(10.8.4.2),-1\rangle = | (6.6.4.2),-1 \rangle \\
\end{array}
$$
\end{exa}

Recall that we have denoted $B$ the crystal graph of the Fock space $\cF_{\bs,e}$
and $\dB'$ the crystal graph of the Fock space $\cF_{\dbs',l}$ in Section \ref{comm_crystals}.
We have two induced maps between crystals which we denote the same way:
\begin{equation}\label{inducedbtilde}
\begin{array}{ccccccccc}
\tb_{-1} : & B & \lra & B & \hspace{1cm} & \tb'_{-1} : & \dB' & \lra & \dB'
\\
& |\bnu,\bs\rangle & \longmapsto & \tb_{-1}|\bnu,\bs\rangle 
& \ \hspace{0.4cm}\ &
& |\dbnu',\dbs'\rangle & \longmapsto & \tb'_{-1}|\dbnu',\dbs'\rangle,
\end{array}
\end{equation}
where $\tb_{-1}(|\bnu,\bs\rangle)$ is computed as follows:
\begin{enumerate}
\item Find the highest weight vertex in the connected
component of $B$ containing $|\bnu,\bs\rangle$
(either recursively via a sequence of Kashiwara crystal operators,
or more explicitely via the algorithm exposed in \cite[Remark 6.4]{Gerber2015}).
\item Use the correspondence (\ref{indexation}) to
get an element of $\dB'$, and find
the highest weight vertex in the connected
component of $\dB'$ of this $e$-partition.
This is a doubly highest weight vertex in view of Theorem \ref{thm_comm_crystal}.
\item Apply $\tb_{-1}$ using again the correspondence (\ref{indexation}).
\item Do the reverse operations of Points (2) and (1).
The resulting $l$-partition is $\tb_{-1}|\bnu,\bs\rangle$.
\end{enumerate}
The map $\tb'_{-1} : \dB' \lra  \dB'$ is defined similarly,
switching indexations using (\ref{indexation}) and
replacing $\tb_{-1}$ by $\tb'_{-1}$ in the above procedure.
In particular $\tb_{-1}$ and $\tb_{-1}'$ are double crystal isomorphisms.

\medskip

By extension, we define $\tb_{-z}: B\lra B$ by saying that,
for a doubly highest weight vertex $|\bla,\bs\rangle\in B$, $\tb_{-z}|\bla,\bs\rangle$
is obtained from $|\bla,\bs\rangle$ by shifting its last non-trivial $e$-period $z$ steps to the left, if possible.
We extend this to $B$ in the same way as $\tb_{-1}$ (Formula (\ref{inducedbtilde}) above).
We define similarly $\tb'_{-z}: \dB'\lra \dB'$.

\begin{rem}\label{rem_btildez}
One sees that $\tb_{-z}$ is the $z$-fold composition of $\tb_{-1}$ if and only if $S_{N} = z$
(see Lemma \ref{lem_double_2}, i.e. if and only if the last non-trivial $e$-period of $\bla$ corresponds to a part $z$).
We have the similar property for $\tb'_{-z}$.
\end{rem}

\begin{defi}\label{def_btildepart}
For a partition $\si=(\si_1,\si_2,\dots,\si_t)$,
we define $\tb_{-\si} :  B   \lra  B$ and $\tb'_{-\si} :  \dB'   \lra  \dB'$
through the formulas
\begin{equation}
\begin{array}{l}
\tb_{-\si} = \tb_{-\si_1}\circ\tb_{-\si_2}\circ\dots\circ\tb_{-\si_t} \mand \\
\tb'_{-\si} = \tb'_{-\si_1}\circ\tb'_{-\si_2}\circ\dots\circ\tb'_{-\si_t}.
\end{array}
\end{equation} 
When one of the $\tb_{-\si_k}$ is not well defined on 
$(\tb_{-\si_{k+1}}\circ\dots\circ\tb_{-\si_t})|\bla,\bs\rangle$, 
we set $\tb_{-\si}|\bla,\bs\rangle=0$
(and similarly for $\tb'_{-\si}$).
\end{defi}

Remember that the partition $\ka$ (respectively $\ka'$) 
associated to $|\la,s\rangle$ (see \ref{kappa})
is written $\ka=(\ka_1,\ka_2,\dots,\ka_t)$ where each $\ka_i$ is a certain $S_{k}$
(respectively $\ka'=(\ka'_1,\ka'_2,\dots,\ka'_u)$ where each $\ka'_i$ is a certain $\dS_{k}$).
So because of Remark \ref{rem_btildez}, $\tb_{-\ka}$ and $\tb'_{-\ka'}$
are well-defined on $|\la,s\rangle$.

\begin{prop}\label{prop_btildekappa_1}\
\begin{enumerate}
 \item If $|\la,s\rangle$ is a doubly highest weight partition,
then $\tb_{-\ka}|\bla,\bs\rangle=|\bemp,\bs\rangle$
and $\tb'_{-\ka}|\dbla',\dbs'\rangle=|\dbemp,\dbs'\rangle$.
\item The following diagram commutes
$$ \xymatrix@!0 @R=1.5cm @C=2cm{
  B  \ar[r]^-{T}\ar[d]_-{\tb_{-\ka}}
&
  \dB' 
  \ar[d]^-{\tb'_{-\ka'}} 
\\
    B \ar[r]_-{T}
&
   \dB',
  }$$
where here, the partition $\ka$ depends on
the chosen multipartition (see Remark \ref{rem_kappa_2}).
Therefore, we write $\tb_{-\ka} = \tb'_{-\ka'}.$
\end{enumerate}

\end{prop}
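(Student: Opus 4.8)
The plan is to prove (1) first and to deduce (2) from it, using that all the maps involved commute with the crystal operators. I would carry out (1) for the $l$-partition side in detail and obtain the $e$-partition side by the symmetric argument, exchanging the roles of $e$ and $l$ and replacing $\ka$ by its conjugate $\ka'=\dka$; recall from Proposition \ref{prop_kappa_1} that $\ka'$ is precisely the partition recording the sizes of the $l$-periods of $\cA(\dbla',\dbs')$, so that the operator emptying $|\dbla',\dbs'\rangle$ is $\tb'_{-\ka'}$ (the operator appearing in (2)).

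For (1), I would induct on the number $t=N$ of non-trivial $e$-periods of $\cA(\bla,\bs)$, which equals the number of parts of $\ka=(\ka_1,\dots,\ka_t)$, where $\ka_i=S_i$. The base case $t=0$ is immediate, since then $|\bla,\bs\rangle=|\bemp,\bs\rangle$ and $\tb_{-\ka}=\id$. For the inductive step I would apply $\tb_{-\ka_t}$ first. As $\ka_t=S_N$ is the common part-size encoded by the last non-trivial period $P_N$ (Lemma \ref{lem_double_2}), and as shifting the last non-trivial period to the left is always well defined (the remark following Lemma \ref{lem_double_4}), shifting $P_N$ by $S_N$ steps to the left turns it into a trivial period while leaving the other periods untouched. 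I would then check that the outcome is again a doubly highest weight vertex---total $e$-periodicity is preserved by shifting a whole period, and total $l$-periodicity of its image under $T$ follows from Lemma \ref{lem_double_4}---whose associated partition is $(\ka_1,\dots,\ka_{t-1})$. The induction hypothesis then gives $\tb_{-(\ka_1,\dots,\ka_{t-1})}\circ\tb_{-\ka_t}|\bla,\bs\rangle=|\bemp,\bs\rangle$, which is the claim.

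For (2), I would first note that $\ka$ is constant on each doubly connected component of $B$, being read off the doubly highest weight vertex of that component (Remark \ref{rem_kappa_2}). Fixing such a component, with doubly highest weight vertex $b_0$ and associated partition $\ka$, the map $\tb_{-\ka}$ restricts there to the fixed composite $\tb_{-\ka_1}\circ\dots\circ\tb_{-\ka_t}$, a double crystal isomorphism. Since $T$ commutes with all the crystal operators $\te_i,\tf_i,\tde_j,\tdf_j$ (Theorem \ref{thm_comm_crystal}), both $T\circ\tb_{-\ka}$ and $\tb'_{-\ka'}\circ T$ are double crystal isomorphisms on this component. I would then evaluate them on $b_0$: using $T(b_0)=|\dbla',\dbs'\rangle$, $T|\bemp,\bs\rangle=|\dbemp,\dbs'\rangle$ and part (1) on each side, both composites send $b_0$ to $|\dbemp,\dbs'\rangle$. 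As a double crystal isomorphism of a connected component is determined by the image of its highest weight vertex, the two composites coincide on the whole component, hence on all of $B$; this is the asserted commutativity.

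The step I expect to be the main obstacle is the combinatorial bookkeeping in (1): proving rigorously that shifting the last non-trivial period by its full size is admissible and produces exactly the doubly highest weight vertex indexed by $\ka$ with its last part deleted, without disturbing the extraction of the remaining periods. This relies on the detailed analysis of Correspondence (\ref{indexation}) developed in Section \ref{doubly_hw}, especially Lemmas \ref{lem_double_2} and \ref{lem_double_4}. Once (1) is secured, (2) is essentially formal; the only point requiring care is that $\tb_{-\ka}$, though it depends on the vertex, is constant and hence a genuine crystal isomorphism on each component, so that the ``determined by the highest weight vertex'' principle may be applied.
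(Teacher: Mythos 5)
Your proof is correct and follows essentially the same route as the paper's: part (1) by peeling off the non-trivial periods one at a time (the paper simply declares this ``straightforward'' from the construction of $\ka$), and part (2) by checking the diagram on doubly highest weight vertices via $T|\bemp,\bs\rangle=|\dbemp,\dbs'\rangle$ (which the paper derives from Proposition \ref{prop_charge}) and extending by crystal equivariance. You supply more detail than the paper does --- the explicit induction for (1) and the ``determined by the image of the highest weight vertex'' argument for (2) --- but the underlying ideas coincide.
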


\proof\
\begin{enumerate}
 \item By definition of $\tb'_{-\si}$ on $\dB'$
and by construction of $\ka$, it is straightforward
the $e$-partition $\tb'_{-\ka'}|\dbla',\dbs'\rangle$ is empty
(and $\tb_{-\ka'}$ does not modify the $e$-charge).
Similarly, we have $\tb_{-\ka}|\bla,\bs\rangle=|\bemp,\bs\rangle$.
\item 
Because of Proposition \ref{prop_charge},
$T$ maps $|\bemp,\bs\rangle$ to $|\dbemp,\dbs\rangle$.
In other words, the $e$-partition associated to 
the $|\bemp,\bs\rangle$ such that $\bs\in A(s)$
via the correspondence (\ref{indexation}) is also empty.
Together with Point (1), we get the commutativity of the diagram.
\end{enumerate}
\endproof

\begin{num}\label{not_dhw}
If $|\la,s\rangle$ is a doubly highest weight vertex, we will use the notation
$|\bar{\la},s\rangle$ for the charged partition $\tb_{-\ka(\la)}|\la,s\rangle$.
\end{num}

\begin{prop}\label{prop_btildekappa}
The map $\tb_{-\ka}$ is a double crystal isomorphism.
\end{prop}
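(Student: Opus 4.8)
The plan is to reduce the claim to the fact, already recorded for $\tb_{-1}$, that shifting the last non-trivial period is a double crystal isomorphism, and then to propagate this through the composition defining $\tb_{-\ka}$. Recall that a map is a double crystal isomorphism precisely when it commutes with all the Kashiwara operators $\te_i,\tf_i$ ($i=0,\dots,e-1$) and $\tde_j,\tdf_j$ ($j=0,\dots,l-1$); since these two families already commute with one another by Theorem \ref{thm_comm_crystal}, and since a composition of maps each commuting with a given operator again commutes with it, it suffices to check that each factor of $\tb_{-\ka}$ is a double crystal isomorphism.

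First I would establish that, for every fixed integer $z\geq 1$, the map $\tb_{-z}\colon B\to B$ is a double crystal isomorphism. This is the analogue for $z$ steps of the statement made for $\tb_{-1}$ just after (\ref{inducedbtilde}), and the argument is identical: $\tb_{-z}$ is defined by the same four-step procedure (pass to the doubly highest weight vertex of the bi-crystal component via Kashiwara operators, shift the last non-trivial period $z$ steps to the left there, and undo the Kashiwara operators). Shifting a full $e$-period preserves total $e$-periodicity, and by Lemma \ref{lem_double_4} it corresponds under $T$ to shifting a full $l$-period, hence preserves total $l$-periodicity as well; thus $\tb_{-z}$ carries doubly highest weight vertices to doubly highest weight vertices, and the construction forces commutation with both families of crystal operators.

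Next I would handle the fact that, in $\tb_{-\ka}$, the partition $\ka=\ka(\bla)$ depends on the input. The key observation is that $\ka$ is constant on each connected component of the bi-crystal, since it is read off the doubly highest weight vertex of that component (Remark \ref{rem_kappa_2}), and the two families of crystal operators preserve components. Consequently, on a fixed bi-crystal component, $\tb_{-\ka}$ agrees with the honest composition $\tb_{-\ka_1}\circ\dots\circ\tb_{-\ka_t}$ for the constant tuple $\ka=(\ka_1,\dots,\ka_t)$ attached to that component, as in Definition \ref{def_btildepart}. By the first step each factor is a double crystal isomorphism, so this composition commutes with every crystal operator on that component; since components are preserved by the crystal operators and $\ka$ is locally constant, the commutation holds globally.

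The main obstacle I anticipate is precisely the dependence of $\ka$ on the vertex: one must ensure that applying a Kashiwara operator and then $\tb_{-\ka}$ uses the same tuple $\ka$ as applying $\tb_{-\ka}$ first, which is exactly what local constancy of $\ka$ on bi-crystal components guarantees. Once that is in place, the consistency of the factors $\tb_{-z}$ with the crystal structures (inherited from the $\tb_{-1}$ case through Lemma \ref{lem_double_4}) is routine, and Proposition \ref{prop_btildekappa_1} provides a useful consistency check, confirming that the map collapses each doubly highest weight vertex to $|\bemp,\bs\rangle$ as expected.
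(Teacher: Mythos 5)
Your argument is correct and follows essentially the same route as the paper's (one-line) proof: reduce $\tb_{-\ka}$ to a composition of single-period shifts, each of which is a double crystal isomorphism by construction, with the paper invoking Remark \ref{rem_btildez} to factor each $\tb_{-\ka_k}$ through $\tb_{-1}$ where you argue for $\tb_{-z}$ directly. Your explicit treatment of the local constancy of $\ka$ on bi-crystal components is a point the paper leaves implicit, but it does not constitute a different method.
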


\proof
This is a direct consequence of the definition of $\tb_{-\ka}$
together with the fact that $\tb_{-1}$ is a double crystal isomorphism 
and Remark \ref{rem_btildez}.
\endproof

\subsection{The inverse maps} \label{btilde_inv}\

Recall that we have introduced the notion of trivial period in Section \ref{doubly_hw_char}.

\begin{defi}\label{def_btilde+}
Let $|\la,s\rangle$ be a charged partition
which is a doubly highest weight vertex.
We identify $|\la,s\rangle$ with the charged  $l$-partition $|\bla,\bs\rangle$
and the charged $e$-partition $|\dbla',\dbs'\rangle$ using (\ref{indexation}).
Define 
$$
\begin{array}{ccc}
\tb_{1}|\la,s\rangle = |\mu,s\rangle
&
\quad \mand \quad\quad
&
\tb'_{1}|\la,s\rangle = |\nu,s\rangle
\end{array}
$$
where $|\mu,s\rangle$ (respectively $|\nu,s\rangle$) 
is identified with $|\bmu,\bs\rangle$ (respectively $|\dbnu',\dbs'\rangle$) 
using (\ref{indexation})
where 
\begin{itemize}
 \item $\cA(\dbnu',\dbs')$ is obtained from $\cA(\dbla',\dbs')$ by shifting its first trivial period one step to the right.
\item $\cA(\bmu,\bs)$ is obtained from $\cA(\bla,\bs)$ by shifting its first trivial period one step to the right.
\end{itemize}
\end{defi}

We extend this definition and write, for a positive integer $z$,
$\tb_z|\bla,\bs\rangle$ to be the $l$-partition obtained 
by shifting the first trivial period of $|\bla,\bs\rangle$ $z$ steps to the right.
Similarly, 
$\tb'_z|\dbla',\dbs'\rangle$ to be the $e$-partition obtained 
by shifting the first trivial period of $|\dbla',\dbs'\rangle$ $z$ steps to the right.
Finally, for a partition $\si=(\si_1,\dots,\si_t)$, we define 
$\tb_{\si} =  \tb_{\si_t} \circ \dots \circ \tb_{\si_1}$ and 
$\tb'_{\si} =  \tb'_{\si_t} \circ \dots \circ \tb'_{\si_1}$.
When this is not well-defined, we set again $\tb_{\si}|\bla,\bs\rangle =0$ 
(respectively $\tb'_{\si}|\dbla',\dbs'\rangle=0$.
All of these maps induce maps between crystals $B\lra B$
or $\dB'\lra \dB'$ by the procedure explained in (\ref{inducedbtilde}).
In particular, the following property is straightforward.

\begin{prop}\label{prop_btildekappa+}
The map $\tb_{\si}$ is a double crystal isomorphism.
\end{prop}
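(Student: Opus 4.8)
The plan is to reduce the statement to the already-established fact that the single-step map $\tb_1$ is a double crystal isomorphism, and then propagate this through the composition that defines $\tb_\si$. The key observation is that $\tb_1$ is defined by shifting the first trivial period one step to the right in the abacus $\cA(\bla,\bs)$, and by Lemma \ref{lem_double_4} (applied in the reverse direction, i.e. shifting to the right rather than to the left) this single-step shift of an $e$-period in $\cA(\bla,\bs)$ corresponds under Correspondence (\ref{indexation}) precisely to the single-step shift of the associated $l$-period in $\cA(\dbla',\dbs')$. Consequently $\tb_1$ commutes with $T$ in the same manner as $\tb_{-1}$ does, and the argument establishing that $\tb_{-1}$ is a double crystal isomorphism (via the four-step procedure (\ref{inducedbtilde}), which passes through highest weight vertices and uses Theorem \ref{thm_comm_crystal}) applies verbatim to $\tb_1$. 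So first I would record that $\tb_1$ and $\tb'_1$ are double crystal isomorphisms by exactly the reasoning behind Proposition \ref{prop_btildekappa}, merely replacing leftward shifts of the last non-trivial period by rightward shifts of the first trivial period.

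Next I would handle the iteration $\tb_z$ for a positive integer $z$: shifting the first trivial period $z$ steps to the right. The cleanest route is to observe that, just as in Remark \ref{rem_btildez}, the map $\tb_z$ is well-behaved because a trivial period can always be shifted rightward (trivial periods encode only size-zero parts, so no bead collision obstructs the shift). Since each intermediate one-step shift is a double crystal isomorphism by the first paragraph, and a composition of double crystal isomorphisms is again a double crystal isomorphism, $\tb_z$ is one as well. Here I would be careful to note that "double crystal isomorphism" is a property preserved under composition precisely because it means commuting with both families of crystal operators $\te_i,\tf_i$ and $\tde_j,\tdf_j$; commuting with a fixed operator is stable under composition of maps.

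Finally, for a general partition $\si=(\si_1,\dots,\si_t)$ the map $\tb_\si = \tb_{\si_t}\circ\dots\circ\tb_{\si_1}$ is a composition of the maps $\tb_{\si_k}$, each of which is a double crystal isomorphism by the previous step, so $\tb_\si$ is a double crystal isomorphism as claimed. The only point requiring care is the convention that $\tb_\si$ is set to $0$ when some intermediate $\tb_{\si_k}$ is not well-defined; on the locus where $\tb_\si$ is defined as a genuine map between crystal vertices, the composition argument goes through unchanged, and the commutation with the crystal operators is inherited factor by factor.

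I expect the main (and really the only) subtlety to be verifying that the leftward-shift analysis of Lemma \ref{lem_double_4} transfers cleanly to the rightward shift of trivial periods, i.e. confirming that a rightward one-step shift of an $e$-period in the $l$-abacus again matches a rightward one-step shift of the corresponding $l$-period in the $e$-abacus under $T$. This is genuinely symmetric to the case treated in Lemma \ref{lem_double_4}, so it should present no real obstacle, and indeed the paper signals as much by calling the statement "straightforward." Everything else is formal propagation through compositions.
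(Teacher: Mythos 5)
Your proposal is correct and matches the paper's intent: the paper gives no written proof at all, simply declaring the statement ``straightforward'' because $\tb_{\si}$ is built from one-step shifts extended to all of $B$ via the procedure (\ref{inducedbtilde}), which by construction commutes with both families of crystal operators. Your added care about the rightward analogue of Lemma \ref{lem_double_4} and about closure of the double-crystal-isomorphism property under composition fills in exactly the details the paper leaves implicit, so there is nothing to object to.
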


\begin{rem}\label{rem_btilde_inv}
By definition of $\ka$ in Section \ref{kappa} and Proposition \ref{prop_btildekappa_1},
it is clear that for all charged partition $|\la,s\rangle$
which is a doubly highest weight vertex,
$$|\la,s\rangle = \tb_{\ka} |\bar{\la},s\rangle.$$
So, it is enough to understand the connected components in $B$ and $\dB'$
containing $|\bemp,\bs\rangle$ and $|\dbemp,\dbs'\rangle$ respectively.
This is the case we consider in the following proposition.
\end{rem}

\begin{rem}\label{rem_btilde_inv_1}
The maps $\tb_{\si}$ and $\tb_{-\si}$ are defined so that they are 
inverse to each other, that is 
$$ \tb_{\si} \circ \tb_{-\si} = \id_{B} = \tb_{-\si} \circ \tb_{\si}$$
and  
$$ \tb'_{\si} \circ \tb'_{-\si} = \id_{\dB'} = \tb'_{-\si} \circ \tb'_{\si},$$
whenever the first identities make sense.
\end{rem}

As a consequence of Remarks \ref{rem_btilde_inv} and \ref{rem_btilde_inv_1},
we see that $\tb_{\si}|\bla,\bs\rangle \neq 0$ if and only if 
the first part of $\si$ is not greater than the last part of $\ka=\ka(|\bla,\bs\rangle)$.
In this case, we have 
\begin{equation}\label{heis_map}
\begin{array}{ccl} 
\tb_{\si}|\bla,\bs\rangle & = & (\tb_{\eta}\circ\tb_{-\ka})|\bla,\bs\rangle \\
& = & \tb_{\eta}|\bemp,\bs\rangle, 
\end{array}
\end{equation}
where $\eta$ is the partition obtained by adding the parts of $\si$ to $\ka$.

\begin{prop}\label{prop_btildekappa+_0}
For all partition $\si$, we have $\tb_{\si}|\bar{\la},s\rangle = \tb'_{\si'}|\bar{\la},s\rangle$.
\end{prop}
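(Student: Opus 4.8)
The plan is to show that both sides of the asserted equality compute one and the same doubly highest weight vertex, which I will pin down by its associated partition $\ka$, and then to invoke the injectivity of $\ka$ on doubly highest weight vertices.

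First I would unwind the left-hand data. By Notation \ref{not_dhw} together with Proposition \ref{prop_btildekappa_1}(1), the charged partition $|\bar{\la},s\rangle=\tb_{-\ka(\la)}|\la,s\rangle$ is identified through Correspondence (\ref{indexation}) with the empty charged $l$-partition $|\bemp,\bs\rangle$, equivalently with the empty charged $e$-partition $|\dbemp,\dbs'\rangle$. This is a doubly highest weight vertex with $\ka=\bemp$, and the charges $\bs$ and $\dbs'$ remain fixed throughout, since the period-shifting operators do not modify the (multi)charge.

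Next I would identify each side as a doubly highest weight vertex and read off its $\ka$. By construction of the operators $\tb_{\si_k}$ and Remark \ref{rem_btilde_inv} (read as the inverse of $\ka$), $\tb_\si|\bar{\la},s\rangle$ is the doubly highest weight vertex of charge $\bs$ whose associated partition is $\ka=\si$. Running the $\Ul$-analogue on the $e$-side, $\tb'_{\si'}|\bar{\la},s\rangle$ is the doubly highest weight vertex of charge $\bs$ whose $e$-side partition is $\dka=\si'$; but by Proposition \ref{prop_kappa_1} we have $\dka=\ka'$, so this same vertex satisfies $\ka=(\si')'=\si$ as well. Both vertices therefore have charge $\bs$ and $\ka=\si$, and since the restriction of $\ka$ to doubly highest weight vertices is injective (Remark \ref{rem_kappa_2}), they coincide. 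This gives $\tb_\si|\bar{\la},s\rangle=\tb'_{\si'}|\bar{\la},s\rangle$.

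The key point on which everything hinges is the conjugation relation $\dka=\ka'$ of Proposition \ref{prop_kappa_1}, which is exactly what matches the vertex reached by the $e$-side operator $\tb'_{\si'}$ (characterised by $\dka=\si'$) with the one reached by the $l$-side operator $\tb_\si$ (characterised by $\ka=\si$). The remaining care concerns well-definedness: I would check that $\tb_\si|\bar{\la},s\rangle$ and $\tb'_{\si'}|\bar{\la},s\rangle$ are genuinely nonzero, which holds because the obstruction to $\tb_\si$ recorded after (\ref{heis_map}) is vacuous when $\ka=\bemp$, and that each factor maps doubly highest weight vertices to doubly highest weight vertices, as follows from Propositions \ref{prop_btildekappa} and \ref{prop_btildekappa+}. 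Everything else is routine bookkeeping of conjugate partitions.
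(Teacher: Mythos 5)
Your proof is correct, but it takes a genuinely different route from the paper's. The paper argues directly on abaci: using Lemma \ref{lem_double_4} it first checks that $\tb_1=\tb'_1$ on $|\bemp,\bs\rangle$, then that moving the first trivial $e$-period of the empty $l$-abacus $z$ steps to the right creates $z$ successive $l$-periods on the $e$-side, giving the explicit identity $\tb_{(z)}=\tb'_{(1^z)}$, and finally extends this matching to an arbitrary $\si$. You instead classify: both sides are doubly highest weight vertices (since $\tb_{\si}$ and $\tb'_{\si'}$ are double crystal isomorphisms), the left-hand side has $l$-charge $\bs$ and $\ka=\si$, the right-hand side has $e$-charge $\dbs'$ and $\dka=\si'$, and you conclude via the conjugation relation $\dka=\ka'$ of Proposition \ref{prop_kappa_1} together with the fact that a doubly highest weight vertex is determined by its charge and its $\ka$ (Remarks \ref{rem_kappa_2} and \ref{rem_btilde_inv}). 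This is cleaner and pushes the combinatorial content onto Proposition \ref{prop_kappa_1}, which is already available; the paper's hands-on computation, by contrast, produces the identities $\tb_{(z)}=\tb'_{(1^z)}$, which are of some independent interest. The one step you should make fully explicit is the charge matching: the injectivity of $\ka$ only holds at fixed charge, so you need that the $l$-charge of $\tb'_{\si'}|\dbemp,\dbs'\rangle$ is indeed $\bs$; this follows because each elementary period shift preserves both the $l$-charge and the $e$-charge (Lemma \ref{lem_double_4} again) and Correspondence (\ref{indexation}) pairs $|\bemp,\bs\rangle$ with $|\dbemp,\dbs'\rangle$. With that point spelled out, your argument is complete.
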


\proof
First of all, $|\bar{\la},s\rangle$ is a doubly highest weight vertex for all $s\in\Z$, which
ensures that $\tb_{\si}|\bar{\la},s\rangle$ and $\tb'_{\si'}|\bar{\la},s\rangle$ are well-defined.
In fact, the $l$-partition and the $e$-partition corresponding are empty by Proposition \ref{prop_btildekappa_1}, and the
$l$-charge (respectively $e$-charge) is an element of $A(s)$ (respectively $\dA(s)$)
by Proposition \ref{prop_charge}.

By Lemma \ref{lem_double_4}, the action of $\tb_{1}$ on the $l$-abacus
(shifting its first trivial $e$-period one step to the right) corresponds to shifting an
$l$-period one step to the right in the $e$-abacus.
Since this $e$-abacus corresponds to the empty $e$-partition, it has only trivial $l$-periods,
and one can only shift its first trivial $l$-period to the right.
This forces $\tb_1$ to be the same as $\tb'_1$. Hence, the result holds for $\si = (1)$.
In fact, one can look directly at the action of $\tb_z$ ($=\tb_{\si}$ with $\si=(z)$) on the empty $l$-partition.
Using the combinatorial definition of the correspondence (\ref{indexation}), one sees that moving
the first trivial $e$-period in the empty $l$-abacus $z$ steps to the right creates $z$
$l$-periods in the $e$-abacus which are obtained from the empty $e$-abacus by recursively shifting its first period
one step to the right.
In other terms, it corresponds to applying $\tb'_1\circ\dots\circ\tb'_1$ (with $z$ factors) to the empty $e$-abacus.
That is to say, $\tb_{(z)} = \tb'_{(1^z)}$. Therefore, the result holds for $\si=(z)$.
Similarly, it holds for $\si=(1^z)$.
Using the same observation, we deduce that for an arbitrary $\si$,
the map $\tb_{\si}$ acts on the empty $l$-abacus exactly like
$\tb'_{\si'}$ acts on the empty $e$-abacus, with the identification (\ref{indexation}).
\endproof

\begin{rem}\label{rem_btildekappa}
In the level 1 case, Leclerc and Thibon \cite{LeclercThibon2001} have made explicit the action
of some elements $S_\si\in\cH$, defined from the basis of Schur functions in the space of symmetric functions,
on the canonical basis of the Fock space, see \cite[Theorem 6.9]{LeclercThibon2001}.
This induces an action of $\cH$ at the combinatorial level, i.e. on the crystal on partitions:
the operator $S_\si$ acts on a partition $\la$ by adding $e$ times each part of $\si$ in $\la$.
For instance, if $e=3$,
$$
S_{\Yboxdim{5pt}\young(\,\,,\,,\,)}\; ( \; {\tiny \young(\,\,\,\,\,,\,\,\,)} \; )\; = 
{\tiny \young(\,\,\,\,\,,\,\,\,,\,\,,\,\,,\,\,,\,,\,,\,,\,,\,,\,)}.
$$
In the $1$-abacus representing $\la$, this amounts to shifting 
recursively the first trivial $e$-period $\si_k$ steps to the right,
where $\si=(\si_1,\si_2,\dots)$.
So this is exactly the same procedure as our map $\tb_{\si}$.
Hence, these maps $\tb_{\si}$ can be interpreted as generalisations of the operators $S_\si$
coming from the action of $\cH$ in the level 1 case.
However, throughout this paper, $l=1$ is not allowed. In fact, in the level $1$ case, the structure of the Fock space
is somewhat different since there is only one quantum group and the Heisenberg algebra $\cH$ acting.
\end{rem}

\subsection{Definition of the Heisenberg crystal}\label{heis_crys}\

We can now define an oriented colored graph structure on the set of charged partitions, 
by setting $|\la,s\rangle \overset{c}{\lra} |\mu,s\rangle$ if 
$\ka(\mu)$ is obtained from $\ka(\la)$ by adding a box $(a,b)$ such that $b-a=c$.
As usual, we define it on doubly highest weight vertices and we extend it as in (\ref{inducedbtilde}).
We call it the \textit{Heisenberg crystal}, or simply the $\cH$-crystal, of $\La^s$.

\begin{rem}\label{rem_heis_crys}
The rule for drawing an arrow in the Heisenberg crystal is in fact
the $\Uinf$-crystal graph rule on 
$\left\{ |\ka(\la),0 \rangle \, ; \, \la\in\Pi \right\},$
which is equal to 
$\left\{ |\si,0 \rangle \, ; \, \si\in\Pi \right\}$
by the surjectivity of $\ka$ explained in Remark \ref{rem_kappa_2}.
Hence, one can see the Heisenberg crystal as
the preimage under the map $\ka$ of the $\Uinf$-crystal
on the set of partitions,
which justifies the terminology ``crystal''.
\end{rem}

\medskip

Now, observe that the procedure $|\la,s\rangle \overset{c}{\lra} |\mu,s\rangle$ is in fact a composition of 
maps $\tb_{\pm\si}$, namely
$$
|\la,s\rangle \xrightarrow[]{\quad\tb_{-\ka(\la)}\quad} |\bar{\la},s\rangle
\xrightarrow[]{\quad\tb_{\ka(\mu)}\quad}|\mu,s\rangle.
$$
This is a generalisation of Formula (\ref{heis_map}).

Therefore, we call the map
\begin{equation}\label{heis_op}
\tb_{1,c} = \tb_{\ka(\mu)} \circ \tb_{-\ka(\la)}
\end{equation}
\textit{Heisenberg crystal operator},
and there is an arrow $|\la,s\rangle \overset{c}{\lra} |\mu,s\rangle$ in the Heisenberg crystal
if and only if $|\mu,s\rangle = \tb_{1,c} |\la,s\rangle$.
This is an analogous result to Theorem \ref{thm_flotw}, in the sense that
the Heisenberg crystal graph is explicitely described in combinatorial terms (via
an explicit formula of the Heisenberg crystal operator).
In fact, $\tb_{1,c}$ is an analogue for $\cH$ of the Kashiwara crystal operator $\tf_i$
for $\Ue$.

\begin{rem}\label{rem_heis_op} \
\begin{enumerate}
 \item The map $\tb_{1,c}$ can be seen as a ``weighted'' version of the map $\tb_1$ (Definition \ref{def_btilde+}),
in the sense that it shifts an $e$-period one step to the right in the $l$-abacus,
which is determined by $c$ (and is not necessarily the first trivial one).
\item By Remark \ref{rem_ribbon}, a Heisenberg crystal operator acts on a doubly highest weight vertex $|\bla,\bs\rangle$ by adding
an $e$-ribbon to the corresponding charged partition $|\bla,\bs\rangle$.
\item The terminology ``operator'' may seem abusive, since the Heisenberg crystal operators
are simply defined as combinatorial maps, unlike the Kashiwara crystal operators, which are operators on the vector space $L/qL$ 
(where $L$ is a crystal lattice at $q=0$).
It will be shown in \cite{Gerber2016a} that these maps are in fact specialisations at $q=0$ of some linear operators
on the $\C(q)$-vector space $\La^s$, in complete analogy with the Kashiwara crystal operators.
\end{enumerate}
\end{rem}

Each $l$-charge $\bs\in A(s)$ determines a connected component of the $\cH$-crystal.
A source vertex in the $\cH$-crystal is called a \textit{highest weight vertex}
(by analogy with the quantum group case):
it is a charged partition $|\la,s\rangle$ such that $\tb_{-\si}|\la,s\rangle = 0$ for all $\si\in\Pi$.
In other terms, the highest weight vertices in the $\cH$-crystal are the elements of the form
$|\bar{\la},s\rangle$ for some partition $\la$.
The number of arrows necessary to go from $|\bar{\la},s\rangle$ to $|\la,s\rangle$ 
in the $\cH$-crystal is called the \textit{depth} of $|\la,s\rangle$ and is equal to $|\ka(\la)|$.

\medskip

By definition, a map $\tb_{\si}$ (with $\si$ a partition) 
is a composition of maps of the form $\tb_{z}$ with $z$ positive integer.
We can now give an alternative description of $\tb_{\si}$ using composition of
Heisenberg crystal operators.
Let $\{ \ga_k \, ; \, k=1,\dots,|\si|\}$ be the set of boxes of $\si$, ordered from bottom
to top, and from right to left. If $\ga_k=(a_k,b_k)$ (row and column indices),
then write $c_k = b_k-a_k$. 
In particular, one always has $c_{|\si|}=0$.
We have
\begin{equation}\label{heis_dec}
\tb_{\si} = \tb_{1,c_1}\circ \tb_{1,c_2}\circ \dots \circ\tb_{1,c_{|\si|}}.
\end{equation}

\begin{thm}\label{thm_heis_op}
The Heisenberg crystal operators simultaneously commute with the $\Ue$-crystal operators
and with the $\Ul$-crystal operators when computed with respect to Indexation (\ref{indexation}).
\end{thm}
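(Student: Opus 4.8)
The plan is to exploit the fact, established in Propositions \ref{prop_btildekappa} and \ref{prop_btildekappa+}, that each of the maps $\tb_{-\si}$ and $\tb_{\si}$ (for $\si$ a partition) is a double crystal isomorphism, i.e.\ commutes with all the crystal operators $\te_i,\tf_i$ of $\Ue$ and $\tde_j,\tdf_j$ of $\Ul$. A composition of double crystal isomorphisms is again one, and by (\ref{heis_op}) the Heisenberg operator is $\tb_{1,c}=\tb_{\ka(\mu)}\circ\tb_{-\ka(\la)}$; so it will suffice to show that this a priori vertex-dependent expression can be read as a \emph{fixed} such composition on each connected component.

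The first step is to record that the map $\ka\colon B\lra\Pi$ of Remark \ref{rem_kappa_2} is constant on every connected component of the $\Ue$-crystal and on every connected component of the $\Ul$-crystal. Indeed, by Theorem \ref{thm_comm_crystal} the two crystal structures commute, so each orbit $\Om$ under the combined action of the crystal operators of $\Ue$ and of $\Ul$ possesses a single doubly highest weight vertex; since the value of $\ka$ on any vertex of $\Om$ is read off this vertex, it is a constant $\ka_\Om$. In particular, applying $\te_i$ or $\tf_i$ leaves $\ka$ unchanged, because it keeps us inside the same $\Ue$-component and hence inside the same $\Om$; the same holds for $\tde_j,\tdf_j$.

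The second step is the commutation itself. Fix such an orbit $\Om$ and let $\rho$ be the partition obtained from $\ka_\Om$ by adding the box of content $c$ (if no such addable box exists then $\tb_{1,c}$ vanishes on $\Om$ and there is nothing to prove). On $\Om$ formula (\ref{heis_op}) becomes $\tb_{1,c}=\tb_{\rho}\circ\tb_{-\ka_\Om}$, now a composition of two \emph{fixed} maps. For a vertex $v\in\Om$, the operator $\tf_i$ sends $v$ to another vertex of $\Om$, on which $\ka$ still equals $\ka_\Om$ (the case $\tf_i v=0$ being immediate since $\tb_{1,c}$ is a crystal isomorphism); using that $\tb_{-\ka_\Om}$ and $\tb_{\rho}$ each commute with $\tf_i$ we obtain
\begin{equation*}
\tb_{1,c}(\tf_i v)=\tb_{\rho}\,\tb_{-\ka_\Om}(\tf_i v)=\tb_{\rho}\,\tf_i\,\tb_{-\ka_\Om}(v)=\tf_i\,\tb_{\rho}\,\tb_{-\ka_\Om}(v)=\tf_i(\tb_{1,c}v),
\end{equation*}
and likewise with $\te_i$. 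Replacing $\tf_i$ by $\tde_j$ or $\tdf_j$ and invoking the \emph{double} (not merely single) crystal isomorphism property of $\tb_{-\ka_\Om}$ and $\tb_{\rho}$ yields the commutation with the $\Ul$-crystal operators in exactly the same way.

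The main obstacle is contained entirely in the first step: one must ensure that the vertex-dependent exponents $\ka(\la)$ and $\ka(\mu)$ appearing in (\ref{heis_op}) do not vary along a crystal path, so that the genuinely vertex-dependent map $\tb_{1,c}$ collapses to a composition of the fixed isomorphisms $\tb_{\pm\si}$ on each component. Once the constancy of $\ka$ is secured from the commutativity of the two crystals (Theorem \ref{thm_comm_crystal}), the remaining commutation is the purely formal computation above.
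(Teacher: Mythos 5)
Your proposal is correct and follows essentially the same route as the paper: the Heisenberg operator $\tb_{1,c}$ is written via (\ref{heis_op}) as a composition of $\tb_{-\ka}$ and $\tb_{\si}$, each of which is a double crystal isomorphism by Propositions \ref{prop_btildekappa} and \ref{prop_btildekappa+}. Your first step, making explicit that $\ka$ is constant on each combined orbit so that the vertex-dependent expression collapses to a fixed composition, is left implicit in the paper's one-line argument (it is built into the extension procedure (\ref{inducedbtilde})), and spelling it out is a reasonable addition rather than a departure.
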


\proof
By definition, the Heisenberg crystal operators are a composition of a map $\tb_{-\ka}$ and a map $\tb_{\si}$.
Both these maps are double crystal-isomorphisms by Propositions \ref{prop_btildekappa} and \ref{prop_btildekappa+}.
This proves the claim.
\endproof

To sum up, we have constructed a new crystal structure, so that we have in total three crystal structures on
the space $\La^s$:
\begin{itemize}
 \item[-] a $\Ue$-crystal,
\item[-] a $\Ul$-crystal,
\item[-] an $\cH$-crystal,
\end{itemize}
which are all explicited and pairwise commute
provided one uses the correspondence (\ref{indexation}) to switch between the different indexations.

\subsection{The decomposition theorem}\label{crys_decomp}\

\begin{num}
Let $r$ (respectively $t$) be a non-negative integer and, 
let $i_1,\dots,i_r$ (respectively $j_1,\dots,j_t$) be elements of $\{0,\dots,e-1\}$ (respectively $\{0,\dots,l-1\}$).
We denote
$$\tF_{(i_1 \dots i_r)} = \tf_{i_1}\dots\tf_{i_r} \mand \tdF_{(j_1 \dots j_t)} = \tdf_{j_1}\dots\tdf_{j_t}.$$ 
\end{num}

The following theorem says that every charged $l$-partition 
is obtained from the empty $l$-partition charged by an element of $A(s)$
by applying some crystal operators of $\Ue$, of $\cH$, and of $\Ul$.
So this is an analogue of Theorem \ref{thm_uglov} at the crystal level.

\begin{thm}\label{thm_crys_decomp}
For all charged $l$-partition $|\bla,\bs\rangle$, there exist $r,t\in\Z_{\geq0}$,
$i_1,\dots,i_r\in\{0,\dots,e-1\}$, $j_1,\dots,j_t\in\{0,\dots,l-1\}$,
and a partition $\si$ such that
$$|\bla,\bs\rangle = (\tdF_{(j_1 \dots j_t)} \circ \tb_{\si} \circ \tF_{(i_1 \dots i_r)}) \, \, |\bemp,\tbs\rangle,
$$
for some $\tbs\in A(s)$.
\end{thm}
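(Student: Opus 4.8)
The plan is to reverse the asserted equality: instead of building $|\bla,\bs\rangle$ up from an empty charged partition, I would peel it down to a doubly highest weight vertex and then to $|\bemp,\tbs\rangle$, record the (inverse) operators used, and reassemble them in the prescribed order at the end. The crucial point making such a reassembly legitimate is that the three families of crystal maps $\tf_i$, $\tdf_j$ and $\tb_{\si}$ commute pairwise, by Theorem \ref{thm_comm_crystal} and Theorem \ref{thm_heis_op}.

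Concretely, I would first climb the $\Ul$-crystal. Starting from $|\bla,\bs\rangle$ and applying the raising operators $\tde_j$ (which, under Correspondence (\ref{indexation}), strictly decrease the rank of the associated $e$-partition $\dbla'$, hence terminate), I reach the $\Ul$-highest weight vertex of the connected component. Inverting this path yields a word with $|\bla,\bs\rangle = \tdF_{(j_1\dots j_t)}|\bla^{(1)},\bs^{(1)}\rangle$, where $|\bla^{(1)},\bs^{(1)}\rangle$ is $\Ul$-highest weight. Next I climb the $\Ue$-crystal from $|\bla^{(1)},\bs^{(1)}\rangle$ using the operators $\te_i$. Because the $\Ue$- and $\Ul$-crystal operators commute (Theorem \ref{thm_comm_crystal}), applying $\te_i$ preserves the property of being $\Ul$-highest weight; since $\te_i$ decreases the rank, after finitely many steps I arrive at a vertex $|\bla^{(2)},\bs^{(2)}\rangle$ annihilated by every $\te_i$ and every $\tde_j$, that is, a doubly highest weight vertex. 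Inverting gives $|\bla^{(1)},\bs^{(1)}\rangle = \tF_{(i_1\dots i_r)}|\bla^{(2)},\bs^{(2)}\rangle$.

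It remains to reach $|\bla^{(2)},\bs^{(2)}\rangle$ from an empty charged partition. By Theorem \ref{thm_hw_vertices} this vertex is doubly highest weight, so Proposition \ref{prop_btildekappa_1} together with Remark \ref{rem_btilde_inv} gives $|\bla^{(2)},\bs^{(2)}\rangle = \tb_{\ka}|\bemp,\bs^{(2)}\rangle$ with $\ka = \ka(\bla^{(2)})$, and Proposition \ref{prop_charge} ensures $\bs^{(2)}\in A(s)$. Putting $\si=\ka$ and $\tbs=\bs^{(2)}$ and substituting, I obtain
$$|\bla,\bs\rangle = \tdF_{(j_1\dots j_t)}\circ\tF_{(i_1\dots i_r)}\circ\tb_{\si}\,|\bemp,\tbs\rangle.$$
Finally, since $\tb_{\si}$ commutes with the $\Ue$-crystal operators (Theorem \ref{thm_heis_op}), I may slide $\tb_{\si}$ leftward past $\tF_{(i_1\dots i_r)}$ to reach the required normal form $\tdF_{(j_1\dots j_t)}\circ\tb_{\si}\circ\tF_{(i_1\dots i_r)}$, completing the argument.

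The proof is essentially an assembly of the commutation and highest weight results already established, with no deep new computation. The point demanding the most care — and the one I would spell out explicitly — is the bookkeeping of the charge: the $\Ue$-operators and the Heisenberg maps fix the $l$-charge whereas the $\Ul$-operators may alter it, so one must verify that the charge $\tbs=\bs^{(2)}$ produced at the doubly highest weight vertex is the one landing in $A(s)$, which is precisely Proposition \ref{prop_charge}. A secondary subtlety is that the commutation relations must be invoked in the form that remains valid when an operator sends a vertex to $0$ (as when checking that climbing the $\Ue$-crystal preserves $\Ul$-highest weight); this is built into the crystal-theoretic statements of Theorems \ref{thm_comm_crystal} and \ref{thm_heis_op} and should be cited as such.
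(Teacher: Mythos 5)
Your argument is correct and follows essentially the same route as the paper: climb to a doubly highest weight vertex, strip it to $|\bemp,\tbs\rangle$ via $\tb_{-\ka}$ (Proposition \ref{prop_btildekappa_1}, with Proposition \ref{prop_charge} giving $\tbs\in A(s)$), and invert, using the commutation Theorems \ref{thm_comm_crystal} and \ref{thm_heis_op} to put the word in the stated form. The only immaterial difference is that the paper interleaves the Heisenberg step between the $\Ul$- and $\Ue$-climbs, so it lands directly on the normal form $\tdF_{(j_1\dots j_t)}\circ\tb_{\si}\circ\tF_{(i_1\dots i_r)}$ without your final commutation of $\tb_{\si}$ past $\tF_{(i_1\dots i_r)}$.
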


Here, we have implicitely used the correspondence (\ref{indexation})
to switch between the indexations by $l$-partitions, partitions, and $e$-partitions.
Note that by Identity (\ref{heis_dec}), the map $\tb_{\si}$ in the middle is indeed a composition of
Heisenberg crystal operators.

\proof We identify as usual the $l$-partitions, $e$-partitions, and $1$-partitions using (\ref{indexation}).
Starting from $|\la,s\rangle$, one first goes
back in the $\Ul$-crystal to the highest weight vertex, say $|\nu,s\rangle$.
One then computes $\tb_{-\ka(\nu)}|\nu,s\rangle=|\bar{\nu},s\rangle$.
Finally, one can go back in the $\Ue$-crystal to the highest weight vertex.
By Theorem \ref{thm_heis_op}, the order of these operations does not matter,
and by Proposition \ref{prop_btildekappa_1}, the resulting $l$-partition is empty,
and charged by an element of $A(s)$ according to Proposition \ref{prop_charge}.
\endproof

\subsection{An application using FLOTW multipartitions}\label{flotw}\

A consequence of Theorem \ref{thm_crys_decomp} is the existence of a labelling
of each charged $l$-partition by a triple consisting of 
a particular $l$-partition, a partition and a particular $e$-partition.
More precisely, let us introduce the convenient class of \textit{FLOTW} multipartitions.

\begin{defi} \label{def_flotw}
Let $\bla=(\la^1,\dots,\la^l)$ be an $l$-partition and $\bs=(s_1,\dots,s_l)$ be an $l$-charge in $A(s)$ (cf Notation \ref{fund_dom}).
For $j\in\{1,\dots,l\}$, write $(\la^j = (\la_1^j,\la_2^j,\dots)$.
We call $|\bla,\bs\rangle$ \textit{FLOTW} if the two following conditions are satisfied.
\begin{enumerate}\item \begin{itemize}
  \item $\la_k^j\geq \la_{k+s_{j+1}-s_j}^{j+1} \,\, \forall j\in\{ 1,\dots, l-1\}$ and $\forall k\geq 1$, and
\item $\la_k^l\geq \la_{k+e+s_1-s_l}^1\,,\forall k\geq 1$.
\end{itemize}
 \item For all $\al>0$, the residues of the rightmost boxes of the parts of size $\al$ do not cover $\{0,\dots, e-1 \}$.
\end{enumerate}
\end{defi}
Denote by $\Psi_{\bs}$ the set of FLOTW $l$-partitions with charge $\bs$,
and set $$\Psi=\{ |\bla,\bs\rangle \in\Psi_\bs \; |\; \bs\in A(s)\}.$$

We define similarly the level-rank duals $\dPsi_{\dbs}$ and $\dPsi$ by exchanging the roles of $e$ and $l$.

\begin{rem}
Throughout this paper, we have assumed that $l>1$.
This definition is however still valid when $l=1$. In this case, $l$-partitions are simply partitions,
and the FLOTW partitions are precisely the $e$-regular partitions (and in this case, the charge is insignificant).
\end{rem}

\begin{thm}[\mbox{\cite[Theorem 2.10]{FLOTW1999}}]\label{thm_flotw}
The vertices of the connected component of 
the $\Ue$-crystal graph of $\cF_{\bs,e}$ containing $|\bemp,\bs\rangle$
are the FLOTW $l$-partitions.
\end{thm}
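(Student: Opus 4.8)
The plan is to prove the two inclusions separately, exploiting that $|\bemp,\bs\rangle$ is a highest weight vertex: it has no removable boxes, so every $\te_i$ annihilates it, and its connected component is therefore the crystal of the irreducible highest weight module $V(\bs)$, with $|\bemp,\bs\rangle$ as its unique source. Since $\bs\in A(s)$, the empty $l$-partition satisfies both conditions of Definition \ref{def_flotw} vacuously, so $|\bemp,\bs\rangle$ is FLOTW. As the whole component is generated from $|\bemp,\bs\rangle$ by successive applications of the operators $\tf_i$, it will be enough to establish two facts: (a) the set of FLOTW $l$-partitions is stable under $\tf_i$ whenever the result is nonzero, and (b) every nonempty FLOTW $l$-partition admits a residue $i$ for which $\te_i$ acts nontrivially and preserves the FLOTW property. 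Fact (a) gives the inclusion ``component $\subseteq$ FLOTW'', and fact (b) gives ``FLOTW $\subseteq$ component'' by induction on the rank.

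For (a), I would use the combinatorial description of $\tf_i$ from Theorem \ref{thm_crystal_graph}: if $\tf_i|\bla,\bs\rangle = |\bmu,\bs\rangle \neq 0$, then $\bmu$ is obtained by adding the good addable $i$-box read off from the reduced $i$-word $\hw_i$. One then checks that both conditions survive. For the cylindricity condition (1), the key point is that the total order $<$ on $i$-boxes (Section \ref{add_rem_boxes}) is compatible with the cylindric nesting, so the good addable box always occupies a position for which adding it cannot break any inequality $\la_k^j \geq \la_{k+s_{j+1}-s_j}^{j+1}$; were it to break one, the reduction of $\hw_i$ would already have cancelled the corresponding sign against a box forced to its left by cylindricity. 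For the non-covering condition (2), adding the good box alters the rightmost-box residue data only for the two part-sizes it straddles, and one verifies directly that neither resulting family of residues can become all of $\{0,\dots,e-1\}$ without contradicting that the box added was the good one.

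For (b), I would argue by induction on $|\bla|$, the base case $\bla=\bemp$ being immediate. The crucial input is that a nonempty FLOTW $l$-partition is never a $\Ue$-highest weight vertex. Indeed, by Theorem \ref{thm_hw_vertices} a highest weight vertex has a totally $e$-periodic abacus, and a non-trivial $e$-period consists of $e$ beads whose $\be$-numbers are $e$ consecutive integers; translating these beads back to the rightmost boxes of their parts, and using condition (1) to see that they are rightmost boxes of parts of a common size, their residues would exhaust $\{0,\dots,e-1\}$, contradicting condition (2). Hence some $\te_i|\bla,\bs\rangle\neq 0$; for such an $i$ the operator removes the good removable $i$-box, and one checks exactly as in (a) --- now running the cylindricity and non-covering arguments in reverse --- that the resulting $|\bmu,\bs\rangle$ of rank $|\bla|-1$ is again FLOTW. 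By the induction hypothesis $|\bmu,\bs\rangle$ lies in the component of $|\bemp,\bs\rangle$, whence so does $|\bla,\bs\rangle = \tf_i|\bmu,\bs\rangle$.

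The main obstacle I anticipate is the simultaneous bookkeeping of the two FLOTW conditions against the crystal rule: condition (2) is not symmetric under addition and removal of boxes, and the delicate point is to show both that it is preserved by $\tf_i$ in step (a) and that its failure forces a full $e$-period in step (b). This is precisely where the compatibility between the order $<$ on $i$-boxes, the correspondence between beads and rightmost boxes in the abacus, and the cylindric inequalities of condition (1) must be made fully precise; once this compatibility is in hand, both inductions are routine.
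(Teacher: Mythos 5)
First, a point of reference: the paper does not prove this statement at all --- it is imported verbatim as \cite[Theorem 2.10]{FLOTW1999} --- so there is no internal proof to compare yours against. Your two-inclusion skeleton (closure of the FLOTW set under the $\tf_i$ for one inclusion; induction on the rank via a non-annihilating $\te_i$ preserving the FLOTW conditions for the other) is indeed the strategy of the original proof, and the framing via the highest weight crystal generated by $|\bemp,\bs\rangle$ is correct, as is the observation that $|\bemp,\bs\rangle$ is FLOTW precisely because $\bs\in A(s)$.

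As a proof, however, the proposal has a genuine gap: every load-bearing combinatorial verification is asserted rather than carried out, and these verifications are the entire content of the theorem. Concretely: (i) in step (a), the claim that adding the good addable $i$-box cannot violate the cylindricity inequalities ``because the reduction of $\hw_i$ would already have cancelled the corresponding sign'' is precisely the statement requiring a case analysis against the order $<$ of Section \ref{add_rem_boxes}; nothing written rules out the good box landing at a position where, say, $\la_k^l\geq \la_{k+e+s_1-s_l}^1$ fails, and the preservation of condition (2) is likewise only announced. (ii) In step (b), the assertion that the beads of a non-trivial $e$-period of a totally $e$-periodic FLOTW multipartition encode parts of a common size is itself a nontrivial lemma: the paper proves the analogous statement only for \emph{doubly} highest weight vertices (Lemma \ref{lem_double_2}) by a delicate induction, and condition (1) alone does not obviously force it when consecutive beads of the period sit on different rows. (iii) Still in step (b), you need not merely that some $\te_i$ acts nontrivially but that for such an $i$ the result is again FLOTW, so that the induction hypothesis applies; closure under $\te_i$ does not follow formally from closure under $\tf_i$ (as you note, condition (2) is not symmetric), so this is a second, independent verification. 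Finally, routing the ``no nonempty FLOTW highest weight vertex'' step through Theorem \ref{thm_hw_vertices} makes your argument depend on \cite{JaconLecouvey2012}; this is admissible only if that characterisation is established independently of the FLOTW theorem you are proving, which should be checked to avoid circularity.
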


The relevance of this theorem is that 
a priori, the vertices in the crystal graph of $\cF_{\bs,e}$ are computable, but only have a recursive definition:
one starts with the highest weight vertex and recursively applies some crystal operators of the form $\tf_i$;
whereas the FLOTW $l$-partitions have a more explicit (in particular non-recursive) combinatorial definition.

\begin{exa}
Take $e=4$, $l=2$ and $\bs=(0,1)$. Then the elements of $\Psi_{\bs}$ of rank $4$ are 
$$ {\tiny
\begin{array}{llll}
(\; \emptyset \; , \; \young(123) \;)
&
(\; \young(0) \; , \; \young(1,0) \;)
&
(\; \young(0) \; , \; \young(12) \;)
&
(\; \young(0,\moinsun) \; , \; \young(1) \;)
\\
(\; \young(0,\moinsun,\moinsdeux) \; , \; \emptyset \;)
&
(\; \young(01) \; , \; \young(1) \;)
&
(\; \young(01,\moinsun) \; , \; \emptyset \;)
&
(\; \young(012) \; , \; \emptyset \;)
.
\end{array}
}
$$
\end{exa}

\medskip

Recall that we had $B=\{ |\bla,\bs\rangle \; ; \; \bla\in\Pi_l \}$.

\begin{cor}\label{cor_param}
There is an injection
$$
\begin{array}{ccc}
B & \overset{\iota}{\longrightarrow}  & \Psi \times \Pi \times \dPsi. 
\end{array}
$$
\end{cor}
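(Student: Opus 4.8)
The plan is to read off, from each $|\bla,\bs\rangle$, one ``coordinate'' in each of the three commuting crystals and to package these as the desired triple. The backbone is Theorem \ref{thm_crys_decomp}, which says that every vertex is reached from a vertex $|\bemp,\tbs\rangle$ with $\tbs\in A(s)$ by applying $\Ue$-, $\cH$- and $\Ul$-crystal operators, together with the structural fact that these three families pairwise commute (Theorems \ref{thm_comm_crystal} and \ref{thm_heis_op}). The consequence I will use repeatedly is that each family of crystal operators is simultaneously a crystal isomorphism for the other two structures; hence the three operations of ``retracting to the highest weight vertex'' in each crystal pairwise commute, and are therefore well defined irrespective of the order and of the chosen reduced words.

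First I would define the three coordinates. Retracting $|\bla,\bs\rangle$ in both the $\Ul$-crystal (via the $\tde_j,\tdf_j$) and the $\cH$-crystal (via the $\tb_{-\si}$) yields a vertex $A$ that is simultaneously $\Ul$- and $\cH$-highest weight; since the two retractions commute, $A$ is well defined. Retracting $A$ once more in the $\Ue$-crystal gives a triply highest weight vertex, which by Theorem \ref{thm_crys_decomp} and Proposition \ref{prop_charge} must be some $|\bemp,\tbs\rangle$ with $\tbs\in A(s)$; thus $A$ lies in the $\Ue$-connected component of $|\bemp,\tbs\rangle$ and is a FLOTW $l$-partition by Theorem \ref{thm_flotw}, i.e. $A\in\Psi$. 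Symmetrically, retracting in the $\Ue$- and $\cH$-crystals produces, through Correspondence (\ref{indexation}), a FLOTW $e$-partition $C\in\dPsi$ by the level-rank dual of Theorem \ref{thm_flotw}. Finally, retracting in the $\Ue$- and $\Ul$-crystals yields a doubly highest weight vertex $d$, and I set $\si=\ka(d)\in\Pi$. I then put $\iota(|\bla,\bs\rangle)=(A,\si,C)$.

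Well-definedness of $\iota$ is precisely the commutativity of the three retractions noted above; in particular $\si$ is intrinsic because the $\Ue$- and $\Ul$-operators are $\cH$-crystal isomorphisms, so the Heisenberg position recorded by $\ka(d)$ is constant along $\Ue$- and $\Ul$-orbits. For injectivity I would reconstruct $|\bla,\bs\rangle$ from $(A,\si,C)$ and verify uniqueness step by step. The FLOTW $l$-partition $A$ carries the charge $\tbs\in A(s)$ and the position of $|\bla,\bs\rangle$ in its $\Ue$-component. Applying the Heisenberg raising map $\tb_{\si}$ to $A$ undoes the $\cH$-retraction; since $\tb_{\si}$ is a double crystal isomorphism (Propositions \ref{prop_btildekappa} and \ref{prop_btildekappa+}), the vertex $\tb_{\si}A$ is again $\Ul$-highest weight and is the $\Ul$-highest weight vertex of the component of $|\bla,\bs\rangle$. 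Then $C\in\dPsi$ specifies, via the dual of Theorem \ref{thm_flotw} and Correspondence (\ref{indexation}), the exact position of $|\bla,\bs\rangle$ inside the $\Ul$-component of $\tb_{\si}A$, so applying the corresponding $\Ul$-lowering operators recovers $|\bla,\bs\rangle$. As each of these three moves is invertible on its domain, the triple determines the vertex uniquely, giving injectivity.

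The step I expect to be the main obstacle is the legitimacy of encoding a ``position in a crystal component'' by a FLOTW multipartition when the relevant highest weight vertex is not itself an empty multipartition. This is resolved by the observation that the retraction maps employed are crystal isomorphisms for the two remaining structures, so they carry the $\Ue$- (respectively $\Ul$-) component of $|\bla,\bs\rangle$ isomorphically onto the principal component of an empty multipartition, where Theorem \ref{thm_flotw} (respectively its level-rank dual) applies and the FLOTW label is defined. Checking carefully that these isomorphisms intertwine the coordinate assignment with the reconstruction is the technical heart of the argument; everything else is bookkeeping with Correspondence (\ref{indexation}).
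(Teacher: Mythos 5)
Your proposal is correct and follows essentially the same route as the paper: both extract the three coordinates from the decomposition of Theorem \ref{thm_crys_decomp} (your pairwise retractions produce exactly the components $\tF_{(i_1\dots i_r)}|\bemp,\tbs\rangle$, $\si$, and $\tdF_{(j_1\dots j_t)}|\dbemp,\dtbs\rangle$ of that decomposition), and both deduce injectivity from the pairwise commutation of the three families of crystal operators (Theorems \ref{thm_comm_crystal} and \ref{thm_heis_op}) together with Theorem \ref{thm_flotw}. The paper phrases the injectivity step as a chain of equalities moving operators past one another rather than as a reconstruction, but the content is identical.
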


\proof
By Theorem \ref{thm_crys_decomp}, one can always decompose any element of $B$ as follows
$$|\bla,\bs\rangle = (\tdF_{(j_1 \dots j_t)} \circ \tb_{\si} \circ \tF_{(i_1 \dots i_r)}) \, \, |\bemp,\tbs\rangle.
$$
Now, by Theorem \ref{thm_flotw}, the $l$-partition $\tF_{(i_1 \dots i_r)}  |\bemp,\tbs\rangle$ is FLOTW,
i.e. an element of $\Psi$. Denote it $|\bmu,\br\rangle$.
Similarly $\tdF_{(j_1 \dots j_r)}  |\dbemp,\dtbs\rangle \in \dPsi$, and we denote it
$|\dbnu,\dbt\rangle$.
Therefore, we get a map 
$$
\begin{array}{cccc}
\iota: & B & \lra  & \Psi \times \Pi \times \dPsi \\
 & |\bla,\bs\rangle & \xleftrightarrow[]{\hspace{1.2cm}} & (|\bmu,\br\rangle,\si,|\dbnu,\dbt\rangle).
\end{array}
$$
To show that $\iota$ is injective, let $|\bla^{(1)},\bs\rangle$ and $|\bla^{(2)},\bs\rangle$ be two
elements of $B$ such that $\iota(|\bla^{(1)},\bs\rangle)=\iota(|\bla^{(2)},\bs\rangle)$.
For $k=1,2$, write $$|\bla^{(k)},\bs\rangle = (\tdF_{(j_1^{(k)} \dots j_{t_k}^{(k)})} \circ \tb_{\si^{(k)}} \circ \tF_{(i_1^{(k)} \dots i_{r_k}^{(k)})}) \, \, |\bemp,\tbs^{(k)}\rangle$$
using Theorem \ref{thm_flotw}.
By definition of $\iota$ and since $\iota(|\bla^{(1)},\bs\rangle)=\iota(|\bla^{(2)},\bs\rangle)$, we have
\begin{itemize}
\item $\tF_{(i_1^{(1)} \dots i_{r_1}^{(1)})}  |\bemp,\tbs^{(1)}\rangle=\tF_{(i_1^{(2)} \dots i_{r_2}^{(2)})}  |\bemp,\tbs^{(2)}\rangle$
\item $\tdF_{(j_1^{(1)} \dots j_{t_1}^{(1)})} |\dbemp,\dtbs^{(1)}\rangle = \tdF_{(j_1^{(2)} \dots j_{t_2}^{(2)})} |\dbemp,\dtbs^{(2)}\rangle$
\item $\tb_{\si^{(1)}} |\la^{(1)},\ts\rangle = \tb_{\si^{(2)}} |\la^{(2)},\ts\rangle$.
\end{itemize}
So in particular, $r_1=r_2=:r$, $t_1=t_2=:t$, $\tbs^{(1)}=\tbs^{(2)}=:\tbs$ and $\dtbs^{(1)}=\dtbs^{(2)}=:\dtbs$.
Using the commutation properties of Theorems \ref{thm_comm_crystal} and \ref{thm_heis_op}, we have
$$
\begin{array}{rcl}
|\bla^{(1)},\bs\rangle & = & \tdF_{(j_1^{(1)} \dots j_{t}^{(1)})} \circ \tb_{\si^{(1)}} \circ \tF_{(i_1^{(1)} \dots i_{r}^{(1)})} \, \, |\bemp,\tbs\rangle \\
& = & \tdF_{(j_1^{(1)} \dots j_{t}^{(1)})} \circ \tb_{\si^{(1)}} \circ \tF_{(i_1^{(2)} \dots i_{r}^{(2)})} \, \, |\bemp,\tbs\rangle \\
& = & \tF_{(i_1^{(2)} \dots i_{r}^{(2)})} \circ \tdF_{(j_1^{(1)} \dots j_{t}^{(1)})} \circ \tb_{\si^{(1)}}   \, \, |\emptyset,\ts\rangle \\
& = & \tF_{(i_1^{(2)} \dots i_{r}^{(2)})} \circ \tdF_{(j_1^{(1)} \dots j_{t}^{(1)})} \circ \tb_{\si^{(2)}}   \, \, |\emptyset,\ts\rangle \\
& = &  \tF_{(i_1^{(2)} \dots i_{r}^{(2)})} \circ \tb_{\si^{(2)}} \circ \tdF_{(j_1^{(1)} \dots j_{t}^{(1)})}   \, \, |\dbemp,\dtbs\rangle \\
& = &  \tF_{(i_1^{(2)} \dots i_{r}^{(2)})} \circ \tb_{\si^{(2)}} \circ \tdF_{(j_1^{(2)} \dots j_{t}^{(2)})}   \, \, |\dbemp,\dtbs\rangle \\
& = & |\bla^{(2)},\bs\rangle.
\end{array}
$$
\endproof

\renewcommand{\Im}{\operatorname{Im}}

\begin{rem}\label{rem_param}
The map $\iota$ is not surjective, since for arbitrary
$|\bmu,\br\rangle\in\Psi$ and $|\dbnu,\dbt\rangle\in \dPsi$,
one does not necessary have $|\bemp,\br\rangle \longleftrightarrow |\dbemp, \dbt\rangle$ in Correspondence \ref{indexation}.
In fact, the image of $\iota$ is of the form $\Im(\iota)=\Phi\times\Pi\times\dot{\Phi}$,
where  ${\Phi}\subset\Psi$ and $\dot{\Phi}\subset\dPsi$.
In \cite[Remark 6.5]{Gerber2015}, we have constructed an affine analogue of the Robinson-Schensted correspondence,
which maps bijectively an element $|\bla,\bs\rangle\in B$ to a pair consisting of an FLOTW $l$-partition 
$|\bmu,\br\rangle$ and a combinatorial ``recording data'' $(\underline{\sQ},\underline{\al})$.
The FLOTW $l$-partition is precisely the one appearing in the above theorem.
It would be interesting to determine $\Phi$ and $\dot{\Phi}$ and to investigate the relationship
between $\Phi\times\Pi\times\dot{\Phi}$ and the set of pairs  $(|\bmu,\br\rangle , (\underline{\sQ},\underline{\al}))$ appearing in \cite{Gerber2015}.
\end{rem}

\section{Application to the representation theory of cyclotomic rational Cherednik algebras}\label{cher}

For a charge $\bs\in\Z^l(s)$ and a non-negative integer $n$, one associates the Cherednik algebra $H_{\bc,n}$
with parameter $\bc = (-\frac{1}{e},\bs)$
arising from the complex reflection group $G(l,1,n)= (\Z/l\Z)^n \rtimes \fS_n$
(this is the so-called cyclotomic case).
The parameter $\bc$ is sometimes expressed differently in the literature.
For some background, one can refer to e.g. \cite{Shan2011}.

There is a corresponding category $\cO$, see \cite{GGOR2003} for its definition, 
denoted $\cO_{\bc,n}$, and one can consider, for $n$ varying, all categories $\cO_{\bc,n}$
together. Denote it $\cO_\bc$.
The simple objects in $\cO_\bc$ are parametrised by the elements of $\Irr( G(l,1,n) )$ for $n$ varying,
i.e. by $l$-partitions.

It is known that the Fock space plays an important role in the representation theory of 
$H_{\bc,n}$, with $n$ varying, via categorification phenomenons.
In particular, the crystal of $\cF_{\bs,e}$ is categorified by the branching rule on $H_{\bc,n}$ with $n$ varying,
where the Kashiwara operators $\te_i$ (respectively $\tf_i$) correspond to the parabolic restriction 
(respectively induction) in $\cO_\bc$, see Shan \cite[Theorem 6.3]{Shan2011} and Losev \cite[Theorem 5.1]{Losev2013}.

Moreover, the action of the Heisenberg algebra (cf Section \ref{sec_uglov})
has also been categorified by Shan and Vasserot \cite{ShanVasserot2012},
and some of the associated combinatorics has been recently studied by Losev \cite{Losev2015}.

\subsection{Interpretation of the crystal level-rank duality}\label{cher_level_rank}\

Recall that the crystal level-rank duality is given by Correspondence (\ref{indexation}),
thanks to which one can compute the two commuting quantum group crystals, see Section \ref{comm_crystals}.
This is a combination of Uglov's level-rank duality given by Correspondence (\ref{indexation_ug})
and of the conjugation isomorphism.

The categorical interpretations of these dualities have been studied by Shan, Varagnolo and Vasserot \cite{ShanVaragnoloVasserot2014a}, 
by Rouquier, Shan, Varagnolo and Vasserot \cite{RSVV2016}
and by Webster \cite{Webster2013a} to prove conjectures of Rouquier \cite{Rouquier2008} and \cite{ChuangMiyachi2011}.
In fact, it is known that Uglov's level-rank duality between $\cF_{\bs,l}$ and $\cF_{\dbs,e}$ is categorified by
the Koszul duality between the corresponding Cherednik categories $\cO$, see for instance \cite[Section 6]{Webster2013a}.
Moreover, conjugating is categorified by the Ringel duality \cite[Section 6.2.2]{RSVV2016}.
The composition of both, that is Correspondence (\ref{indexation}) is therefore categorified by the composition of the Ringel and Koszul dualities,
sending simple to tilting modules in the respective categories $\cO$ and
giving rise to an equivalence of bounded derived categories \cite[Conjecture 6]{ChuangMiyachi2011}, \cite[Theorem 7.4]{RSVV2016}.
It commutes with the categorical $\sle$-crystal and $\sll$-crystal arising from Bezrukavnikov-Etingof's parabolic induction functors in the corresponding categories $\cO$ \cite{Shan2011}.

\subsection{Propagation in the $\Ul$-crystal and compatibility with the results of Losev}\label{cher_losev}\

In \cite{Losev2015}, Losev has introduced a combinatorial recipe to compute a so-called $\fs\fl_\infty$-crystal
\footnote{This terminology is justified by the same kind of arguments as that of Remark \ref{rem_heis_crys}.}
on the set of charged $l$-partitions which reflects, at a combinatorial level, an abstract crystal structure on the set of classes of simple objects
in the category $\cO_e$, arising from the action of the Heisenberg algebra at a categorical level
(whose existence goes back to Shan and Vasserot \cite{ShanVasserot2012}).

This recipe consists of two ingredients:
\begin{itemize}
 \item An explicit description of some operators $\ta_\si$ 
 (parametrised by partitions $\si$, first introduced in \cite{ShanVasserot2012}) 
 on charged $l$-partitions, in the case where the $l$-charge is \textit{asymptotic}.
 \item A formula for \textit{wall-crossing bijections}, that permits to pass from the asymptotic case to the general case.
\end{itemize}
Notice that the formula for these wall-crossing bijections is unfortunately not very explicit.
Moreover, these ingredients are introduced for highest weight vertices in the $\Ue$-crystal;
however, the commutation of this $\fs\fl_\infty$-crystal with the $\Ue$-crystal
ensures that one can extend it to the whole set of partitions (see \cite[Remark 5.4]{Losev2015}).
Finally, Losev does not use the combinatorial level-rank duality at all, and there is no triple crystal structure involved.
In this section, we will show that Losev's $\fs\fl_\infty$-crystal coincides with the Heisenberg crystal introduced in Section \ref{btilde_inv} above.

\medskip

Let $j_0\in\{1,\dots,l\}$. An $l$-charge $\bs$ is called \textit{$j_0$-asymptotic}
if there exists a positive integer $N$ such that $s_{j_0} > s_j+N$ for all $j\in\{1,\dots,l-1\}, j\neq j_0$.
Actually, in what follows, we will consider the maximal such $N$ for simplicity.
In this case, we will also call an element $|\bla,\bs\rangle$ \textit{asymptotic (charged) multipartition}.

\begin{lem}\label{lemosev}
Let $\bs$ be an $j_0$-asymptotic $l$-charge.
If $|\bla,\bs\rangle$ is a highest weight vertex in the $\Ue$-crystal such that $|\bla|\leq N$, 
then there exists a partition $\theta=(\theta_1,\theta_2,\dots)$ such that
$\la^{j_0} = (\theta_1^e,\theta_2^e,\dots)$.
\end{lem}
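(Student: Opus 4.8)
The plan is to argue by (strong) induction on the rank $|\bla|$, using the characterisation of Theorem \ref{thm_hw_vertices}: $|\bla,\bs\rangle$ is a $\Ue$-highest weight vertex if and only if $\cA(\bla,\bs)$ is totally $e$-periodic. If $\la^{j_0}=\emptyset$ there is nothing to prove (take $\theta=\emptyset$), so I assume $\la^{j_0}\neq\emptyset$. Writing $\be_k^j=\la_k^j+s_j-k+1$ for the $\be$-numbers of $\cA(\bla,\bs)$, I first record the elementary consequence of the asymptotic hypothesis: for every $j\neq j_0$ and every $k\geq1$ one has $\be_k^j\leq\la_1^j+s_j\leq|\bla|+s_j<s_{j_0}\leq\be_1^{j_0}$, since $s_{j_0}-s_j>N\geq|\bla|$. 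Hence the greatest $\be$-number of $\cA(\bla,\bs)$ lies on row $j_0$ and equals $\be_1^{j_0}=\la_1^{j_0}+s_{j_0}$, so the first $e$-period $P$ starts on row $j_0$.

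The heart of the argument, which I expect to be the main obstacle, is to show that $P$ is entirely contained in row $j_0$. Let $r$ be the multiplicity of the top part $\la_1^{j_0}$ in $\la^{j_0}$. Then the beads $\be_1^{j_0},\be_1^{j_0}-1,\dots,\be_1^{j_0}-r+1$ all lie on row $j_0$ and are consecutive, while the position $g:=\be_1^{j_0}-r$ is empty on row $j_0$. If $r\geq e$, then $P$ consists of the first $e$ of these beads and is confined to row $j_0$. Suppose instead $r<e$. Then $P$ must continue below $g$, and since $g$ is empty on row $j_0$ this forces a bead $(j,g)\in\cA(\bla,\bs)$ with $j<j_0$, say $g=\be_k^j$. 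I would rule this out by a rank-budget estimate: from $|\la^{j_0}|\geq r\la_1^{j_0}$ one gets $\la_k^j\leq|\bla|-r\la_1^{j_0}$, while $g=\be_k^j$ together with $s_{j_0}-s_j>N\geq|\bla|$ gives
\begin{equation*}
|\bla|<s_{j_0}-s_j=\la_k^j-\la_1^{j_0}+r-k+1\leq|\bla|-(r+1)\la_1^{j_0}+r-k+1,
\end{equation*}
whence $(r+1)\la_1^{j_0}+k\leq r$, which is impossible since $\la_1^{j_0}\geq1$ and $k\geq1$. Therefore $r\geq e$, and in particular $\la_1^{j_0}=\la_2^{j_0}=\dots=\la_e^{j_0}$.

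Finally I would remove the period $P$, i.e.\ delete the top $e$ beads of row $j_0$. This produces a charged $l$-partition $|\bmu,\bs'\rangle$ with $\mu^{j}=\la^{j}$ for $j\neq j_0$, with $\mu^{j_0}=(\la_{e+1}^{j_0},\la_{e+2}^{j_0},\dots)$, and with $\bs'$ obtained from $\bs$ by replacing $s_{j_0}$ by $s_{j_0}-e$. By definition of total $e$-periodicity, $\cA(\bmu,\bs')=\cA(\bla,\bs)\backslash P$ is again totally $e$-periodic, hence a $\Ue$-highest weight vertex; its charge is still $j_0$-asymptotic (now with constant $N-e$, since $s_{j_0}-e-s_j>N-e$), and $|\bmu|=|\bla|-e\la_1^{j_0}\leq N-e$. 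The induction hypothesis then yields a partition $(\theta_2,\theta_3,\dots)$ with $\mu^{j_0}=(\theta_2^{\,e},\theta_3^{\,e},\dots)$, and since $\la_1^{j_0}=\la_e^{j_0}\geq\la_{e+1}^{j_0}=\theta_2$, setting $\theta_1=\la_1^{j_0}$ gives a partition $\theta$ with $\la^{j_0}=(\theta_1^{\,e},\theta_2^{\,e},\dots)$, as required.
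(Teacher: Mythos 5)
Your proof is correct and follows essentially the same route as the paper's: the paper likewise reduces the statement, via Theorem \ref{thm_hw_vertices} and the rank bound $|\bla|\leq N$, to the claim that the first $e$-periods of $\cA(\bla,\bs)$ lie entirely in row $j_0$, so that each consists of $e$ consecutive beads encoding $e$ equal parts of $\la^{j_0}$. Your inductive removal of one period at a time and the explicit rank-budget estimate simply supply the details that the paper's proof leaves implicit in the phrase ``in view of the condition on the rank of $\bla$''.
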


\proof
Because of Theorem \ref{thm_hw_vertices}, we know that $\cA(\bla,\bs)$ is totally $e$-periodic.
In view of the condition on the rank of $\bla$, the first periods of $\cA(\bla,\bs)$ consist
only of elements of the form $(j,\be)$ with $j=j_0$ (in other terms, the first
periods are entirely included in the $j_0$-th row of the abacus).
Hence, the partition $\la^{j_0}$ is of the form $(\theta_1^e,\theta_2^e,\dots)$ for some non-negative integers $\theta_i$.
\endproof

Notice that this partition $\theta$ is constructed in a similar way as the partition $\ka$ for doubly highest weight vertices
(except that for $\theta$, one focuses exclusively on the $j_0$-th component of $|\bla,\bs\rangle$).
We will show in Theorem \ref{thmosev2} that $\theta$ is in fact the partition $\ka$ associated to the corresponding 
doubly highest weight vertex.

Let us now recall the result of Losev that is relevant in our context.
Let $\bs$ be an $j_0$-asymptotic $l$-charge, and
$|\bla,\bs\rangle$ be a highest weight vertex in the $\Ue$-crystal such that $|\bla|+e|\theta|\leq N$
(cf Lemma \ref{lemosev} above).
The following is \cite[Section 5.1.2 and Proposition 5.3]{Losev2015}.

\begin{thm}\label{thmosev}\
\begin{enumerate}
 \item The depth of $|\bla,\bs\rangle$ in the $\fs\fl_\infty$-crystal is equal to the rank of $\theta$.
\item If $\theta=\emptyset$ and $\si=(\si_1,\si_2,\dots)$ is a partition such that $|\bla|+e|\si|\leq N$, then
$\ta_\si (|\bla,\bs\rangle)=|\bmu,\bs\rangle$, where
$\mu^j=\la^j$ for all $j\neq j_0$, and $\mu^{j_0}=(\si_1^e,\si_2^e,\dots)$.
\item The $\fs\fl_\infty$-crystal commutes with the $\Ue$-crystal.
\end{enumerate}
\end{thm}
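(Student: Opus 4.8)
The plan is to recognise that this statement is essentially a compilation of results of Losev, announced already by the sentence preceding it, so the proof reduces in the main to citing \cite[Section 5.1.2 and Proposition 5.3]{Losev2015}. Rather than reprove the categorical input, I would organise the verification of the three claims in the combinatorial (abacus) language used throughout this paper, making explicit which piece is genuinely imported from \emph{loc. cit.} and which can be read off directly here; this translation is precisely what will be needed to compare with the Heisenberg crystal of Section \ref{btilde_inv} in the subsequent Theorem \ref{thmosev2}.

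For part (2) I would argue directly on $\cA(\bla,\bs)$. The hypothesis $\theta=\emptyset$, together with Lemma \ref{lemosev} and the $j_0$-asymptotic condition, forces the first $e$-periods relevant to the Heisenberg action to be concentrated in the $j_0$-th row of the abacus, so the dominant component $j_0$ governs the entire computation. The operators $\ta_\si$, which come from the Heisenberg categorical action of \cite{ShanVasserot2012}, localise in the asymptotic regime onto this component: each part $\si_i$ contributes a vertical $e$-strip, yielding $\mu^{j_0}=(\si_1^e,\si_2^e,\dots)$ and leaving $\mu^j=\la^j$ for $j\neq j_0$. The one ingredient I cannot manufacture purely combinatorially is the identification of $\ta_\si$ with the Shan--Vasserot operator in this regime; that identification is exactly the content of \cite{Losev2015}.

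For part (1), since Lemma \ref{lemosev} gives $\la^{j_0}=(\theta_1^e,\theta_2^e,\dots)$, I would invert part (2): starting from the $\Ue$-highest weight vertex with $\theta=\emptyset$ and reaching $|\bla,\bs\rangle$, each box of $\theta$ accounts for exactly one unit of depth in the $\fs\fl_\infty$-crystal, so the depth equals the rank of $\theta$. For part (3), the commutation of the $\fs\fl_\infty$-crystal with the $\Ue$-crystal I would deduce from the commutation of the underlying categorical actions established by Shan and Vasserot, passing to crystals in the same spirit as Lemma \ref{lem_comm_crystals}. The main obstacle is structural rather than computational: the $\fs\fl_\infty$-crystal and the operators $\ta_\si$ are defined \emph{categorically} in \cite{Losev2015}, and the wall-crossing bijections relating the asymptotic and general cases are not given explicitly there, so a fully self-contained combinatorial proof is not available from the material of this excerpt. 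The honest route is therefore to quote Losev's theorem and to record, as above, how its hypotheses and conclusions read on the abacus, leaving the genuine identification with the Heisenberg crystal of Section \ref{btilde_inv} to Theorem \ref{thmosev2}.
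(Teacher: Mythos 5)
Your proposal matches the paper exactly: Theorem \ref{thmosev} is not proved there but is explicitly presented as a recalled result, introduced by ``The following is \cite[Section 5.1.2 and Proposition 5.3]{Losev2015}'' and followed only by a remark on the rephrasing conventions. Your recognition that the statement is imported from Losev, together with your observation that the abacus translation belongs to the later comparison in Theorem \ref{thmosev2} rather than to a proof of this statement, is precisely the paper's stance.
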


Here, we have slightly ``rephrased'' the original result of Losev.
In particular, the notion of asymptoticity must be reversed in order to be compatible with the language of Fock spaces,
as well as the convention on ``multiplicating/dividing'' of partitions by $e$ 
(one recovers Losev's convention by conjugating, see also \cite[Section 5.5]{Losev2015}).

\begin{exa}
Take $l=2$, $\bla = (1^3,\emptyset)$ , $\bs= (0,14)$, $e=3$ and $\si=(2,1)$.
So we have $j_0=2$ and $N=s_2-s_1-1=13$. Since $|\bla|+e|\si|=3+3.3=12$,
we are in the conditions of the theorem.
The abacus of $|\bla,\bs\rangle$ is
\begin{figure}[H] 
\includegraphics{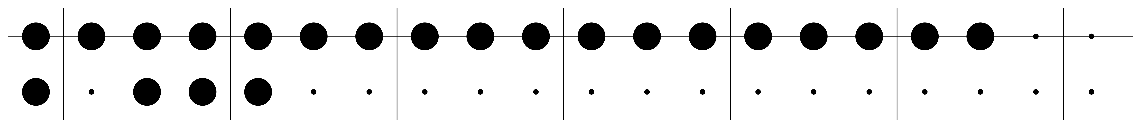}
\label{ab22}
\end{figure}
Applying $\ta_\si$, we get the following abacus
\begin{figure}[H] 
\includegraphics{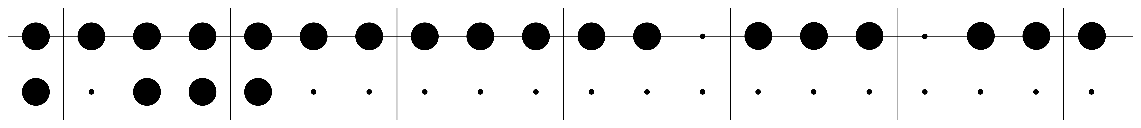}
\label{ab23}
\end{figure}
So we see that $\ta_\si$ acts by shifting periods in $\la^{j_0}$ to the right according to $\si$:
the first period is shifted two steps and the second one step.
\end{exa}

One first thing to notice is that Losev's formula for $\ta_\si$ is 
similar to the formula of the operators $\tb_\si$ of Section \ref{heiscrys}
(shifting $e$-periods to the right).
However, one sees that the property of being asymptotic is somehow antagonistic to the property of being
a doubly highest weight vertex. More precisely, a doubly highest weight vertex can never be asymptotic;
and conversely, an asymptotic multipartition can never be a doubly highest weight vertex (except for the trivial cases).
This is clear for instance looking at Proposition \ref{prop_charge}.
Still, we have explained how to extend the definition of the new operators to the whole set of partitions, in (\ref{inducedbtilde}).
In this section, we will show that the operator $\ta_\si$ 
actually coincides with $\tb_\si$ for all partition $\si$.
Moreover, we show that the partition $\theta$ arising in the asymptotic case is in fact
equal to the partition $\ka$ arising in the doubly highest weight vertex case.
Note that the maps $\tb_{-1}$ and $\tb_{-\ka}$ implicitely then corresponds to 
taking the inverse maps to $\ta_{(1)}$ and $\ta_{\ka}$.

\medskip

For every asymptotic charged $l$-partition $|\bla,\bs\rangle$ which is a highest weight vertex in the $\Ue$-crystal, 
one can consider the corresponding doubly highest weight vertex. 
One can apply to it an operator $\tb_{-\ka}$, and go back to the highest weight vetex in the $\Ul$-crystal 
to get the corresponding $l$-partition $|\bmu,\br\rangle=\tb_{-\ka}|\bla,\bs\rangle$
(cf Procedure (\ref{inducedbtilde})).
In fact, the propagation in the $\Ul$-crystal turns out to have a nice description:
acting by $\tb_{-\ka}$  and by $\tb_\si$ on $|\bla,\bs\rangle$ is
combinatorially ``the same'' as acting on doubly highest weight vertices (i.e. shifting $e$-periods to the left), 
as is stated in the next proposition.

\begin{prop}\label{prop_losev_1}
Let $|\bla,\bs\rangle$ be a highest weight vertex in the $\Ue$-crystal.
Write $|{\tbla},\tbs\rangle$ for
the corresponding doubly highest weight vertex, and set $\ka=\ka(|{\tbla},\tbs\rangle)=(\ka_1,\ka_2,\dots,\ka_{N})$.
Then $|\bla,\bs\rangle$ has at least $N$ non-trivial periods, and
\begin{enumerate}
\item $\tb_{-\ka}$ acts on $|\bla,\bs\rangle$ by shifting the $k$-th period 
of $\cA(\bla,\bs)$ by $\ka_k$ steps to the left, for all $k=1,\dots,N$, 
starting from $k=N, \dots$ and finishing by $k=1$,
\item if $\ka=\emptyset$, then for all partition $\si=(\si_1,\si_2,\dots,\si_r)$, $\tb_{\si}$ acts on $|\bla,\bs\rangle$ 
by shifting the $k$-th period of $\cA(\bla,\bs)$ by $\si_k$ steps to the right, for all $k=1,\dots,r$, 
starting from $k=r, \dots$ and finishing by $k=1$.
\end{enumerate}
\end{prop}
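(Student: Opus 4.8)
The plan is to reduce the whole statement to the behaviour of the elementary operator $\tb_{-1}$, and to prove, by induction on the distance from $|{\tbla},\tbs\rangle$ in the $\Ul$-crystal, that on \emph{every} $\Ue$-highest weight vertex $\tb_{-1}$ acts on the $l$-abacus by shifting its last non-trivial $e$-period one step to the left. First I would unwind the definitions: since $\tb_{-\ka}=\tb_{-\ka_1}\circ\dots\circ\tb_{-\ka_N}$ (Definition \ref{def_btildepart}) and each $\tb_{-\ka_k}$ is, by Remark \ref{rem_btildez}, a composition of copies of $\tb_{-1}$, it suffices to control $\tb_{-1}$ and then to verify the bookkeeping that successive applications peel off the periods in the order $k=N,\dots,1$. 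The base case of the induction is exactly Definition \ref{def_btilde} with Remark \ref{rem_btilde}: on the doubly highest weight vertex $|{\tbla},\tbs\rangle$ the operator $\tb_{-1}$ is the period shift $\varphi_{P}$ with $P$ the last non-trivial period, and Lemma \ref{lem_double_2} guarantees that its beads encode parts of equal size $\ka_N$, so that shifting it $\ka_N$ steps makes it trivial and the next operator correctly addresses the $(N-1)$-st period, and so on.

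For the inductive step I would write $|\bla,\bs\rangle=\tdf_j|\bla_0,\bs\rangle$, where $|\bla_0,\bs\rangle$ is one step closer to $|{\tbla},\tbs\rangle$ in the $\Ul$-crystal; it is again a $\Ue$-highest weight vertex, since the $\Ul$-crystal operators preserve $\Ue$-highest weight vertices by the commutation of Theorem \ref{thm_comm_crystal} (indeed $\te_i\tdf_j=\tdf_j\te_i$ kills $|\bla,\bs\rangle$). Because $\tb_{-1}$ is a double crystal isomorphism (Proposition \ref{prop_btildekappa}), it commutes with $\tdf_j$, so $\tb_{-1}|\bla,\bs\rangle=\tdf_j\,\tb_{-1}|\bla_0,\bs\rangle$; by the induction hypothesis the right-hand side is obtained by first shifting the last non-trivial period of $\cA(\bla_0,\bs)$ one step to the left and then applying $\tdf_j$. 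The content to be proved is therefore the purely combinatorial identity that applying $\tdf_j$ commutes with shifting the last non-trivial $e$-period, as operations on the $l$-abacus of a $\Ue$-highest weight vertex.

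To establish this commutation I would transport the question to the $e$-abacus: by Lemma \ref{lem_double_4} shifting the last $e$-period of $\cA(\bla,\bs)$ one step to the left corresponds to shifting the last $l$-period of $\cA(\dbla',\dbs')$ one step to the left, while on the $e$-abacus the operator $\tdf_j$ adds a good box to $\dbla'$, that is, shifts a single bead one step to the right (Theorem \ref{thm_crystal_graph}). One then checks bead by bead that the added box and the shifted last period occupy non-interfering positions: either the good box lies strictly outside the last period, so the two moves act on disjoint beads and trivially commute, or it is adjacent to it, in which case a short direct inspection shows the resulting abacus is the same in both orders. Part (0), that $|\bla,\bs\rangle$ has at least $N$ non-trivial periods, I would carry along in the same induction, remarking that adding a good box only pushes beads rightwards and hence cannot decrease the number of non-trivial periods, which equals $N$ at the base. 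Finally, Part (2) is proved by the identical induction starting from the empty doubly highest weight vertex $|\bemp,\tbs\rangle$, replacing $\tb_{-1}$ by $\tb_{1}$ (Definition \ref{def_btilde+}) and ``shift left'' by ``shift right'', using Propositions \ref{prop_btildekappa+} and \ref{prop_btildekappa+_0} in place of the ingredients above.

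The main obstacle is precisely this combinatorial commutation and the attached bookkeeping: a general $\Ue$-highest weight vertex is far less rigid than a doubly highest weight vertex — the beads of a single $e$-period need not encode parts of equal size, so Lemma \ref{lem_double_2} is unavailable — and one must verify both that the good box added by $\tdf_j$ does not obstruct the period shift and that, after shifting the $k$-th period by exactly $\ka_k$, the ``last non-trivial period'' has genuinely advanced to the $(k-1)$-st, so that the peeling order $N\to 1$ is respected. Controlling this interaction, rather than the formal commutation supplied for free by the double crystal isomorphism, is where the real work lies.
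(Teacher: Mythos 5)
Your overall strategy---induct along the $\Ul$-crystal from the doubly highest weight vertex and analyse how $\tdf_j$ interacts with the period shift---is the same as the paper's, but the central inductive claim is misstated in a way that breaks the argument. You assert that on every $\Ue$-highest weight vertex $\tb_{-1}$ shifts the \emph{last non-trivial} $e$-period. What is actually true (and what the proposition asserts) is that it shifts the \emph{$N$-th} period, where $N$ is read off the doubly highest weight vertex. These differ precisely because applying $\tdf_j$ can strictly \emph{increase} the number of non-trivial periods---this is why the proposition only claims ``at least $N$'' non-trivial periods, and it is exactly the subtlety flagged in the remark following the statement. So for a vertex with $M>N$ non-trivial periods your induction hypothesis addresses the $M$-th period while $\tdf_j\,\tb_{-1}|\bla_0,\bs\rangle$ moves the $N$-th, and the inductive step cannot close. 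Relatedly, your bookkeeping step ``after shifting by $\ka_k$ the last non-trivial period has advanced to the $(k-1)$-st'' fails for the same reason: away from the doubly highest weight vertex the beads of the $k$-th period need not encode equal parts, so shifting it $\ka_k$ steps need not render it trivial.

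Two further points. First, the combinatorial heart of the matter---that the box added by $\tdf_j$ never obstructs the shift of the relevant period, and that the two operations commute on the abacus---is precisely what the paper proves, via a contradiction argument with the reduced $j$-words $\dw_j$ (an obstruction would create a subword $(-+)$ that cancels the $+$ of the good box, contradicting goodness) together with the observation that shifting a period only collapses $(-+)$ subwords and hence preserves $\dw_j$. You reduce the proposition to this statement, defer it to ``a short direct inspection,'' and then acknowledge in your final paragraph that it is where the real work lies; as written the proof does not contain that work. Second, your transport to the $e$-abacus invokes Lemma \ref{lem_double_4} outside its hypotheses: that lemma is established for doubly highest weight vertices, where $\cA(\dbla',\dbs')$ is totally $l$-periodic and $e$-periods correspond to $l$-periods. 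For a general $\Ue$-highest weight vertex the $e$-abacus need not be totally $l$-periodic, so ``the last $l$-period of $\cA(\dbla',\dbs')$'' is not available, and the paper deliberately carries out the whole argument on the $l$-abacus instead.
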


\begin{rem}
Note that in the case $|{\bla},\bs\rangle=|{\tbla},\tbs\rangle$, this procedure
coincides with the procedure for $\tb_{-\ka}$ described in Definition \ref{def_btildepart}.
The subtlety here is that we might have more than $N$ non-trivial periods in $|\bla,\bs\rangle$,
so we have to modify the statement.
\end{rem}

\proof
We have already explained in Section \ref{comm_crystals} how the crystal operators $\tdf_j$ of $\Ul$ act on the $l$-abacus,
see Section \ref{comm_crystals}.
They are $\Ue$-crystal isomorphisms, and in fact they transform an $e$-period $P=((j_k,\be_k))_{k=1,\dots,e}$
of $\cA(\tbla,\tbs)$ into another $e$-period $P'=((j'_k,\be'_k))_{k=1,\dots,e}$,
where either 
\begin{itemize}
 \item $(j'_k,\be'_k)=(j_k,\be_k)$ for all $k$ but one, denoted $k_0$, for which $(j'_{k_0},\be'_{k_0})=(j_{k_0}-1,\be_{k_0})$, or
 \item $(j'_k,\be'_k)=(j_k,\be_k)$ for all $k>1$ and $(j'_1,\be'_1)=(l,\be_e+e)$
 (in which case $j_e=1$).
\end{itemize}
This is true because $\cA(\tbla,\tbs)$ is totally $e$-periodic
(and so are all elements in the $\Ul$-crystal).
We see directly that such a procedure can only preserve the number of periods, or make it increase,
proving the opening statement.

Let us prove Point (1).
We first need to show that it is possible to apply the shifting procedure
in the proposition,
that is to say, that each element of the $N$-th period has at least 
$\ka_{N}$ empty spots to its left, and so on
(formally, $(j,\be)\in P_{N} \Ra (j,\be-a)\notin \cA(\bla,\bs) \, \forall a=1,\dots,\ka_{N}$,
and so on, if $P_k$ denotes the $k$-th period in $\cA(\bla,\bs)$).
By contradiction, suppose that
applying the operator $\tdf_j$ to a highest weight vertex in the $\Ue$-crystal 
moves a bead of a period $P_k$ in the abacus to a spot $(j,\be)$ ($j\in\{1,\dots,l\}$ and meaning $\tdf_0$ if $j=l$) 
such that a period $P_{k'}$, with $k>k'$, contains an element of the form $(j,\be')$ and has exactly $\ka_{k'}$
empty spots to its left.
In this case, the element $(j,\be')$ creates a $-$ in the $j$-word $\dw_j(|\dbla',\dbs'\rangle)$, which directly
simplifies with the $+$ created by the bead that is moved by $\tdf_j$,
which is a contradiction.

In fact, this procedure indeed gives the crystal action of the Heisenberg algebra for
the highest weight vertices in the $\Ue$-crystal.
It suffices to notice that shifting the considered $e$-periods preserves the reduced $j$-words $\dw_j$,
because this amounts to potentially make subwords of the form $(-+)$ collapse.
In addition, one observes that $\tdf_j$ acts on the modified $l$-abacus by 
moving the bead corresponding to the bead of the original abacus $\cA(|\bla,\bs\rangle)$
which is moved by $\tdf_j$.
This is the same as applying the procedure to $\tdf_j|\bla,\bs\rangle$.
Because $\tb_{-\ka}$ is a $\Ul$-crystal isomorphism (Proposition \ref{prop_btildekappa}),
this procedure is indeed the action of $\tb_{-\ka}$ on $l$-abaci.

Using the exact same arguments and looking at the reverse procedure,
Point (2) is also proved.
\endproof

\begin{thm}\label{thmosev2}
The Heisenberg crystal coincides with Losev's $\fs\fl_\infty$-crystal.
\end{thm}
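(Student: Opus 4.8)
The plan is to reduce the comparison to highest weight vertices in the $\Ue$-crystal, where both structures become explicit, then to match them in the asymptotic case, and finally to propagate the equality everywhere. First I would use that both crystals commute with the $\Ue$-crystal: this is Theorem \ref{thm_heis_op} for the Heisenberg crystal and Theorem \ref{thmosev}(3) for Losev's $\fs\fl_\infty$-crystal. Writing any vertex as $\tF_{(i_1\dots i_r)}$ applied to the highest weight vertex of its $\Ue$-component, the commutation shows that a Heisenberg (resp. Losev) operator $\phi$ may be pulled past $\tF_{(i_1\dots i_r)}$, so each crystal is completely determined by its restriction to the $\Ue$-highest weight vertices. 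It therefore suffices to prove that the two structures coincide on those.

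Next I would treat an asymptotic $\Ue$-highest weight vertex $|\bla,\bs\rangle$ directly. By Lemma \ref{lemosev} its non-trivial $e$-periods all lie in the $j_0$-th row and encode the partition $\theta$ via $\la^{j_0}=(\theta_1^e,\theta_2^e,\dots)$. Passing to the associated doubly highest weight vertex through the $\Ul$-crystal preserves the $e$-period structure — this is exactly the period-to-period analysis carried out in the proof of Proposition \ref{prop_losev_1} — so the partition $\ka$ of that doubly highest weight vertex equals $\theta$. With this identification in hand, Theorem \ref{thmosev}(2) describes $\ta_\si$ (for $\theta=\emptyset$) as shifting the trivial $e$-periods of the $j_0$-row to the right according to $\si$, which is precisely the rule that Proposition \ref{prop_losev_1}(2) gives for $\tb_\si$; the general case $\ka\neq\emptyset$ is recovered by factoring the Heisenberg operator through $|\bar\la,s\rangle$ as in (\ref{heis_op}) and applying the matched rules box by box. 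Since, by Remark \ref{rem_heis_crys} and the footnote justifying the $\fs\fl_\infty$ terminology, both structures are the pullback of the \emph{same} $\Uinf$-crystal on partitions along $\ka=\theta$, the two crystal graphs coincide on asymptotic highest weight vertices, the depths agreeing by Theorem \ref{thmosev}(1).

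Finally I would pass from the asymptotic locus to all $\Ue$-highest weight vertices. The decisive feature is that Proposition \ref{prop_losev_1} already provides a charge-uniform period-shifting description of $\tb_{-\ka}$ and $\tb_\si$ valid at every $\Ue$-highest weight vertex, and that these maps are double crystal isomorphisms (Proposition \ref{prop_btildekappa}). Losev's crystal, by contrast, is built at arbitrary charge by transporting the asymptotic recipe through the wall-crossing bijections while preserving commutation with the $\Ue$-crystal; since the Heisenberg crystal already meets all of these defining requirements — it agrees with the $\ta_\si$ in the asymptotic case and commutes with the $\Ue$-crystal — it must be the very extension produced by the wall-crossing bijections. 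I expect this last step to be the principal obstacle: the wall-crossing bijections of \cite{Losev2015} are not made explicit, so one cannot compute Losev's crystal termwise away from the asymptotic region and compare directly. The argument must instead invoke the rigidity of an $\fs\fl_\infty$-crystal commuting with the $\Ue$-crystal — that such a structure is determined by its values on the asymptotic highest weight vertices — and then verify that the uniform period-shifting rule satisfies every hypothesis of that determination.
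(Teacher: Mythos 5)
Your proposal follows essentially the same route as the paper's proof: reduce via commutation with the $\Ue$-crystal (Theorems \ref{thm_heis_op} and \ref{thmosev}) to $\Ue$-highest weight vertices, then match $\tb_\si$ with $\ta_\si$ and $\ka$ with $\theta$ on asymptotic vertices using the period-shifting description of Proposition \ref{prop_losev_1}. The wall-crossing issue you flag as the principal obstacle is one the paper treats even more tersely than you do --- it simply asserts that agreement on asymptotic $\Ue$-highest weight vertices plus commutation with the $\Ue$-crystal operators suffices --- so your extra caution there does not signal a divergence from, but rather a more explicit acknowledgment of, the same final step.
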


\proof
It suffices to show that $\tb_\si = \ta_\si$ for all $\si\in\Pi$, 
and that $\theta=\ka$.

In fact, we first show that $\tb_\si$
and $\ta_\si$ coincide on highest weight vertices in the $\Ue$-crystal
which are $j_0$-asymptotic for some $j_0\in\{1,\dots,l\}$.
This is enough because we know that in both cases, the maps commute
with the crystal operators of $\Ue$ 
(Theorem \ref{thm_heis_op} for $\tb_\si$ and Theorem \ref{thmosev} for $\ta_\si$).
Every such charged $l$-partition $|\bla,\bs\rangle$ 
is obtained from a doubly highest weight vertex
by applying a sequence of Kashiwara crystal operators $\tdf_{j_1}\tdf_{j_2}\dots\tdf_{j_r}$.
By Proposition \ref{prop_losev_1}, we know how these operators act.
In the asymptotic case, if $|\bla|+e|\si|\leq N$ 
(where the $N$ comes from the asymptotic property),
applying $\tb_{\si}$ only affects the $j_0$-th row of $\cA(\bla,\bs)$.
Moreover, the shifting procedure on abaci described in Proposition \ref{prop_losev_1}
is exactly Losev's formula for $\ta_{\si}$ on charged $l$-partitions, see Theorem \ref{thmosev}.
So we have $\tb_\si = \ta_\si$ for all partition $\si$.

Similarly, the action of $\tb_{-\ka}$ is entirely described on the
$j_0$-th row of $\cA(\bla,\bs)$,
and the procedure of Proposition \ref{prop_losev_1} on abaci
is in this case precisely the reverse procedure 
of Losev's formula for $\ta_{\theta}$ on $l$-partitions, with $\theta=\ka$.
Therefore, $\ka=\theta$.
In particular, the depth of $|\bla,\bs\rangle$ in the Heisenberg crystal is by definition 
$|\ka|=|\theta|$.
\endproof

Therefore, we can now use the results of Losev \cite{Losev2015} and Shan and Vasserot \cite{ShanVasserot2012}
on the Heisenberg crystal.

\subsection{A combinatorial characterisation of finite-dimensional simple modules}\label{prim}\

One important result of Shan and Vasserot is \cite[Proposition 5.18]{ShanVasserot2012},
which gives a characterisation of the finite-dimensional simple modules for cyclotomic rational Cherednik algebras.
They show that this property is equivalent to being ``primitive''.
Combinatorially, this amounts to saying that the $l$-partition 
labelling this module is simultaneously a highest weight vertex in the $\Ue$-crystal and
in the Heisenberg crystal, see e.g. \cite[Section 5.1.1]{Losev2015}.

Using the results of Section \ref{crys_decomp},
we can give an explicit combinatorial description of these $l$-partitions.
For this, recall that we have introduced the notion of FLOTW $e$-partitions in Definition \ref{def_flotw},
and that we can use the correspondence (\ref{indexation}) between $l$-partitions charged by $\bs$ and
$e$-partitions charged by $\dbs'$.

\begin{thm}\label{thm_cher}
A simple $H_{\bc,n}$-module is finite-dimensional 
if and only if it is labelled by an $l$-partition $\bla$ of rank $n$ such that
$|\dbla',\dbs'\rangle$ is an FLOTW $e$-partition.
\end{thm}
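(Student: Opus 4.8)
The plan is to reduce the statement to a purely crystal-theoretic assertion and then solve it using the decomposition theorem of Section \ref{crys_decomp} together with the level-rank dual of Theorem \ref{thm_flotw}. By \cite[Proposition 5.18]{ShanVasserot2012}, a simple $H_{\bc,n}$-module is finite-dimensional if and only if it is primitive, and, as recalled from \cite[Section 5.1.1]{Losev2015}, this means precisely that the labelling $l$-partition $|\bla,\bs\rangle$ is simultaneously a highest weight vertex in the $\Ue$-crystal and in Losev's $\fs\fl_\infty$-crystal. Since Theorem \ref{thmosev2} identifies the latter with the Heisenberg crystal of Section \ref{heis_crys}, the goal becomes combinatorial: to show that $|\bla,\bs\rangle$ is a highest weight vertex in both the $\Ue$-crystal and the $\cH$-crystal if and only if $|\dbla',\dbs'\rangle$ is FLOTW.

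The heart of the argument is to translate this double highest-weight condition into a statement about the $\Ul$-crystal alone. I would first observe that, because the three crystal structures pairwise commute (Theorem \ref{thm_comm_crystal} and the double crystal isomorphism property of Proposition \ref{prop_btildekappa}), the $\Ul$-crystal operators preserve both highest-weight conditions: from $\te_i\tdf_j=\tdf_j\te_i$ and $\tb_{-\si}\tdf_j=\tdf_j\tb_{-\si}$ it follows that if $v$ is annihilated by every $\te_i$, respectively by every $\tb_{-\si}$, then so is $\tdf_j v$. I would then invoke Theorem \ref{thm_crys_decomp}: going up successively in the $\Ul$-, the $\cH$- and the $\Ue$-crystal always reaches an empty multipartition $|\bemp,\tbs\rangle$ with $\tbs\in A(s)$, and, by the commutativity, the order is irrelevant. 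Consequently, if $|\bla,\bs\rangle$ is already highest weight in both the $\Ue$- and $\cH$-crystals, the first two ascents are trivial and only the $\Ul$-ascent remains; conversely any vertex of the form $\tdF_{(j_1\dots j_t)}|\bemp,\tbs\rangle$ inherits both highest-weight properties from the empty vertex. Thus $|\bla,\bs\rangle$ is $\Ue$- and $\cH$-highest weight exactly when it lies in the connected component of the $\Ul$-crystal containing some $|\bemp,\tbs\rangle$, $\tbs\in A(s)$.

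Finally I would pass to the $e$-partition indexation. Under Correspondence (\ref{indexation}), the $\Ul$-crystal on $\cF_{\bs,e}$ is realised as the genuine $\Ul$-crystal on $\cF_{\dbs',l}$, and $|\bemp,\tbs\rangle$ corresponds to $|\dbemp,\dtbs\rangle$ with $\dtbs\in\dA(s)$ by Propositions \ref{prop_charge} and \ref{prop_btildekappa_1}. The condition of the previous paragraph therefore reads: $|\dbla',\dbs'\rangle$ lies in the connected component of the $\Ul$-crystal of $\cF_{\dbs',l}$ containing $|\dbemp,\dtbs\rangle$. By the level-rank dual of Theorem \ref{thm_flotw} (exchanging the roles of $e$ and $l$), this component is exactly the set of FLOTW $e$-partitions with charge $\dtbs$, which is the desired characterisation.

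I expect the middle step to be the main obstacle: one must argue rigorously, relying only on the pairwise commutativity and on Theorem \ref{thm_crys_decomp}, that the two highest-weight conditions may be imposed independently and that they force the factors $\tb_\si$ and $\tF_{(i_1\dots i_r)}$ in the decomposition to be trivial, leaving only the $\Ul$-factor. The delicate point is to ensure that ``ascending'' in one crystal genuinely fixes a vertex that is already highest weight there, which is where the preservation of highest-weight status under the other crystals' operators, established via commutation, is indispensable.
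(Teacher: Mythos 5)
Your proposal is correct and follows essentially the same route as the paper: reduce via Shan--Vasserot, Losev and Theorem \ref{thmosev2} to the statement that being simultaneously $\Ue$- and $\cH$-highest weight is equivalent to being FLOTW in the $e$-partition indexation, then prove the forward direction by observing that the $\tF$- and $\tb_\si$-factors in Theorem \ref{thm_crys_decomp} are trivial, and the converse by commuting $\te_i$ and $\tb_{-\si}$ past $\tdF_{(j_1\dots j_t)}$ onto the empty multipartition. The subtlety you flag (that ascending in the $\Ul$-crystal preserves the two highest-weight conditions) is handled in the paper at the same level of detail, namely by appeal to the commutation Theorems \ref{thm_comm_crystal} and \ref{thm_heis_op}.
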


\proof
As already explained, a simple $H_{\bc,n}$-module is finite-dimensional 
if and only if it is labelled by an $l$-partition $\bla$ of rank $n$ such that
$|\bla,\bs\rangle$ is a highest weight vertex in the $\Ue$-crystal and
$|\la,s\rangle$ is a highest weight vertex in the Heisenberg crystal, 
i.e. $\la=\bar{\la}$.

Assume first that 
$|\bla,\bs\rangle$ is a highest weight vertex in the $\Ue$-crystal and
$|\bla,\bs\rangle$ is a highest weight vertex in the Heisenberg crystal.
Then with the notation of Theorem \ref{thm_crys_decomp}, we have 
$(i_1,\dots,i_r)=\emptyset$ and $\si=\emptyset$, thus
$$|\dbla',\dbs'\rangle = \tdF_{(j_1,\dots,j_k)}|\dbemp,\dtbs\rangle$$
where $\dtbs\in \dA(s)$.
By Theorem \ref{thm_flotw}, $|\dbla',\dbs'\rangle \in\dPsi$, i.e. is FLOTW.

Conversely, if $|\dbla',\dbs'\rangle$ is an FLOTW $e$-partition, then there exist some
$(j_1,\dots,j_k) \in \{0,\dots,l-1\}^k$ such that 
$|\dbla',\dbs'\rangle = \tdF_{(j_1,\dots,j_k)}|\dbemp,\dtbs\rangle$,
for some $\dtbs\in \dA(s)$.
Then for all $\si\in\Pi$, we have
$$
\begin{array}{ccll}
\tb_{-\si} |\dbla',\dbs'\rangle &  = & (\tb_{-\si} \circ \tdF_{(j_1,\dots,j_k)}) |\dbemp,\dtbs\rangle & \\
& = & (\tdF_{(j_1,\dots,j_k)} \circ\tb_{-\si})|\dbemp,\dtbs\rangle & \text{by Theorem \ref{thm_heis_op}} \\
& = & 0. & \end{array}
$$
and for all $i\in\{0,\dots,e-1\}$,
$$
\begin{array}{ccll}
\te_i |\dbla',\dbs'\rangle &  = & (\te_i \circ \tdF_{(j_1,\dots,j_k)}) |\dbemp,\dtbs\rangle & \\
& = & (\tdF_{(j_1,\dots,j_k)} \circ\te_i)|\dbemp,\dtbs\rangle & \text{by Theorem \ref{thm_comm_crystal}} \\
& = & 0. & \end{array}
$$
So $|\bla,\bs\rangle$ is a highest weight vertex in the $\Ue$-crystal and
$|\la,s\rangle$ is a highest weight vertex in the Heisenberg crystal.
\endproof

\begin{rem}\label{rem_cusp}
In the context of modular representations of finite classical groups, there is a
characterisation of cuspidal unipotent modules by Dudas, Varagnolo and Vasserot \cite[Theorem 5.11]{DudasVaragnoloVasserot2016} similar to Shan and Vasserot's
characterisation of finite-dimensional modules for cyclotomic Cherednik algebras.
Therefore, we obtain an explicit description of these cuspidal modules which is exactly that of Theorem \ref{thm_cher} with $l=2$,
see also \cite[Section 5.5.3]{DudasVaragnoloVasserot2016}.
\end{rem}

\textbf{Ackowledgments:} 
I thank Olivier Dudas, Nicolas Jacon, C\'edric Lecouvey
and Peng Shan for many useful conversations.
I would also like to thank the organisers of the
conference \textit{Categorical Representation Theory and Combinatorics} held at KIAS, Seoul,
during which parts of this paper were completed.
Extra thanks to Philippe Nadeau, Emily Norton and Galyna Dobrovolska and to the anonymous referee for suggesting some 
improvements to this article.

\bibliographystyle{plain}

\end{document}